\numberwithin{equation}{section}
\newtheoremstyle{thmlemcorr}{10pt}{10pt}{\itshape}{}{\bfseries}{.}{10pt}{{\thmname{#1}\thmnumber{ #2}\thmnote{ (#3)}}}
\newtheoremstyle{thmlemcorr*}{10pt}{10pt}{\itshape}{}{\bfseries}{.}\newline{{\thmname{#1}\thmnumber{ #2}\thmnote{ (#3)}}}
\newtheoremstyle{defi}{10pt}{10pt}{\itshape}{}{\bfseries}{.}{10pt}{{\thmname{#1}\thmnumber{ #2}\thmnote{ (#3)}}}
\newtheoremstyle{remexample}{10pt}{10pt}{}{}{\bfseries}{.}{10pt}{{\thmname{#1}\thmnumber{ #2}\thmnote{ (#3)}}}
\newtheoremstyle{ass}{10pt}{10pt}{}{}{\bfseries}{.}{10pt}{{\thmname{#1}\thmnumber{ A#2}\thmnote{ (#3)}}}
\theoremstyle{thmlemcorr}
\newtheorem{theorem}{Theorem}
\numberwithin{theorem}{section}
\newtheorem{lemma}[theorem]{Lemma}
\newtheorem{corollary}[theorem]{Corollary}
\newtheorem{proposition}[theorem]{Proposition}
\theoremstyle{thmlemcorr*}
\newtheorem{theorem*}{Theorem}
\newtheorem{lemma*}[theorem]{Lemma}
\newtheorem{corollary*}[theorem]{Corollary}
\newtheorem{proposition*}[theorem]{Proposition}
\newtheorem{problem*}[theorem]{Problem}
\newtheorem{conjecture*}[theorem]{Conjecture}
\theoremstyle{defi}
\newtheorem{definition}[theorem]{Definition}
\theoremstyle{remexample}
\newtheorem{remark}[theorem]{Remark}
\theoremstyle{ass}
\newcommand{\Crm}{\mathrm{C}}
\newcommand{\Lrm}{\mathrm{L}}
\newcommand{\Wrm}{\mathrm{W}}
\newcommand{\Gcal}{\mathcal{G}}
\newcommand{\Hcal}{\mathcal{H}}
\newcommand{\Lcal}{\mathcal{L}}
\newcommand{\Rcal}{\mathcal{R}}
\newcommand{\Scal}{\mathcal{S}}
\newcommand{\Ebf}{\mathbf{E}}
\newcommand{\Mbf}{\mathbf{M}}
\newcommand{\Ybf}{\mathbf{Y}}
\renewcommand{\Bbb}{\mathbb{B}}
\newcommand{\Sbb}{\mathbb{S}}
\DeclareMathOperator{\id}{id}
\DeclareMathOperator*{\wslim}{w*-lim}
\DeclareMathOperator{\supmod}{sup}
\DeclareMathOperator{\rank}{rank}
\DeclareMathOperator{\supp}{supp}
\newcommand{\setn}[2]{\{\, #1 \ \ \textup{\textbf{:}}\ \ #2 \,\}}
\newcommand{\setb}[2]{\bigl\{\, #1 \ \ \textup{\textbf{:}}\ \ #2 \,\bigr\}}
\newcommand{\setBB}[2]{\biggl\{\, #1 \ \ \textup{\textbf{:}}\ \ #2 \,\biggr\}}
\newcommand{\norm}[1]{\|#1\|}
\newcommand{\normn}[1]{\|#1\|}
\newcommand{\normb}[1]{\bigl\|#1\bigr\|}
\newcommand{\abs}[1]{|#1|}
\newcommand{\absn}[1]{|#1|}
\newcommand{\absb}[1]{\bigl|#1\bigr|}
\newcommand{\absB}[1]{\Bigl|#1\Bigr|}
\newcommand{\absBB}[1]{\biggl|#1\biggr|}
\newcommand{\dpr}[1]{\langle #1 \rangle}	
\newcommand{\dprn}[1]{\langle #1 \rangle}
\newcommand{\dprb}[1]{\bigl\langle #1 \bigr\rangle}
\newcommand{\ddpr}[1]{\langle\!\langle #1 \rangle\!\rangle}
\newcommand{\ddprn}[1]{\langle\!\langle #1 \rangle\!\rangle}
\newcommand{\ddprb}[1]{\bigl\langle\hspace{-2.5pt}\bigl\langle #1 \bigr\rangle\hspace{-2.5pt}\bigr\rangle}
\newcommand{\ddprB}[1]{\Bigl\langle\!\!\Bigl\langle #1 \Bigr\rangle\!\!\Bigr\rangle}
\newcommand{\cl}[1]{\overline{#1}}
\newcommand{\di}{\mathrm{d}}
\newcommand{\dd}{\;\mathrm{d}}
\newcommand{\N}{\mathbb{N}}
\newcommand{\R}{\mathbb{R}}
\newcommand{\loc}{\mathrm{loc}}
\newcommand{\reg}{\mathrm{reg}}
\newcommand{\sing}{\mathrm{sing}}
\newcommand{\ONE}{\mathbbm{1}}
\newcommand{\toweakstar}{\overset{*}\rightharpoondown}
\newcommand{\todown}{\downarrow}
\newcommand{\SmallO}{\mathrm{\textup{o}}}
\newcommand{\sbullet}{\begin{picture}(1,1)(-0.5,-2)\circle*{2}\end{picture}}
\newcommand{\frarg}{\,\sbullet\,}
\newcommand{\BV}{\mathrm{BV}}
\newcommand{\GY}{\mathbf{GY}}
\newcommand{\toY}{\overset{\Ybf}{\to}}
\newcommand{\eps}{\epsilon}
\DeclareMathOperator{\Tan}{Tan}
\newcommand{\term}[1]{\textbf{#1}}
\newcommand{\proofstep}[1]{\textit{#1}}
\def\Xint#1{\mathchoice 
{\XXint\displaystyle\textstyle{#1}}% 
{\XXint\textstyle\scriptstyle{#1}}% 
{\XXint\scriptstyle\scriptscriptstyle{#1}}% 
{\XXint\scriptscriptstyle\scriptscriptstyle{#1}}% 
\!\int} 
\def\XXint#1#2#3{{\setbox0=\hbox{$#1{#2#3}{\int}$} 
\vcenter{\hbox{$#2#3$}}\kern-.5\wd0}} 
\def\dashint{\,\Xint-}
\newcommand{\restrict}{\begin{picture}(10,8)\put(2,0){\line(0,1){7}}\put(1.8,0){\line(1,0){7}}\end{picture}}
\renewcommand{\eps}{\varepsilon}
\renewcommand{\phi}{\varphi}
\title[Young measure characterization in BV]{A local proof for the characterization of Young measures generated by sequences in BV}
\author{Filip Rindler}
\address{Mathematics Institute, University of Warwick, Coventry CV4 7AL, United Kingdom, and, United Kingdom.}
\email{F.Rindler@warwick.ac.uk}
\begin{document}

%\hrule\vspace{1pt}
%\begin{center}
%\textbf{\large
%DRAFT (version of \today)}
%\end{center}
%\hrule
%\vspace{10mm}

\begin{abstract}
This work presents a new proof of the recent characterization theorem for generalized Young measures generated by sequences in BV by Kristensen and the author [\textit{Arch.\ Ration.\ Mech.\ Anal.\ 197 (2010), 539--598}]. The present argument is based on a localization technique together with a local Hahn--Banach argument in novel function spaces combined with an application of Alberti's Rank-One Theorem. This strategy avoids employing a relaxation theorem as in the previously known proof, and the new tools introduced in its course should prove useful in other contexts as well. In particular, we introduce \enquote{homogeneous} Young measures, separately at regular and singular points, which exhibit rather different behaviour than the classical homogeneous Young measures. As an application, we show how for BV-Young measures with an \enquote{atomic} part one can find a generating sequence respecting this structure.

\vspace{4pt}

\noindent\textsc{MSC (2010): 49J45 (primary); 28B05, 46G10} 

\noindent\textsc{Keywords:} Young measure characterization, BV, quasiconvexity.

\vspace{4pt}

\noindent\textsc{Date:} \today{} (version 3.0).
\end{abstract}

\hypersetup{
  pdfauthor = {Filip Rindler (University of Warwick)},
  pdftitle = {A local proof for the characterization of Young measures generated by sequences in BV},
  pdfsubject = {MSC (2010): 49J45 (primary); 28B05, 46G10},
  pdfkeywords = {Young measure characterization, BV, quasiconvexity}
}

\maketitle

%\setcounter{tocdepth}{1} 
%\tableofcontents

\section{Introduction}

The characterization of a class of Young measures generated by constrained sequences is a recurring problem in Young measure theory with many applications in the analysis of nonlinear PDEs and the Calculus of Variations. The first major results in this field are due to Kinderlehrer \& Pedregal~\cite{KinPed91CYMG,KinPed94GYMG,Pedr97PMVP} and concern sequences of gradients bounded in $\Wrm^{1,p}$ with $1 < p \leq \infty$ (and also for $p = 1$ if one additionally assumes equiintegrability). Their characterization puts the generated \enquote{gradient} Young measures in duality with quasiconvex functions with $p$-growth (quasiconvex functions were introduced by Morrey~\cite{Morr52QSMI}, for a modern introduction see Dacorogna's book~\cite{Daco08DMCV}). Some ideas also go back to Tartar's lecture notes~\cite{Tart79CCAP} and the investigations into microstructure by Ball \& James~\cite{BalJam87FPMM}. For further related results see~\cite{KruRou96ECLP,Kris99LSSW,Sych00CHGY,FonKru10OCGA,SzeWie12YMGI}.

Part of the interest in Young measure characterization theorems in the spirit of Kinderlehrer \& Pedregal stems from the fact that they reveal a duality to the (generalized) convexity class relevant in the corresponding minimization problems. Indeed, the \enquote{necessity} direction of such a characterization result is equivalent to (sequential) weak lower semicontinuity of integral functionals with integrands in the corresponding convexity class. On the other hand, the \enquote{sufficiency} direction is interesting for instance in relaxation theory, cf.~\cite{Pedr97PMVP}: If a given variational problem has no classical solution, one may extend the class of admissible minimizers to include Young measures, which are to be interpreted as fine mixtures of functions. The characterization result then implies constraints on the class of relaxed minimizers to be considered. In the final Section~\ref{sc:splitting} we also give another interesting new application to the splitting of generating sequences for generalized gradient Young measure with an \enquote{atomic} part.

This work considers Young measures related to the space BV of functions of bounded variation (see the following section for definitions). Owing to the possible presence of concentration effects, sequences from these spaces are not amenable to a treatment with classical Young measures. A remedy in the form of generalized Young measures was introduced by DiPerna and Majda~\cite{DiPMaj87OCWS} in the context of fluid dynamics. We here follow the Young measure framework of~\cite{KriRin10CGGY,Rind11LSIF,AliBou97NUIG}; for a recent overview and historical remarks see~\cite{Rind11PhD}.

The first characterization theorem for Young measures generated by sequences in BV was presented in~\cite{KriRin10CGGY} and its proof was based on the BV-lower semicontinuity theorem in~\cite{KriRin10RSIF}, which was established without Young measure theory. The lower semicontinuity or \enquote{necessity} part of that characterization theorem was later also proved directly and without the use of Alberti's Rank One Theorem~\cite{Albe93ROPD} in~\cite{Rind12LSYM}.

The aim of this work is two-fold: First, a cleaner proof of the \enquote{sufficiency} part of the BV-Young measure characterization theorem is presented. In particular, the argument in this work is self-contained within the framework of generalized Young measures and does not rely on a relaxation theorem such as~\cite{KriRin10RSIF}. Moreover, it is conceptually much simpler and more satisfying than the previous proof. In the course of the argument, we also introduce the new concept of homogeneous generalized Young measures, which exhibits a number of differences to the concept of classical homogeneous Young measures, as well as novel spaces of \enquote{local} Young measures (separately for regular and singular points), which should prove to be useful in related problems as well. Second, we show how the characterization theorem in BV can be used to prove a natural conjecture about gradient Young measures with an atomic part. 

In all of the following, $\Omega$ is an open bounded Lipschitz domain. The main result in BV is (see the following section for notation):

\begin{theorem} \label{thm:GYM_characterization}
Let $\nu \in \Ybf(\Omega;\R^{m \times d})$ be a (generalized) Young measure with $\lambda_\nu(\partial \Omega) = 0$. Then $\nu$ is a gradient Young measure, $\nu \in \GY(\Omega;\R^m)$, if and only if there exists $u \in \BV(\Omega;\R^m)$ with
\[
  [\nu] := \dpr{\id,\nu_x} \, \Lcal_x^d \restrict \Omega + \dpr{\id,\nu_x^\infty} \, \lambda_\nu(\di x) = Du,
\]
and for all quasiconvex $h \in \Crm(\R^{m \times d})$ with linear growth at infinity the following two Jensen-type inequalities hold:
\[
  \text{(i)}\qquad  h \biggl( \dprb{\id,\nu_x} + \dprb{\id,\nu_x^\infty} \frac{\di \lambda_\nu}{\di \Lcal^d}(x) \biggr)
    \leq \dprb{h,\nu_x} + \dprb{h^\#,\nu_x^\infty} \frac{\di \lambda_\nu}{\di \Lcal^d}(x)
\]
for $\Lcal^d$-almost every $x \in \Omega$, and
\[
  \text{(ii)}\qquad  h^\# \bigl( \dprb{\id,\nu_x^\infty} \bigr) \leq \dprb{h^\#,\nu_x^\infty}
\]
for $\lambda_\nu^s$-almost every $x \in \Omega$.
\end{theorem}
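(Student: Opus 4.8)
The plan is to prove the two implications separately. The \enquote{necessity} direction is essentially known, so the substance is the \enquote{sufficiency} direction, where a generating sequence is built by a localization argument combined with a rigidity statement, rather than by invoking a relaxation theorem.

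\emph{Necessity.} If $\nu$ is generated by $(Du_j)$ with $(u_j)$ bounded in $\BV(\Omega;\R^m)$, then after passing to a subsequence $u_j \to u$ in $\Lrm^1(\Omega;\R^m)$ for some $u \in \BV(\Omega;\R^m)$, so $Du_j \toweakstar Du$ and $[\nu] = Du$. For quasiconvex $h$ of linear growth one applies the Young-measure representation of $\int_\Omega h(\nabla u_j)\,\varphi\,\di x$ for nonnegative $\varphi \in \Crm_c(\Omega)$ together with the BV-lower semicontinuity theorem for quasiconvex integrands of linear growth (Ambrosio--Dal Maso, Fonseca--M\"uller; or, bypassing Alberti's rank-one theorem, the direct argument of Rindler); differentiating the resulting inequality between measures with respect to $\Lcal^d$ and to $\lambda_\nu^s$ gives (i) and (ii) respectively.

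\emph{Sufficiency.} Assume $\nu \in \Ybf(\Omega;\R^{m\times d})$ satisfies $[\nu]=Du$, the inequalities (i), (ii), and $\lambda_\nu(\partial\Omega)=0$. The first step is localization: by the blow-up principle for generalized Young measures, for $\Lcal^d$-a.e.\ $x_0$ there is a homogeneous \emph{regular tangent Young measure} $\sigma^{x_0}$ with $\sigma_y \equiv \nu_{x_0}$, $\lambda_{\sigma} = \tfrac{\di\lambda_\nu}{\di\Lcal^d}(x_0)\,\Lcal^d$, $\sigma_y^\infty \equiv \nu_{x_0}^\infty$, constant barycenter $A := \dprb{\id,\nu_{x_0}} + \dprb{\id,\nu_{x_0}^\infty}\tfrac{\di\lambda_\nu}{\di\Lcal^d}(x_0)$, satisfying the homogeneous form of (i); and for $\lambda_\nu^s$-a.e.\ $x_0$ there is a homogeneous \emph{singular tangent Young measure} $\sigma^{x_0}$ with $\lambda_{\sigma}$ purely singular, $\dprb{\id,\sigma_y^\infty} \equiv P := \dprb{\id,\nu_{x_0}^\infty}$, satisfying the homogeneous form of (ii). A patching argument (partition $\Omega$ into small cubes, glue into each cube a generating sequence for the relevant tangent Young measure with matching affine boundary data, using $\lambda_\nu(\partial\Omega)=0$ to control cubes meeting $\partial\Omega$) shows that if every such tangent Young measure lies in $\GY$, then so does $\nu$. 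It thus suffices to treat the two homogeneous model cases.

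\emph{Homogeneous regular case.} Here $\sigma$ is homogeneous on the unit cube $Q$ with constant barycenter $A\in\R^{m\times d}$, so $[\sigma]$ is the gradient of the linear map $x\mapsto Ax$, and $h(A) \le \dprb{h,\sigma_0} + \dprb{h^\#,\sigma_0^\infty}\,\lambda$ holds for all quasiconvex $h$ of linear growth, where $\lambda := \lambda_\sigma(Q)$. The set $\Gcal_A$ of homogeneous gradient Young measures on $Q$ with barycenter $A$ is convex and weak-$*$ closed in the space of generalized Young measures; if $\sigma \notin \Gcal_A$, Hahn--Banach yields a continuous integrand $f$ of linear growth with $\dprb{f,\sigma_0} + \dprb{f^\infty,\sigma_0^\infty}\,\lambda < \inf\{\dots : \mu\in\Gcal_A\}$. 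The key point, which replaces the relaxation theorem, is that this infimum equals $(Qf)(A)$, the quasiconvex envelope evaluated at $A$: \enquote{$\ge$} is immediate since every $\mu\in\Gcal_A$ comes from gradients and $Qf$ is quasiconvex, while \enquote{$\le$} follows by realizing a minimizing sequence in $(Qf)(A)=\inf\{\tfrac1{\abs{Q}}\int_Q f(A+\nabla\psi)\,\di x : \psi\in\Wrm^{1,\infty}_0(Q;\R^m)\}$, extended $Q$-periodically and rescaled, as an element of $\Gcal_A$. Since $Qf\le f$, $Qf$ is quasiconvex of linear growth and $(Qf)^\#\le f^\infty$, the homogeneous (i) applied with $h=Qf$ gives $(Qf)(A) \le \dprb{Qf,\sigma_0}+\dprb{(Qf)^\#,\sigma_0^\infty}\,\lambda \le \dprb{f,\sigma_0}+\dprb{f^\infty,\sigma_0^\infty}\,\lambda$, a contradiction; hence $\sigma\in\Gcal_A$.

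\emph{Homogeneous singular case.} This is the main obstacle. One must first use rigidity: since $P$ arises from the singular part of a BV-gradient it is rank-one, $P=a\otimes b$ (Alberti's rank-one theorem; for the BD analogue of the theorem one needs the strengthened rigidity result proved in this paper), and, more strongly, the whole singular tangent Young measure $\sigma$ is \emph{one-directional} --- $\sigma_y$, $\sigma_y^\infty$ and $\lambda_\sigma$ are all controlled by the single direction $b$, so $[\sigma]$ is the gradient of a one-dimensional profile $v(x)=a\,\psi(b\cdot x)$. Granting this, one writes down an explicit generating sequence of such one-dimensional maps whose scalar derivative in direction $b$ is laminated to reproduce $\sigma_0$ and, under the appropriate blow-up scaling, $\sigma_0^\infty$; the homogeneous inequality (ii), $h^\#(P) \le \dprb{h^\#,\sigma_0^\infty}$, is exactly the condition guaranteeing that such a one-directional laminate has the correct averaged recession behaviour, so the maps generate $\sigma$. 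The genuinely hard part is establishing the rigidity --- that singular tangent Young measures are forced to be one-directional --- which requires a blow-up together with a differential-constraint argument (\enquote{$\Curl$-free}, respectively symmetric-gradient) at the level of the Young measure, and is precisely where the available rigidity results must be strengthened. Combining the three steps, every regular and every singular tangent Young measure of $\nu$ is a gradient Young measure, so the patching argument gives $\nu\in\GY(\Omega;\R^m)$.
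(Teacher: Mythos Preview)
Your overall architecture---necessity via lower semicontinuity, sufficiency via localization into regular and singular tangent Young measures, Hahn--Banach at regular points, then gluing---matches the paper. The regular case is essentially right in spirit, though you skip two technical points the paper handles explicitly: one must first check $Qf(A)>-\infty$ (otherwise $Qf$ is not quasiconvex and the inequality $(Qf)^\#\le f^\infty$ is meaningless), and one needs the $\epsilon|A|$ perturbation to obtain a uniformly $\Wrm^{1,1}$-bounded recovery sequence.

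The singular case, however, has a genuine gap. You write that after rigidity pins down $P=a\otimes\xi$ and the $\xi$-directionality of $\lambda_\sigma$, ``one writes down an explicit generating sequence of such one-dimensional maps whose scalar derivative in direction $b$ is laminated to reproduce $\sigma_0^\infty$''. This cannot work: any map of the form $x\mapsto a\,\psi(x\cdot\xi)$ has gradient everywhere proportional to $a\otimes\xi$, so such sequences can only generate concentration-angle measures supported on $\{\pm a\otimes\xi/|a\otimes\xi|\}$. But $\sigma_y^\infty=\nu_{x_0}^\infty$ is an arbitrary probability measure on the full sphere $\partial\Bbb^{m\times d}$; rigidity constrains only the \emph{barycenter} $[\sigma]$ and $\lambda_\sigma$, not the support of $\sigma_y^\infty$, and certainly not the generating sequence. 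The Jensen inequality (ii) is therefore not ``exactly the condition guaranteeing the laminate works''---it is a duality condition, and the paper uses it exactly as in the regular case: one runs the Hahn--Banach argument a second time, now in the space $\Ebf^\sing(\xi)$ of $\xi$-directional positively $1$-homogeneous integrands, after proving (with some work, via piecewise averaging and one-directional gluing) that $\GY_0^\sing(a\otimes\xi)$ is convex and its closure is weak*-closed. The quasiconvex envelope of the separating integrand is taken pointwise in the $\xi$-variable, and (ii) is applied to it.

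Two smaller points. You invoke Alberti's rank-one theorem to force $P$ rank-one; the paper deliberately avoids this and instead treats the three cases $\operatorname{rank}P=1$, $\operatorname{rank}P\ge2$, $P=0$ separately (in the last two, rigidity makes the underlying deformation affine and the argument collapses to the regular one). Finally, the gluing is more delicate than a cube partition: the paper uses a Morse covering by balls at regular points and rotated cubes at singular points, with careful trace estimates $\int_{\partial K}|v_j^{(n)}-u|\,d\Hcal^{d-1}=o(1)$ obtained from strict convergence of the blow-ups, so that the singular boundary contributions vanish in the limit.
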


Here, $h^\#$ is the generalized recession function as defined in~\eqref{eq:f_hash}.

We remark that by a recent result of Kirchheim and Kristensen~\cite{KirKri11ACR1} in conjunction with Alberti's Rank One Theorem~\cite{Albe93ROPD}, condition~(ii) is always satisfied, see Remark~\ref{rem:automatic_convexity} for details.

By Theorem~5.4 of~\cite{Rind12LSYM}, the \enquote{necessity} or lower semicontinuity part of the characterization result holds true; in this context note that due to the goal of avoiding Alberti's Rank One Theorem~\cite{Albe93ROPD}, the work~\cite{Rind12LSYM} established the classical lower semicontinuity theorem only under the additional assumption that the \emph{strong} recession function exists (see Remark~5.6 in \emph{loc.\ cit.} for an explanation why this is optimal), but the above Jensen-type inequalities were indeed shown for general quasiconvex functions with linear growth. It remains to show the \enquote{sufficiency} part of the previous theorem, i.e.\ we need to prove:

\begin{proposition} \label{prop:GYM_sufficient}
Let $\nu \in \Ybf(\Omega;\R^{m \times d})$ be a Young measure with $\lambda_\nu(\partial \Omega) = 0$ such that there exists $u \in \BV(\Omega;\R^m)$ with $[\nu] = Du$, and such that for all quasiconvex $h \in \Crm(\R^{m \times d})$ with linear growth at infinity the Jensen-type inequalities~(i) and~(ii) from the preceding theorem hold. Then, $\nu \in \GY(\Omega;\R^{m \times d})$.
\end{proposition}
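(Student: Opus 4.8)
The plan is to prove sufficiency by a localization-and-approximation scheme that reduces the general Young measure to elementary "building blocks" for which an explicit generating sequence can be constructed, and then to patch these blocks together. First I would record the standard structural reductions: it suffices, by a density and diagonal argument, to generate $\nu$ as a Young measure in duality with a countable family of test integrands; moreover the two Jensen inequalities~(i) and~(ii) are stable under these operations. The key tool is a \emph{localization principle}: at $\Lcal^d$-a.e.\ point $x_0$ one may blow up $\nu$ to obtain a \emph{homogeneous} gradient Young measure (a "regular tangent Young measure") whose barycenter is the affine map with gradient $\nabla u(x_0)$, and whose $\lambda$-mass density is controlled by $\frac{\di\lambda_\nu}{\di\Lcal^d}(x_0)$; at $\lambda_\nu^s$-a.e.\ point one obtains a tangent measure that is purely concentrated, with barycenter a rank-one (by Alberti) or at least a fixed matrix $\dpr{\id,\nu_{x_0}^\infty}$. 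The Jensen inequalities are exactly the conditions that make these tangent objects satisfiable, i.e.\ that allow one to realize each of them by an \emph{oscillating/concentrating sequence of gradients} on the unit cube with affine (resp.\ linear-growth) boundary data.

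The heart of the argument — and the step I expect to be the main obstacle — is the construction at a single regular point: given a matrix $A = \nabla u(x_0)$, a probability measure $\mu$ on $\R^{m\times d}$, a "recession" measure $\mu^\infty$ on $\partial\Bbb_{m\times d}$, and a mass parameter $\ell = \frac{\di\lambda_\nu}{\di\Lcal^d}(x_0)$ satisfying
\[
 h\Bigl(\dpr{\id,\mu} + \dpr{\id,\mu^\infty}\,\ell\Bigr) \le \dpr{h,\mu} + \dpr{h^\#,\mu^\infty}\,\ell
 \qquad\text{for all quasiconvex }h,
\]
one must produce a sequence $w_k \in \Wrm^{1,1}(Q;\R^m)$ with $w_k = Ax$ on $\partial Q$ generating precisely the homogeneous Young measure $(\mu,\,\ell\,\Lcal^d\restrict Q,\,\mu^\infty)$. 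The natural route is via a Hahn--Banach separation in the space of generalized Young measures: the set of Young measures generated by such sequences with fixed affine boundary data is (after suitable truncation) convex and weak-* closed, and its "polar" is computed by the quasiconvex relaxation; the Jensen inequality says our candidate lies in the bipolar, hence in the set itself. Making this rigorous requires the duality between homogeneous BV-gradient Young measures and quasiconvex integrands with linear growth — this is where one would normally invoke a relaxation theorem, and the novelty of the present approach is to replace that by the rigidity argument alluded to in the introduction, controlling the singular part through Alberti's theorem so that condition~(ii) guarantees the concentration directions are rank-one and hence attainable by laminates.

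Once every elementary block is realized, the final step is a \emph{patching/averaging} argument: cover $\Omega$ (up to small measure) by small cubes $Q_i$ on which $\nu$ is close to its blow-up at the center $x_i$, insert the corresponding generating sequences $w_k^{(i)}$ rescaled to $Q_i$ and glued to $u$ across the interfaces, and check that the boundary mismatch contributes negligibly to $\lambda_\nu$ (here the hypothesis $\lambda_\nu(\partial\Omega)=0$ and an absolute-continuity argument for the interface measure are used). A diagonal extraction over the cube size, the truncation level, and $k$ then yields a single sequence generating $\nu$, which proves $\nu \in \GY(\Omega;\R^{m\times d})$. The technical care lies in ensuring that the gluing does not create spurious concentration on lower-dimensional sets and that the recession parts add up correctly; the rigidity improvement for BD mentioned in the introduction is the analogue of exactly this concentration-control step, which is why it must be strengthened there.
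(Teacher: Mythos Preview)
Your plan is the paper's approach: localize to regular and singular tangent Young measures, run a Hahn--Banach argument (against quasiconvex integrands, using the Jensen inequalities) to show each tangent Young measure is a gradient Young measure, then glue via a Morse-type covering with boundary control.

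Your account of the singular step, however, misidentifies the role of rigidity. At $\lambda_\nu^s$-a.e.\ point the singular tangent Young measure $\sigma$ is \emph{not} homogeneous: its concentration measure $\lambda_\sigma$ is an arbitrary (nonzero) tangent measure of $\lambda_\nu^s$, and the underlying deformation is not affine. The paper does not use Alberti's theorem here (it is deliberately avoided); instead the Rigidity Lemma for $\BV$ is applied to the barycenter $[\sigma] = A_0\,\lambda_\sigma$ with fixed $A_0 = \dpr{\id,\nu_{x_0}^\infty}$, and its output is a constraint on $\lambda_\sigma$: one-directional in $\xi$ if $A_0 = a\otimes\xi$, a multiple of Lebesgue if $\rank A_0 \ge 2$, unconstrained if $A_0 = 0$. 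This geometric constraint is precisely what is needed to prove that the relevant set $\GY_0^\sing(A_0)$ is convex and weak*-closed (so Hahn--Banach applies) and to carry out the averaging and slice-gluing constructions inside it; the rank-one case requires working in rotated cubes with $\xi$-directional test functions and a staircase construction, not a laminate. The rigidity is thus not a replacement for the relaxation theorem --- the Hahn--Banach/quasiconvex-envelope argument is that replacement --- but rather the ingredient that makes the Hahn--Banach argument go through at singular points. The paper handles the three cases $A_0 = a\otimes\xi$, $\rank A_0 \ge 2$, $A_0 = 0$ separately in adapted spaces.
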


In contrast to the proof strategy in~\cite{KriRin10CGGY} we here rely on a local approach, i.e.\ we consider blow-ups of Young measures in the form of tangent Young measures as introduced in~\cite{Rind12LSYM} (we use the newer, slightly streamlined versions of the localization principles from~\cite{Rind11PhD}), see the pivotal Propositions~\ref{prop:localize_reg},~\ref{prop:localize_sing} for the precise formulation involving \enquote{tangent Young measures}. This strategy is somewhat reminiscent of the one employed by Kinderlehrer \& Pedregal~\cite{KinPed91CYMG}, which proceeds by reduction to the case of \enquote{homogeneous} Young measures and a geometric Hahn--Banach argument. Owing to the more complex fine structure of BV-functions as opposed to $W^{1,1}$-functions, the present proof of course requires additional ideas. In particular, the local arguments differ at \enquote{regular} and \enquote{singular} points. At the latter points we need to employ Alberti's Rank-One Theorem~\cite{Albe93ROPD} to infer that the local structure of the tangent Young measures is sufficiently constrained (this use of Alberti's theorem seems to be a genuine requirement and Alberti's theorem cannot be replaced by a rigidity lemma as in~\cite{Rind12LSYM}). After the \enquote{local characterization} is complete and we have obtained generating sequences (consisting of gradients) for all tangent Young measures, we glue these sequences together in order to construct a sequence of gradients generating the Young measure we started with. In all of these arguments we rely heavily on the machinery for generalized Young measures developed in~\cite{KriRin10CGGY,Rind12LSYM,Rind11LSIF,Rind11PhD}.

The paper is organized as follows: After recalling basic facts, in particular about generalized Young measures, in Section~\ref{sc:setup}, we start by showing a local version of the BV-Young measure characterization at regular points in Section~\ref{sc:reg} and then at singular points in Section~\ref{sc:sing}. With these local results at hand, we prove Theorem~\ref{thm:GYM_characterization} in Section~\ref{sc:proof}. The final Section~\ref{sc:splitting} closes the paper with the aforementioned application of the characterization result to the splitting of generating sequences.

\section*{Acknowledgements}

The author wishes to thank Jan Kristensen for many stimulating discussions and for reading a preliminary version of the manuscript. Further, thanks are due to the anonymous referees for a careful reading of the manuscript. The support of the Oxford Centre for Nonlinear PDE through the EPSRC Science and Innovation Award to OxPDE (EP/E035027/1) is gratefully acknowledged.

\section{Setup} \label{sc:setup}

We define the balls $B(x,r) := x + r \Bbb^d$ ($r > 0$), where $\Bbb^d$ is the unit ball, and analogously the cubes $Q(x,r) := x + rQ$, where $Q$ is a (possibly rotated) unit cube. The standard unit cube is denoted by $Q(0,1) := (-1/2,1/2)^d$, $\Sbb^{d-1}$ is the unit sphere in $\R^d$ and $\omega_d = \abs{\Bbb^d}$ is the volume of $\Bbb^d$. The matrix space $\R^{m \times d}$ will always be equipped with the Frobenius norm (the Euclidean norm in $\R^{md}$), its unit ball and sphere are $\Bbb^{m \times d}$ and $\partial \Bbb^{m \times d}$, respectively. In all of the following, $\Omega \subset \R^d$ denotes an open bounded Lipschitz domain.

\subsection{BV-functions and quasiconvexity} \label{ssc:BVQC}

A function $u \in \Lrm^1(\Omega;\R^m)$ is said to be a function of bounded variation, $u \in \BV(\Omega;\R^m)$, if its distributional derivative $Du$ can be represented as a finite $\R^{m \times d}$-valued Radon measure carried by $\Omega$. The space $\BV$ is a non-reflexive Banach space under the norm $\norm{u}_{\BV} := \norm{u}_{\Lrm^1} + \abs{Du}(\Omega)$, where $\abs{Du}$ is the total variation measure of $Du$. We will often use the Lebesgue--Radon--Nikod\'{y}m decomposition
\[
  Du = \nabla u \, \Lcal^d \restrict \Omega + D^s u = \nabla u \, \Lcal^d \restrict \Omega + \frac{\di D^s u}{\di \abs{D^s u}} \, \abs{D^s u},
\]
where $\frac{\di D^s u}{\di \abs{D^s u}} \in \Lrm^1(\Omega,\abs{D^s u};\partial \Bbb^{m \times d})$ is the polar function of $D^s u$, i.e.\ the Radon--Nikod\'{y}m derivative of $D^s u$ with respect to its total variation measure $\abs{D^s u}$. Boundedness in norm of a sequence $(u_j) \subset \BV(\Omega;\R^m)$ entails weak* (sequential) compactness, i.e.\ one can find a subsequence (here, like in the following, not relabeled) $u_j \toweakstar u \in \BV(\Omega;\R^m)$, meaning that $u_j \to u$ strongly in $\Lrm^1(\Omega;\R^m)$ and $Du_j \toweakstar Du$ in the sense of Radon measures. For further information on the space BV and its properties we refer to~\cite{AmFuPa00FBVF}, other references are~\cite{Ziem89WDF,EvaGar92MTFP}.

Starting with Morrey's work~\cite{Morr52QSMI}, the natural notion of convexity for minimization problems involving gradients has long been known to be that of quasiconvexity. A locally bounded Borel function $f \colon \R^{m \times d} \to \R$ is said to be \term{quasiconvex} if
\[
  f(A) \leq \dashint_{\Bbb^d} f(A + \nabla \psi(y)) \dd y
  \qquad \text{for all $\psi \in \Crm_c^\infty(\Bbb^d;\R^m)$ and all $A \in \R^{m \times d}$.}
\]
It can be shown that in this definition one may replace $\Bbb^d$ by any bounded open Lipschitz domain and the space $\Crm^\infty$ by $\Wrm^{1,\infty}$ without changing the definition of quasiconvexity. More about this fundamental class of functions can be found in the book~\cite{Daco08DMCV}. We also define the quasiconvex envelope $Qh \colon \R^{m \times d} \to \R \cup \{-\infty\}$ of a continuous function $h  \colon \R^{m \times d} \to \R$ to be the largest quasiconvex function less than or equal to $h$ (possibly identically $-\infty$ if no such function exists). Then, one can show that
\begin{equation} \label{eq:Qh_inf_formula}
  Qh(A) = \inf \, \setBB{ \dashint_{\Bbb^d} h(A + \nabla \psi(y)) \dd y }{ \psi \in \Crm_c^\infty(\Bbb^d;\R^m) },
\end{equation}
which is itself a quasiconvex function or identically $-\infty$, see Section~6.3 in~\cite{Daco08DMCV} and the appendix of~\cite{KinPed91CYMG}.

\subsection{Young measures}

This section gives a brief overview of the basic theory of generalized Young measures and recalls results that will be used later. We follow~\cite{KriRin10CGGY,Rind12LSYM,Rind11LSIF}, also see~\cite{Rind11PhD} for a more comprehensive introduction.

First, we need a suitable class of integrands: Let $\Ebf(\Omega;\R^{m \times d})$ be the set of all $f \in \Crm(\cl{\Omega} \times \R^{m \times d})$ such that
\begin{equation} \label{eq:S_def}
  (Sf)(x,\hat{A}) := (1-\absn{\hat{A}}) \, f \biggl(x, \frac{\hat{A}}{1-\absn{\hat{A}}} \biggr),
  \qquad x \in \cl{\Omega}, \, \hat{A} \in \Bbb^{m \times d}
\end{equation}
extends into a continuous function $Sf \in \Crm(\cl{\Omega \times \Bbb^{m \times d}})$. In particular, this implies that $f$ has \term{linear growth at infinity}, i.e.\ with $M := \norm{f}_{\Ebf(\Omega;\R^{m \times d})} := \norm{Sf \colon \cl{\Omega \times \Bbb^{m \times d}}}_\infty$,
\[
  \abs{f(x,A)} \leq M(1+\abs{A})
  \qquad\text{for all $(x,A) \in \cl{\Omega} \times \R^{m \times d}$.}
\]
For all $f \in \Ebf(\Omega;\R^{m \times d})$ the \term{(strong) recession function}
\[
  f^\infty(x,A) := \lim_{\substack{\!\!\!\! x' \to x \\ \!\!\!\! A' \to A \\ \; t \to \infty}}
    \frac{f(x',tA')}{t},
  \qquad x \in \cl{\Omega},  \,  A \in \R^{m \times d},
\]
exists as a continuous function. Sometimes this notion of a recession function is too strong and so for any function $h \in \Crm(\R^{m \times d})$ with linear growth at infinity we define the \term{generalized recession function}
\begin{equation} \label{eq:f_hash}
  h^\#(A) := \limsup_{\substack{\!\!\!\! A' \to A \\ \; t \to \infty}} \, \frac{h(tA')}{t},  \qquad x \in \cl{\Omega}, \, A \in \R^{m \times d}.
\end{equation}
We remark that for both flavors of recession function one can drop the additional sequence $A' \to A$ if the integrand in question is Lipschitz continuous (which will be the case for all integrands $h$ in this work for which we consider $h^\#$).

\begin{definition} \label{def:gYM}
A \term{(generalized) Young measure} with target space $\R^{m \times d}$ is a triple $\nu = (\nu_x,\lambda_\nu,\nu_x^\infty)$ comprising
\begin{itemize}
  \item[(i)] a parametrized family of probability measures $(\nu_x)_{x \in \Omega} \subset \Mbf^1(\R^{m \times d})$,
  \item[(ii)] a positive finite measure $\lambda_\nu \in \Mbf^+(\cl{\Omega})$ and
  \item[(iii)] a parametrized family of probability measures $(\nu_x^\infty)_{x \in \cl{\Omega}} \subset \Mbf^1(\partial \Bbb^{m \times d})$ (recall that $\partial \Bbb^{m \times d}$ contains all matrices $A \in \R^{m \times d}$ with $\abs{A} = 1$).
\end{itemize}
Moreover, we assume that $\nu$ has the following properties:
\begin{itemize}
  \item[(iv)] the map $x \mapsto \nu_x$ is weakly* measurable with respect to $\Lcal^d$, i.e.\ the function $x \mapsto \dpr{f(x,\frarg),\nu_x}$ is $\Lcal^d$-measurable for all bounded Borel functions $f \colon \Omega \times \R^{m \times d} \to \R$ (here $\dpr{\frarg,\frarg}$ is the Riesz duality pairing between continuous functions and measures),
  \item[(v)] the map $x \mapsto \nu_x^\infty$ is weakly* measurable with respect to $\lambda_\nu$, and
  \item[(vi)] $x \mapsto \dprn{\abs{\frarg},\nu_x} \in \Lrm^1(\Omega)$.
\end{itemize}
We collect all such Young measures $\nu$ in the set $\Ybf(\Omega;\R^{m \times d})$. The parametrized measure $(\nu_x)$ is called the \term{oscillation measure}, the measure $\lambda_\nu$ is the \term{concentration measure}, and $(\nu_x^\infty)$ is the \term{concentration-angle measure}; this terminology is illustrated in~\cite{KriRin10CGGY} and~\cite{Rind11PhD}.
\end{definition}

The \term{duality product} $\ddpr{f,\nu}$ for $f \in \Ebf(\Omega;\R^{m \times d})$ and $\nu \in \Ybf(\Omega;\R^{m \times d})$ is defined via
\begin{align*}
  \ddprb{f,\nu} &:= \int_\Omega \dprb{f(x,\frarg), \nu_x} \dd x
    + \int_{\cl{\Omega}} \dprb{f^\infty(x,\frarg),\nu_x^\infty} \dd \lambda_\nu(x) \\
  &:= \int_\Omega \int_{\R^{m \times d}} f(x,A) \dd \nu_x(A) \dd x
    + \int_{\cl{\Omega}} \int_{\partial \Bbb^{m \times d}} f^\infty(x,A) \dd \nu_x^\infty(A) \dd \lambda_\nu(x).
\end{align*}
One can see easily that $\ddpr{\frarg,\nu}$ for $\nu \in \Ybf(\Omega;\R^{m \times d})$ defines a linear and bounded functional on the Banach space $\Ebf(\Omega;\R^{m \times d})$. Hence, via $\ddpr{\frarg,\frarg}$, a Young measure can be considered a part of the dual space $\Ebf(\Omega;\R^{m \times d})^*$. This embedding gives rise to a weak* topology on $\Ybf(\Omega;\R^{m \times d})$ and so we say that $(\nu_j) \subset \Ybf(\Omega;\R^{m \times d})$ weakly* converges to $\nu \in \Ybf(\Omega;\R^{m \times d})$, in symbols $\nu_j \toweakstar \nu$, if $\ddpr{f,\nu_j} \to \ddpr{f,\nu}$ for all $f \in \Ebf(\Omega;\R^{m \times d})$.

The main compactness result in the space $\Ybf(\Omega;\R^{m \times d})$ states that if $(\nu_j) \subset \Ybf(\Omega;\R^{m \times d})$ is a sequence of Young measures such that
\[
  \supmod_j \, \ddprb{\ONE \otimes \abs{\frarg}, \nu_j} < \infty,
  \qquad\text{where}\qquad
  (\ONE \otimes \abs{\frarg})(x,A) = \abs{A},
\]
then $(\nu_j)$ is weakly* sequentially relatively compact in $\Ybf(\Omega;\R^{m \times d})$, i.e.\ there exists a subsequence (not relabeled) such that $\nu_j \toweakstar \nu$ for some $\nu \in \Ybf(\Omega;\R^{m \times d})$. It can also be shown that the set $\Ybf(\Omega;\R^{m \times d})$ is topologically weakly*-closed (as a subset of $\Ebf(\Omega;\R^{m \times d})^*$).

Another important notion is that of the \term{barycenter}
\[
  [\nu] := \dprb{\id,\nu_x} \, \Lcal_x^d \restrict \Omega + \dprb{\id,\nu_x^\infty} \, \lambda_\nu(\di x)
  \quad \in \Mbf(\cl{\Omega};\R^{m \times d})
\]
of a Young measure $\nu \in \Ybf(\Omega;\R^{m \times d})$; the barycenter is a matrix-valued Radon measure.

The following technical lemma is often useful (a proof of this can be found in the aforementioned references):

\begin{lemma} \label{lem:tensor_products_determine_YM}
There exists a countable set of functions $\{f_k\} = \setn{ \phi_k \otimes h_k }{ k \in \N } \subset \Ebf(\Omega;\R^{m \times d})$, where $\phi_k \in \Crm(\cl{\Omega})$ and $h_k \in \Crm(\R^{m \times d})$, such that $\ddpr{f_k,\nu_1} = \ddpr{f_k,\nu_2}$ for $\nu_1,\nu_2 \in \Ybf(\Omega;\R^{m \times d})$ and all $k \in \N$ implies $\nu_1 = \nu_2$. Moreover, all the $h_k$ can be chosen to be Lipschitz continuous and we may also require that every $h_k$ is either compactly supported in $\R^{m \times d}$ or positively $1$-homogeneous.
\end{lemma}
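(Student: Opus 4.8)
The plan is to regard $\Ebf(\Omega;\R^{m \times d})$, via the transform $S$ from~\eqref{eq:S_def}, as the separable space $\Crm(\cl{\Omega \times \Bbb^{m \times d}})$, to produce a countable family of tensor products of the required type whose linear span is dense there, and then to use that $\Ybf(\Omega;\R^{m \times d})$ sits injectively inside $\Ebf(\Omega;\R^{m \times d})^*$. First I would observe that $f \mapsto Sf$ is an isometric isomorphism from $\Ebf(\Omega;\R^{m \times d})$ onto $\Crm(\cl{\Omega \times \Bbb^{m \times d}})$: it is an isometry by the definition of $\norm{\cdot}_{\Ebf(\Omega;\R^{m \times d})}$, and it is onto because $F \mapsto \bigl[(x,A) \mapsto (1+\abs{A})F(x,A/(1+\abs{A}))\bigr]$ provides an inverse. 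Since $\cl{\Omega \times \Bbb^{m \times d}}$ is a compact metric space, $\Crm(\cl{\Omega \times \Bbb^{m \times d}})$, and hence $\Ebf(\Omega;\R^{m \times d})$, is separable. Moreover, for a tensor product one has $S(\phi \otimes h)(x,\hat A) = \phi(x)\,(Sh)(\hat A)$ with $(Sh)(\hat A) = (1-\abs{\hat A})\,h(\hat A/(1-\abs{\hat A}))$.

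Next I would single out two classes of \enquote{profiles} $h$, both admissible (so that $\phi \otimes h \in \Ebf(\Omega;\R^{m \times d})$) and both of the form required by the statement. The first consists of compactly supported Lipschitz $h$: then $Sh$ is supported in a closed ball strictly inside $\Bbb^{m \times d}$ and extends continuously by zero to $\partial \Bbb^{m \times d}$. The second consists of positively $1$-homogeneous Lipschitz $h$, that is $h(A) = \abs{A}\,g(A/\abs{A})$ with $g$ Lipschitz on $\partial \Bbb^{m \times d}$; then $(Sh)(\hat A) = \abs{\hat A}\,g(\hat A/\abs{\hat A})$ extends continuously (indeed, is Lipschitz) on all of $\cl{\Bbb^{m \times d}}$. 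The key point is the elementary splitting of a general $G \in \Crm(\cl{\Bbb^{m \times d}})$ as $G = (G - \widetilde G) + \widetilde G$, where $\widetilde G(\hat A) := \abs{\hat A}\,G(\hat A/\abs{\hat A})$ (and $\widetilde G(0) := 0$) is the $S$-image of a positively $1$-homogeneous function, while $G - \widetilde G$ vanishes on $\partial \Bbb^{m \times d}$ and is therefore a uniform limit of $S$-images of compactly supported Lipschitz functions. Hence the linear span of $\setn{Sh}{h \text{ in the first or second class}}$ is $\norm{\cdot}_\infty$-dense in $\Crm(\cl{\Bbb^{m \times d}})$.

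By the Stone--Weierstrass theorem, the span of $\setn{\phi \otimes G}{\phi \in \Crm(\cl\Omega),\ G \in \Crm(\cl{\Bbb^{m \times d}})}$ is dense in $\Crm(\cl{\Omega \times \Bbb^{m \times d}})$; combining this with the splitting above (approximating $G$ with $\phi$ held fixed), already the span of $\setn{\phi \otimes h}{\phi \in \Crm(\cl\Omega),\ h \text{ in the first or second class}}$ is dense in $\Ebf(\Omega;\R^{m \times d})$. Using separability, fix a countable $\norm{\cdot}_\infty$-dense set $\{\phi^0_k\} \subset \Crm(\cl\Omega)$, a countable family $\{h^{(1)}_k\}$ of compactly supported Lipschitz functions with $\{Sh^{(1)}_k\}$ dense in $\setn{G \in \Crm(\cl{\Bbb^{m \times d}})}{G \equiv 0 \text{ on } \partial \Bbb^{m \times d}}$, and a countable family $\{h^{(2)}_k\}$ of positively $1$-homogeneous Lipschitz functions with $\{Sh^{(2)}_k\}$ dense in $\setn{\hat A \mapsto \abs{\hat A}\,g(\hat A/\abs{\hat A})}{g \in \Crm(\partial \Bbb^{m \times d})}$ (in each case the Lipschitz ones being dense by mollification). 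Re-enumerating the products $\phi^0_k \otimes h^{(1)}_l$ and $\phi^0_k \otimes h^{(2)}_l$ as $\{f_k\} = \{\phi_k \otimes h_k\}$ yields a countable subset of $\Ebf(\Omega;\R^{m \times d})$ with dense linear span, every $h_k$ Lipschitz and each either compactly supported or positively $1$-homogeneous. If now $\ddpr{f_k,\nu_1} = \ddpr{f_k,\nu_2}$ for all $k$, then linearity and boundedness of $\ddpr{\cdot,\nu_i}$ on $\Ebf(\Omega;\R^{m \times d})$ together with density give $\ddpr{f,\nu_1} = \ddpr{f,\nu_2}$ for all $f \in \Ebf(\Omega;\R^{m \times d})$, whence $\nu_1 = \nu_2$ by injectivity of $\Ybf(\Omega;\R^{m \times d}) \embed \Ebf(\Omega;\R^{m \times d})^*$.

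The one genuinely delicate step is the construction of the two profile classes. A general $f \in \Ebf(\Omega;\R^{m \times d})$ encodes arbitrary recession behaviour, and $S$ ties the values of $h$ near infinity to the boundary values of $Sh$ on $\partial \Bbb^{m \times d}$, so no family of compactly supported profiles alone can be dense; the splitting $G = (G - \widetilde G) + \widetilde G$ is exactly what reconciles density with the structural requirement that each $h_k$ be compactly supported or positively $1$-homogeneous. A related subtlety is that a merely continuous positively $1$-homogeneous function need not be Lipschitz, so the Lipschitz approximation in the second class must be carried out on the profile $g$ over $\partial \Bbb^{m \times d}$ rather than on $h$ itself.
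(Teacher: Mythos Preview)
Your argument is correct. The paper itself does not prove this lemma but defers to the references~\cite{KriRin10CGGY,Rind12LSYM,Rind11LSIF,Rind11PhD}, so a direct comparison with a proof in the text is not possible; however, your approach via the isometry $S \colon \Ebf(\Omega;\R^{m \times d}) \to \Crm(\cl{\Omega \times \Bbb^{m \times d}})$, the splitting $G = (G-\widetilde G) + \widetilde G$ into a boundary-vanishing and a positively $1$-homogeneous part, and Stone--Weierstrass is exactly the standard route taken in those references.
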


Next, we define the set $\GY(\Omega;\R^{m \times d})$ of \term{gradient Young measures} as the collection of the Young measures $\nu \in \Ybf(\Omega;\R^{m \times d})$ with the property that there exists a norm-bounded sequence $(u_j) \subset \BV(\Omega;\R^m)$ such that the sequence $(Du_j)$ \term{generates} $\nu$, which in symbols we will write as $Du_j \toY \nu$, meaning that
\[
  \int_\Omega f(x,\nabla u_j(x)) \dd x + \int_\Omega f^\infty \biggl( x, \frac{\di D^s u_j}{\di \abs{D^s u_j}} \biggr) \dd \abs{D^s u_j}(x)  \quad\to\quad \ddprb{f,\nu}
\]
for all $f \in \Ebf(\Omega;\R^{m \times d})$.

The following lemma will be used frequently:

\begin{lemma}\label{lem:boundary_adjustment}
Let $\nu \in \GY(\Omega;\R^{m \times d})$ satisfy $\lambda_\nu(\partial \Omega) = 0$. Then, there exists a generating sequence $(u_j) \subset 
(\Wrm^{1,1} \cap \Crm^\infty)(\Omega;\R^m)$ with $Du_j \toY \nu$ and $u_j|_{\partial \Omega} = u|_{\partial \Omega}$, where $u \in \BV(\Omega;\R^m)$ is an arbitrary underlying deformation of $\nu$, that is, $u$ satisfies $[\nu] \restrict \Omega = Du$.
\end{lemma}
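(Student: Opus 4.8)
The plan is to start from an arbitrary generating sequence and successively improve it, using a diagonalization at each stage so that the Young measure generated is preserved. Concretely, let $(v_j) \subset \BV(\Omega;\R^m)$ be norm-bounded with $Dv_j \toY \nu$, and let $u \in \BV(\Omega;\R^m)$ be the given underlying deformation, so $Dv_j \toweakstar Du$ in $\Mbf(\Omega;\R^{m\times d})$. The first step is to reduce to smooth maps: by a standard mollification argument (mollify $v_j$ at a scale $\delta_j \downarrow 0$ chosen slowly enough), one obtains $(w_j) \subset \Crm^\infty(\Omega;\R^m) \cap \Wrm^{1,1}(\Omega;\R^m)$ that still generates $\nu$ and still satisfies $w_j \to u$ in $\Lrm^1$; here one uses that mollification does not create concentrations and is continuous for the strict topology, together with Lemma~\ref{lem:tensor_products_determine_YM} to check that only countably many test integrands need to be controlled along the diagonal sequence. (One must be slightly careful near $\partial\Omega$ since $\Omega$ is only Lipschitz, but mollification on a shrinking interior family of subdomains exhausting $\Omega$, patched with the original $v_j$ near the boundary via a cutoff, handles this; the cutoff region can be taken of vanishingly small measure and, since $\lambda_\nu(\partial\Omega)=0$, contributes nothing in the limit.)

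The second, and main, step is to correct the boundary values. Fix a reference underlying deformation $u \in \BV(\Omega;\R^m)$. We want to replace $w_j$ by $u_j$ with $u_j|_{\partial\Omega} = u|_{\partial\Omega}$ in the sense of traces, without changing the generated Young measure. The idea is the usual "cut-off near the boundary and interpolate'' construction: choose a sequence of open neighborhoods $\Omega \setminus \Omega_k \downarrow \partial\Omega$ with $\Lcal^d(\Omega\setminus\Omega_k)\to 0$ and, because $\lambda_\nu(\partial\Omega)=0$ and $|Du|(\partial\Omega)=0$, also $\lambda_\nu(\cl\Omega\setminus\Omega_k)\to 0$ and $|Du|(\Omega\setminus\Omega_k)\to 0$. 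On the collar $\Omega\setminus\Omega_k$ we glue $w_j$ (on the inside) to $u$ (on the outside) using a cutoff function $\rho_k$, i.e.\ set $u_{j,k} := \rho_k w_j + (1-\rho_k) \tilde u + (\text{lower order})$, where $\tilde u$ is a smooth approximation of $u$ agreeing with $u$ on $\partial\Omega$ in trace; the extra gradient produced by the cutoff is $\nabla\rho_k \otimes (w_j - \tilde u)$, which is controlled in $\Lrm^1$ by $\|\nabla\rho_k\|_\infty \|w_j - \tilde u\|_{\Lrm^1(\Omega\setminus\Omega_k)}$. Since $w_j \to u$ in $\Lrm^1(\Omega)$, for each fixed $k$ this interpolation error goes to $0$ as $j\to\infty$; and the contribution of the collar to any test integrand is bounded by $C\,\lambda_\nu(\cl\Omega\setminus\Omega_k) + C\,\Lcal^d(\Omega\setminus\Omega_k) + (\text{error})$ in the limit, which vanishes as $k\to\infty$. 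A diagonal choice $k = k(j)\to\infty$ slowly then yields $u_j := u_{j,k(j)}$ with $Du_j \toY \nu$, $u_j|_{\partial\Omega} = u|_{\partial\Omega}$, and by construction $u_j \in (\Wrm^{1,1}\cap\Crm^\infty)(\Omega;\R^m)$.

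The main obstacle is making the boundary correction quantitatively compatible with preservation of the Young measure: one must simultaneously control the (countably many, by Lemma~\ref{lem:tensor_products_determine_YM}) test-integrand limits $\ddpr{f_k,\cdot}$, the $\Lrm^1$-convergence to $u$, the smoothness, \emph{and} the trace condition, all through one diagonalization. The key structural facts that make this possible are that $\lambda_\nu$ puts no mass on $\partial\Omega$ (so the collar is "Young-measure-invisible'' in the limit), that $|Du|(\partial\Omega)=0$ (so the barycenter is likewise unaffected), and that mollification/interpolation errors are genuinely lower order in $\Lrm^1$ hence do not generate spurious concentrations — an integrand $f\in\Ebf$ has linear growth, so an $\Lrm^1$-small perturbation of the gradient changes $\int f(x,\nabla u_j)\,dx$ by a controlled amount only if one also controls the perturbation's mass, which is exactly what the collar estimates provide. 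Once these bookkeeping estimates are in place, the diagonal extraction is routine. I would also record that the argument is insensitive to whether one works with $Du$ or the symmetrized derivative $Eu$, so the same proof gives the BD-analogue needed later.
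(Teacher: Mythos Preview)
The paper does not actually prove this lemma; it is stated in Section~\ref{sc:setup} as a known technical fact and its proof is implicitly deferred to the references (the analogous statement is Lemma~4 in~\cite{KriRin10CGGY}). Your proposal follows precisely the standard route taken there: start from an arbitrary BV generating sequence, mollify (using the area-strict continuity of the functionals $\ddpr{f,\cdot}$ for $f\in\Ebf$ together with a diagonal argument over the countable separating family of Lemma~\ref{lem:tensor_products_determine_YM}), and then correct the boundary values by a cutoff interpolation on a shrinking collar, exploiting $\lambda_\nu(\partial\Omega)=0$ to make the collar Young-measure-invisible and the $\Lrm^1$-convergence $w_j\to u$ to kill the $\nabla\rho_k\otimes(w_j-\tilde u)$ term.

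One point worth making explicit rather than hiding in ``lower order'': the existence of a smooth $\tilde u\in(\Wrm^{1,1}\cap\Crm^\infty)(\Omega;\R^m)$ with $\tilde u|_{\partial\Omega}=u|_{\partial\Omega}$ for a given $u\in\BV(\Omega;\R^m)$ is itself a nontrivial input. It follows from the strict (indeed area-strict) approximation of BV-functions by smooth functions preserving the trace, see e.g.~\cite{AmFuPa00FBVF}, Theorem~3.9 and the strict continuity of the trace operator in Section~3.8 there. With that cited, your argument is complete and matches the intended one; your closing remark that the proof carries over verbatim to BD is also correct and is indeed how the paper uses the lemma in Section~\ref{sc:BD}.
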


Next, we recall ways to manipulate (gradient) Young measures. These results encapsulate the basic operations that are customarily performed on sequences in the Calculus of Variations, see~\cite{KriRin10CGGY,Rind11PhD} for proofs.

\begin{proposition}[Averaging] \label{prop:averaging}
Let $\nu \in \GY(\Omega;\R^{m \times d})$ satisfy $\lambda_\nu(\partial \Omega) = 0$. Also, assume that $[\nu] = Du$ for a function $u \in \BV(\Omega;\R^m)$ satisfying one of the following two properties:
\begin{itemize}
  \item[(i)] $u$ agrees with an affine function on the boundary $\partial \Omega$ or
  \item[(ii)] $\Omega$ is a cuboid with one face normal $\xi \in \Sbb^{d-1}$ and one (hence every) underlying deformation of $\nu$ is $\xi$-directional.
\end{itemize}
Then, there exists a Young measure $\bar{\nu} \in \GY(\Omega;\R^{m \times d})$ acting on $f \in \Ebf(\Omega;\R^{m \times d})$ as
\begin{equation} \label{eq:averaged_GYM_action}
\begin{aligned}
  \ddprb{f,\bar{\nu}} &= \int_\Omega \dashint_\Omega \dprb{f(x,\frarg),\nu_y} \dd y \dd x \\
  &\qquad + \frac{\lambda_\nu(\cl{\Omega})}{\abs{\Omega}} \int_\Omega \dashint_{\cl{\Omega}}
  \dprb{f^\infty(x,\frarg),\nu_y^\infty} \dd \lambda_\nu(y) \dd x.
\end{aligned}
\end{equation}
More precisely:
\begin{enumerate}
  \item[(1)] The oscillation measure $(\bar{\nu}_x)_x$ is $\Lcal^d$-essentially constant in $x$ and for all $h \in \Crm(\R^{m \times d})$ with linear growth at infinity it holds that
\[
  \qquad\dprb{h,\bar{\nu}_x} = \dashint_\Omega \dprb{h, \nu_y} \dd y  \qquad\text{a.e.}
\]
  \item[(2)] The concentration measure $\lambda_{\bar{\nu}}$ is a multiple of Lebesgue measure, $\lambda_{\bar{\nu}} = \alpha \Lcal^d \restrict \Omega$, where $\alpha = \lambda_\nu(\cl{\Omega}) / \abs{\Omega}$.
  \item[(3)] The concentration-angle measure $(\bar{\nu}_x^\infty)_x$ is $\Lcal^d$-essentially ($\lambda_{\bar{\nu}}$-essentially) constant and for all $h^\infty \in \Crm(\partial \Bbb^{m \times d})$ it holds that
\[
  \qquad\dprb{h^\infty,\bar{\nu}_x^\infty} = \dashint_{\cl{\Omega}} \dprb{h^\infty,\nu_y^\infty}
    \dd \lambda_\nu(y)   \qquad\text{a.e.}
\]
\end{enumerate}
\end{proposition}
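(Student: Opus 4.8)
The plan is to realize $\bar\nu$ as a weak* limit of \emph{tiled} gradient Young measures: I spread rescaled copies of a fixed generating sequence for $\nu$ over ever finer self-similar coverings of $\Omega$, so that the averaging in~\eqref{eq:averaged_GYM_action} emerges as a Riemann-sum limit, and then use (sequential) weak*-closedness of $\GY$ to conclude. Concretely, via Lemma~\ref{lem:boundary_adjustment} I fix a $\BV$-bounded generating sequence $(u_j)\subset(\Wrm^{1,1}\cap\Crm^\infty)(\Omega;\R^m)$ with $Du_j\toY\nu$ and $u_j|_{\partial\Omega}=u|_{\partial\Omega}$; smoothness kills the singular part, so $\int_\Omega f(x,\nabla u_j)\dd x\to\ddpr{f,\nu}$. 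After a translation, assume $0\in\Omega$. For each $n\in\N$ choose (by Vitali's covering theorem; in case~(ii), where $\Omega$ is a cuboid, simply the $n^d$ congruent sub-cuboids) pairwise disjoint self-similar copies $\Omega_i^{(n)}=x_i^{(n)}+r_i^{(n)}\Omega\subset\Omega$ with $x_i^{(n)}\in\Omega_i^{(n)}$, $r_i^{(n)}\le1/n$ and $\Lcal^d(\Omega\setminus\bigcup_i\Omega_i^{(n)})=0$. On $\Omega_i^{(n)}$ set $v_j^{(n)}(x):=r_i^{(n)}u_j\bigl((x-x_i^{(n)})/r_i^{(n)}\bigr)+\gamma_i^{(n)}$, where in case~(i)---after subtracting the boundary affine function from $u$ and every $u_j$, which only shifts $\nu$ and $\bar\nu$ by a fixed matrix and changes nothing in the assertion---one has $u_j|_{\partial\Omega}=0$ and takes $\gamma_i^{(n)}=0$, while in case~(ii) the $\gamma_i^{(n)}$ are layer-constant shift vectors chosen to make the traces of adjacent pieces coincide along common faces. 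Since the $\gamma_i^{(n)}$ are constant, $\nabla v_j^{(n)}(x)=\nabla u_j((x-x_i^{(n)})/r_i^{(n)})$ on $\Omega_i^{(n)}$, so $v_j^{(n)}\in\Wrm^{1,1}(\Omega;\R^m)$, $\|\nabla v_j^{(n)}\|_{\Lrm^1}=\|\nabla u_j\|_{\Lrm^1}$, and $\sup_j\|v_j^{(n)}\|_{\Lrm^1}<\infty$. Because on each tile $\nabla v_j^{(n)}$ is a spatial rescaling of $\nabla u_j$, letting $j\to\infty$ yields $Dv_j^{(n)}\toY\mu^{(n)}\in\GY(\Omega;\R^{m\times d})$, the Young measure equal on $\Omega_i^{(n)}$ to the rescaled copy of $\nu$; changing variables on each tile,
\[
  \ddprb{f,\mu^{(n)}}=\sum_i\bigl(r_i^{(n)}\bigr)^d\!\left[\int_\Omega\dprb{f(x_i^{(n)}+r_i^{(n)}y,\frarg),\nu_y}\dd y+\int_{\cl\Omega}\dprb{f^\infty(x_i^{(n)}+r_i^{(n)}y,\frarg),\nu_y^\infty}\dd\lambda_\nu(y)\right]
\]
for all $f\in\Ebf(\Omega;\R^{m\times d})$, and $\sup_n\ddpr{\ONE\otimes\abs{\frarg},\mu^{(n)}}=\int_\Omega\dpr{\abs{\frarg},\nu_y}\dd y+\lambda_\nu(\cl\Omega)<\infty$.

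Next I pass to the limit $n\to\infty$. Since $\sum_i(r_i^{(n)})^d=1$ and $r_i^{(n)}\le1/n$, both sums above are Riemann sums: writing $\dprb{f(x_i^{(n)}+r_i^{(n)}y,\frarg)-f(x_i^{(n)},\frarg),\nu_y}$ as a near-diagonal part (controlled by the modulus of continuity of $f$ on $\cl\Omega\times\{\abs{\frarg}\le\rho\}$ times $(\diam\Omega)/n$) plus a tail part (bounded by $4M\int_\Omega\dpr{\abs{\frarg}\mathbbm{1}_{\{\abs{\frarg}>\rho\}},\nu_y}\dd y\to0$ as $\rho\to\infty$), and using that $f^\infty$ is bounded and uniformly continuous with $\lambda_\nu$ finite for the second sum, I obtain that $\ddpr{f,\mu^{(n)}}$ converges to the right-hand side of~\eqref{eq:averaged_GYM_action} for every $f$. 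By the uniform bound, weak* compactness in $\Ybf$ and Lemma~\ref{lem:tensor_products_determine_YM}, the whole sequence $(\mu^{(n)})$ converges weakly* to a Young measure $\bar\nu\in\Ybf(\Omega;\R^{m\times d})$ acting as in~\eqref{eq:averaged_GYM_action}. Finally $\bar\nu\in\GY$: the $\mu^{(n)}$ all lie in a fixed $\ddpr{\ONE\otimes\abs{\frarg},\,\cdot\,}$-ball of $\Ybf$, on which weak* convergence is metrizable, and each $\mu^{(n)}$ is generated by $(Dv_j^{(n)})_j$; a diagonal choice $j(n)$ then produces a $\BV$-bounded sequence with $Dv_{j(n)}^{(n)}\toY\bar\nu$.

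It remains to read off (i)--(iii) by inserting $f=\phi\otimes h$ into~\eqref{eq:averaged_GYM_action}. For $h\in\Crm_c(\R^{m\times d})$ (so $h^\infty\equiv0$) the identity reads $\int_\Omega\phi\dpr{h,\bar\nu_x}\dd x=\int_\Omega\phi\dd x\cdot\dashint_\Omega\dpr{h,\nu_y}\dd y$ for all $\phi\in\Crm(\cl\Omega)$, hence $\dpr{h,\bar\nu_x}=\dashint_\Omega\dpr{h,\nu_y}\dd y$ for a.e.\ $x$; a truncation argument (using $\dpr{\abs{\frarg},\bar\nu_x}<\infty$ a.e.\ and $\int_\Omega\dpr{\abs{\frarg},\nu_y}\dd y<\infty$) extends this to every $h\in\Crm(\R^{m\times d})$ of linear growth---this is~(i). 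Applying the latter with $h=\abs{\frarg}$ and subtracting the now-known oscillation term from the identity for $f=\phi\otimes\abs{\frarg}$ gives $\int_{\cl\Omega}\phi\dd\lambda_{\bar\nu}=\frac{\lambda_\nu(\cl\Omega)}{\abs\Omega}\int_\Omega\phi\dd x$, i.e.\ $\lambda_{\bar\nu}=\alpha\Lcal^d\restrict\Omega$ with $\alpha=\lambda_\nu(\cl\Omega)/\abs\Omega$---this is~(ii). Lastly, extending an arbitrary $h^\infty\in\Crm(\partial\Bbb^{m\times d})$ positively $1$-homogeneously to some $h\in\Crm(\R^{m\times d})$, inserting $f=\phi\otimes h$, and using (i) for the oscillation term and (ii) for $\lambda_{\bar\nu}$ (dividing by $\alpha$; the case $\alpha=0$ is vacuous) gives $\dpr{h^\infty,\bar\nu_x^\infty}=\dashint_{\cl\Omega}\dpr{h^\infty,\nu_y^\infty}\dd\lambda_\nu(y)$ for a.e.\ $x$---this is~(iii).

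I expect the main obstacle to be the gluing in case~(ii): the covering must be chosen compatibly with the $\xi$-stratification of $\Omega$ (a stack of slabs perpendicular to $\xi$, each filled transversally), and the shift vectors $\gamma_i^{(n)}$ must be chosen so that the rescaled pieces assemble into a \emph{single} $\Wrm^{1,1}$-map \emph{without} introducing any spurious gradient contribution. This succeeds only because the underlying deformation is $\xi$-directional---two $\xi$-directional functions whose traces are prescribed on $\partial\Omega$ agree on any face perpendicular or parallel to $\xi$ once the additive constants are matched---so that case~(i) is, by comparison, immediate. A secondary technicality is that the Riemann-sum passage must be carried out with only $\Lrm^1$-, not $\Lrm^\infty$-, control on $y\mapsto\dpr{\abs{\frarg},\nu_y}$, which is exactly what forces the tail-truncation estimate above.
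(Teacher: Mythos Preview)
Your proof is correct and follows essentially the approach the paper has in mind: the paper does not prove Proposition~\ref{prop:averaging} in detail but refers to Proposition~7 of~\cite{KriRin10CGGY} for case~(i) and remarks that case~(ii) ``is similar, but requires an additional standard staircase construction''---precisely your tiling-plus-Riemann-sum argument and your layer-constant shifts~$\gamma_i^{(n)}$. Your identification of the two genuine technical points (the staircase gluing in case~(ii), and the tail-truncation needed because $y\mapsto\dpr{\abs{\frarg},\nu_y}$ is only in $\Lrm^1$) matches what is actually needed in the cited proof.
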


Here, the $\xi$-directionality of an underlying deformation $u \in \BV(\Omega;\R^m)$, i.e.\ $[\nu] = Du$, means that $u(x) = v(x \cdot \xi)$ with $v \in \BV(\R;\R^m)$. The proof of case~(i) is contained in Proposition~7 of~\cite{KriRin10CGGY}, the proof of~(ii) is similar, but requires an additional standard staircase construction.

Applying the averaging principle to a fixed function (or, more precisely, an elementary Young measure) yields the following corollary, which we here give in the slightly extended form with a different domain to be covered.

\begin{corollary}[Generalized Riemann--Lebesgue Lemma] \label{cor:Riemann_Lebesgue}
Let $u \in \BV(\Omega;\R^m)$ that agrees with an affine function on $\partial \Omega$. Then, for every open bounded Lipschitz domain $D \subset \R^d$ there exists $\nu \in \GY(D;\R^{m \times d})$ that acts on $f \in \Ebf(D;\R^{m \times d})$ as
\begin{align*}
  \ddprb{f,\nu} &= \int_D \dashint_\Omega f(x,\nabla u(y)) \dd y \dd x \\
  &\qquad + \frac{\abs{D^s u}(\Omega)}{\abs{\Omega}} \int_D \dashint_\Omega f^\infty \biggl(x,
    \frac{\di D^s u}{\di \abs{D^s u}}(y) \biggr) \dd \abs{D^s u}(y) \dd x.
\end{align*}
Moreover, $\lambda_\nu(\partial \Omega) = 0$.
\end{corollary}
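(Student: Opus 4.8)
The plan is to realize $\nu$ as the gradient Young measure obtained by ``periodically'' transplanting a homogeneous gradient Young measure built from $u$ onto $D$ by means of a Vitali covering. First I would identify the elementary object: the constant sequence $u_j:=u$ generates the elementary gradient Young measure $\nu^0\in\GY(\Omega;\R^{m\times d})$ with $\nu^0_x=\delta_{\nabla u(x)}$, $\lambda_{\nu^0}=\abs{D^su}$ and $(\nu^0_x)^\infty=\delta_{P(x)}$, where $P:=\tfrac{\di D^su}{\di\abs{D^su}}$; indeed $\int_\Omega f(x,\nabla u)\dd x+\int_\Omega f^\infty(x,P)\dd\abs{D^su}=\ddprb{f,\nu^0}$ for all $f\in\Ebf(\Omega;\R^{m\times d})$, and $\lambda_{\nu^0}(\partial\Omega)=0$ since $\abs{D^su}$ is carried by the open set $\Omega$. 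By hypothesis $u$ coincides with an affine map $\ell$ on $\partial\Omega$, so Proposition~\ref{prop:averaging}(i) applies to $\nu^0$ and produces a homogeneous $\bar\nu^0\in\GY(\Omega;\R^{m\times d})$ with $\Lcal^d$-essentially constant oscillation and concentration-angle measures, $\lambda_{\bar\nu^0}=\alpha\,\Lcal^d\restrict\Omega$ ($\alpha=\abs{D^su}(\Omega)/\abs{\Omega}$), and total action given by~\eqref{eq:averaged_GYM_action}. From Proposition~\ref{prop:averaging} one reads off directly that $\ddprb{\ONE\otimes h,\bar\nu^0}=\ddprb{\ONE\otimes h,\nu^0}$ for every $h$ of linear growth with $h^\infty$ existing, and (testing against linear $h$, using Gauss--Green: the mean of $Du$ over $\Omega$ equals $\nabla\ell$) that $[\bar\nu^0]=D\ell\restrict\Omega$. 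In particular the case $D=\Omega$ of the corollary is already settled by taking $\nu:=\bar\nu^0$.

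For general $D$ I would first fix an adapted generating sequence. Since $[\bar\nu^0]=D\ell\restrict\Omega$ with $\ell$ affine and $\lambda_{\bar\nu^0}(\partial\Omega)=0$, Lemma~\ref{lem:boundary_adjustment} yields $(w_j)\subset(\Wrm^{1,1}\cap\Crm^\infty)(\Omega;\R^m)$ with $Dw_j\toY\bar\nu^0$ and $w_j|_{\partial\Omega}=\ell|_{\partial\Omega}$, so that $w_j-\ell\in\Wrm^{1,1}_0(\Omega;\R^m)$. Given $\eps>0$, cover $D$ up to an $\Lcal^d$-null set by countably many pairwise disjoint homothetic copies $\Omega_i=x_i+r_i\Omega\subset D$ with $r_i<\eps$ (a fine Vitali cover, available since $\Omega$ is Lipschitz, hence $\abs{\partial\Omega}=0$), and put
\[
  v_j^{\eps}(x):=\ell(x)+r_i\,(w_j-\ell)\bigl(\tfrac{x-x_i}{r_i}\bigr)\ \ (x\in\Omega_i),\qquad v_j^{\eps}(x):=\ell(x)\ \ (x\in D\setminus\textstyle\bigcup_i\Omega_i).
\]
Each summand $v_j^\eps-\ell$ lies in $\Wrm^{1,1}_0(\Omega_i;\R^m)$, one has $\nabla v_j^\eps(x)=\nabla w_j(\tfrac{x-x_i}{r_i})$ on $\Omega_i$, and $\sum_i\int_{\Omega_i}\abs{\nabla(v_j^\eps-\ell)}\dd x=\bigl(\sum_i r_i^d\bigr)\norm{\nabla(w_j-\ell)}_{\Lrm^1(\Omega)}\le\tfrac{\abs{D}}{\abs{\Omega}}\norm{\nabla(w_j-\ell)}_{\Lrm^1(\Omega)}$; hence the series defining $v_j^\eps$ converges in $\Wrm^{1,1}(D;\R^m)$, and $(v_j^\eps)_j$ is bounded in $\Wrm^{1,1}(D;\R^m)$ uniformly in $\eps$ and $j$.

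The core step is to identify the Young measure generated by $(Dv_j^\eps)_j$. By the compactness result in $\Ybf$ a subsequence satisfies $Dv_j^\eps\toY\nu^\eps\in\GY(D;\R^{m\times d})$, and since $v_j^\eps\in\Wrm^{1,1}$ there is no singular part, so $\ddprb{f,\nu^\eps}=\lim_j\int_D f(x,\nabla v_j^\eps)\dd x$. For $f=\phi\otimes h$ from the determining family of Lemma~\ref{lem:tensor_products_determine_YM} (so $\phi\in\Crm(\cl D)$ and $\phi\otimes h\in\Ebf(D;\R^{m\times d})$), the substitution $y=(x-x_i)/r_i$ gives $\int_D\phi(x)h(\nabla v_j^\eps(x))\dd x=\sum_i r_i^d\int_\Omega\phi(x_i+r_iy)h(\nabla w_j(y))\dd y$; replacing $\phi(x_i+r_iy)$ by $\phi(x_i)$ costs at most $\omega_\phi(\eps\,\diam\Omega)\,\bigl(\sum_i r_i^d\bigr)\sup_j\int_\Omega\abs{h(\nabla w_j)}\dd y\le C\,\omega_\phi(\eps\,\diam\Omega)$ uniformly in $j$ (linear growth of $h$). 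Letting $j\to\infty$ with $\eps$ fixed and using $Dw_j\toY\bar\nu^0$ together with $\ddprb{\ONE\otimes h,\bar\nu^0}=\ddprb{\ONE\otimes h,\nu^0}$, the main term tends to $\tfrac{1}{\abs{\Omega}}\bigl(\sum_i\abs{\Omega_i}\phi(x_i)\bigr)\ddprb{\ONE\otimes h,\nu^0}$; letting $\eps\to0$ the Riemann sum $\sum_i\abs{\Omega_i}\phi(x_i)$ converges to $\int_D\phi\dd x$. Thus $\ddprb{\phi\otimes h,\nu^\eps}\to\tfrac{1}{\abs{\Omega}}\int_D\phi\dd x\cdot\ddprb{\ONE\otimes h,\nu^0}$, which, after expanding the inner averages in the statement, is exactly the value of $\ddprb{\phi\otimes h,\nu}$ for the Young measure $\nu$ of the corollary. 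Since $(\nu^\eps)$ is bounded in $\Ybf(D;\R^{m\times d})$ and its limit is pinned down on the determining family, $\nu^\eps\toweakstar\nu$; as $\GY(D;\R^{m\times d})$ is weakly* closed, $\nu\in\GY(D;\R^{m\times d})$, and a diagonal choice $\eps=\eps_j\downarrow0$ gives a single sequence with $Dv_j^{\eps_j}\toY\nu$. Finally, reading off the formula, $\lambda_\nu=\tfrac{\abs{D^su}(\Omega)}{\abs{\Omega}}\,\Lcal^d\restrict D$ is absolutely continuous with respect to $\Lcal^d$, hence charges no $\Lcal^d$-null set; in particular $\lambda_\nu(\partial\Omega)=\lambda_\nu(\partial D)=0$.

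The main obstacle is the bookkeeping in the last step: keeping the two limits ($j\to\infty$ with $\eps$ fixed, then $\eps\to0$) cleanly separated, verifying that the error estimates are uniform in $j$, and carrying out the identification against the \emph{full} determining family — in particular for the positively $1$-homogeneous test integrands $h$ (where $h=h^\infty$ and concentrations do contribute), for which one must invoke the $\Ebf$-generation property $Dw_j\toY\bar\nu^0$ rather than naive pointwise limits of $h(\nabla w_j)$.
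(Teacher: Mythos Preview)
Your argument is correct and follows essentially the approach the paper indicates: the corollary is stated as an immediate consequence of applying the averaging principle (Proposition~\ref{prop:averaging}) to the elementary Young measure of $u$, with the transfer to an arbitrary domain $D$ handled by a covering-and-rescaling construction, which is exactly what you carry out in detail. The paper does not spell out the Vitali covering step, but your execution is the natural way to make precise the phrase ``slightly extended form with a different domain to be covered''.
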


We will also need the following approximation result:

\begin{proposition}[Approximation] \label{prop:approximation}
Let $\nu \in \GY(\Omega;\R^{m \times d} )$ satisfy $\lambda_\nu(\partial \Omega) = 0$. Also, assume that $[\nu] = Du$ for a function $u \in \BV(\Omega;\R^m)$ satisfying one of the conditions (i), (ii) from Proposition~\ref{prop:averaging}. Then, for all $k \in \N$, there exists a partition $(C_{kl})_l$ of $(\Lcal^d + \lambda_\nu)$-almost all of $\cl{\Omega}$ into open sets $C_{kl}$, $l = 1,\ldots,N(k)$, with diameters at most $1/k$ and $(\Lcal^d + \lambda_\nu)(\partial C_{kl}) = 0$, and a sequence of Young measures $(\nu_k) \subset \GY(\Omega;\R^{m \times d})$ such that
\[
  \nu_k \toweakstar \nu  \quad\text{in $\Ybf(\Omega;\R^{m \times d})$}
  \qquad\text{as $k \to \infty$}
\]
and for every $f \in \Ebf(\Omega;\R^{m \times d} )$
\begin{align*}
  \ddprb{f,\nu_k} &= \sum_{l = 1}^{N(k)} \ddprb{f,\overline{\nu \restrict {C_{kl}}}} \\
  &= \sum_{l = 1}^{N(k)} \int_{C_{kl}} \dashint_{C_{kl}} \dprb{f(x,\frarg),\nu_y} \dd y \dd x \\
  &\qquad + \sum_{l = 1}^{N(k)} \frac{\lambda_\nu(C_{kl})}{\abs{C_{kl}}} \int_{C_{kl}}
    \dashint_{C_{kl}} \dprb{f^\infty(x,\frarg),\nu_y^\infty} \dd \lambda_\nu(y) \dd x,
\end{align*}
where $\overline{\nu \restrict C_{kl}}$ designates the averaged Young measures to $\nu \restrict C_{kl}$ as in Proposition~\ref{prop:averaging}.
\end{proposition}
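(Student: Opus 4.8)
The plan is to deduce the Approximation Proposition from the Averaging Proposition (Proposition~\ref{prop:averaging}) by a covering-and-gluing argument, exploiting the fact that averaging acting on a restricted Young measure produces an explicitly-described Young measure, and that such averaged pieces can be reassembled into a genuine gradient Young measure which, as the mesh is refined, converges weakly* back to $\nu$. First, I would fix $k \in \N$ and, using the Besicovitch covering / Vitali-type argument together with the hypothesis $(\Lcal^d+\lambda_\nu)(\partial\Omega)=0$, choose a finite partition $(C_{kl})_{l=1}^{N(k)}$ of $(\Lcal^d+\lambda_\nu)$-almost all of $\cl\Omega$ into open sets of diameter at most $1/k$ with $(\Lcal^d+\lambda_\nu)(\partial C_{kl})=0$; one can take the $C_{kl}$ to be small cubes (or small affine images of $\Omega$ in the case of hypothesis~(i), or cuboids with the prescribed face normal $\xi$ in the case of hypothesis~(ii)) so that the averaging principle applies verbatim to each restricted Young measure $\nu\restrict C_{kl}$. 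On each such piece, Proposition~\ref{prop:averaging} delivers $\overline{\nu\restrict C_{kl}} \in \GY(C_{kl};\R^{m\times d})$ with the stated homogeneous action, and crucially with $\lambda_{\overline{\nu\restrict C_{kl}}}$ a multiple of $\Lcal^d\restrict C_{kl}$, so that in particular its concentration measure charges no boundary.

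Next I would define $\nu_k$ by gluing: set $\ddpr{f,\nu_k} := \sum_l \ddpr{f,\overline{\nu\restrict C_{kl}}}$ for $f \in \Ebf(\Omega;\R^{m\times d})$, which by the explicit formula in Proposition~\ref{prop:averaging} is exactly the double-integral expression claimed. To see that $\nu_k$ is itself a \emph{gradient} Young measure (not merely an element of $\Ybf$), I would take, on each $C_{kl}$, a generating sequence of Sobolev functions $u_j^{kl} \in (\Wrm^{1,1}\cap\Crm^\infty)(C_{kl};\R^m)$ furnished by Lemma~\ref{lem:boundary_adjustment} (legitimate since $\lambda_{\overline{\nu\restrict C_{kl}}}(\partial C_{kl})=0$) which can moreover be taken to agree with an affine function on $\partial C_{kl}$ — here one uses that the underlying deformation of $\overline{\nu\restrict C_{kl}}$ is itself affine, by construction of the averaged measure. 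Matching boundary values across the interfaces, the functions $u_j^{kl}$ patch together into $u_j^k \in \BV(\Omega;\R^m)$ (in fact $W^{1,1}$ off the interfaces, with no jump created along $\partial C_{kl}$ because of the affine matching, using $(\Lcal^d+\lambda_\nu)(\partial C_{kl})=0$), and $Du_j^k \toY \nu_k$ as $j\to\infty$. Hence $\nu_k \in \GY(\Omega;\R^{m\times d})$.

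Finally, I would show $\nu_k \toweakstar \nu$ in $\Ybf(\Omega;\R^{m\times d})$ as $k\to\infty$. By Lemma~\ref{lem:tensor_products_determine_YM} it suffices to test against a countable family $f = \phi\otimes h$ with $\phi \in \Crm(\cl\Omega)$ and $h \in \Crm(\R^{m\times d})$ Lipschitz, either compactly supported or positively $1$-homogeneous; for such $f$, the difference $\ddpr{f,\nu_k} - \ddpr{f,\nu}$ is a sum over $l$ of terms in which $\phi$ is replaced by its average over $C_{kl}$ against the relevant measure, so the difference is controlled by the oscillation of $\phi$ at scale $1/k$ times the total masses $\Lcal^d(\Omega) + \lambda_\nu(\cl\Omega)$, which tends to $0$ by uniform continuity of $\phi$. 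The main obstacle is the gluing step: one must verify that concatenating the Sobolev (or $W^{1,1}$) generating sequences across the cell interfaces does not introduce spurious concentration on the $(d-1)$-dimensional interfaces $\bigcup_l \partial C_{kl}$ and that the resulting global sequence indeed generates the glued measure $\nu_k$ — this is where the affine boundary matching (hence continuity of $u_j^k$ across interfaces) and the null-boundary conditions $(\Lcal^d+\lambda_\nu)(\partial C_{kl})=0$ are essential, and where one invokes Lemma~\ref{lem:boundary_adjustment} to arrange matching traces; the weak* convergence $\nu_k\toweakstar\nu$ is by comparison routine.
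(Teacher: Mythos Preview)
Your proof is correct and follows essentially the same approach as the paper (which defers the actual argument to \cite{KriRin10CGGY}, Section~5.3, where the $C_{kl}$ are taken as cuboids arranged in a lattice). One point deserves sharpening: in case~(ii), the averaged underlying deformation on each cell $C_{kl}$ is indeed affine, but its gradient $Du(C_{kl})/\abs{C_{kl}}$ varies from one $\xi$-slab to the next, so the assertion that ``no jump is created because of the affine matching'' is not automatic. What is needed is the staircase construction the paper invokes elsewhere (e.g.\ in the proof of Lemma~\ref{lem:GY_sing_properties}): since $u$ is $\xi$-directional, cells sharing the same $\xi$-interval have identical averaged gradient and glue trivially, while across faces orthogonal to $\xi$ both affine functions are constant (being $\xi$-directional), and so can be matched by adding a piecewise-constant vector, constant on each $\xi$-slab.
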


For the statements of the following propositions we first introduce \term{local (gradient) Young measures} $\nu \in \mathbf{[G]}\Ybf_\loc(\R^d;\R^{m \times d})$, which are defined on all of $\R^d$ and whose restrictions to any relatively compact set $U \subset \R^d$ lie in $\mathbf{[G]}\Ybf(U;\R^{m \times d})$. We will also use the notion of tangent measures, see~\cite{Rind11PhD} or Chapter~14 of~\cite{Matt95GSME} for this. Suffice it to say here that a \term{tangent measure} $\tau \in \Mbf_\loc(\R^d)$ to a Radon measure $\mu \in \Mbf_\loc(\R^d)$ at the point $x_0 \in \R^d$ is any (local) weak* limit of the pushforward measures $c_n T_*^{(x_0,r_n)} \mu := c_n \mu(x_0 + r_n \frarg)$, where $T^{(x_0,r_n)}(x) := (x-x_0)/r_n$ and $c_n > 0$, $r_n \todown 0$ are sequences of arbitrary (normalization) constants and scaling radii, respectively. Tangent measures are in general not unique and are collected in the set $\Tan(\mu,x_0)$. The works~\cite{Rind11LSIF,Rind12LSYM} extended this concept to Young measures and established the following localization principle:

\begin{proposition}[Localization at regular points] \label{prop:localize_reg}
Let $\nu \in \mathbf{[G]}\Ybf(\Omega;\R^{m \times d})$ be a (gradient) Young measure. Then, for $\Lcal^d$-almost every $x_0 \in \Omega$ there exists a \term{regular (gradient) tangent Young measure} $\sigma \in \mathbf{[G]}\Ybf_\loc(\R^d;\R^{m \times d})$ satisfying
\begin{align*}
  [\sigma] &= \biggl[ \dprb{\id,\nu_{x_0}} + \dprb{\id,\nu_{x_0}^\infty} \frac{\di \lambda_\nu}{\di \Lcal^d}(x_0) \biggr] \, \Lcal^d \in \Tan([\nu],x_0),  &\sigma_y &= \nu_{x_0} \quad\text{a.e.,}  \\
  \lambda_\sigma &= \frac{\di \lambda_\nu}{\di \Lcal^d}(x_0) \, \Lcal^d  \in \Tan(\lambda_\nu,x_0),
    &\sigma_y^\infty &= \nu_{x_0}^\infty \quad\text{a.e.}
\end{align*}
\end{proposition}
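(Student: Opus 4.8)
The plan is to realize $\sigma$ as a \emph{blow-up} of $\nu$, i.e.\ as a suitably rescaled weak* limit of $\nu$ around a carefully chosen point $x_0$, and then to identify its three components through Lebesgue-point and Besicovitch-differentiation arguments. Write $\lambda_\nu = g\,\Lcal^d\restrict\Omega + \lambda_\nu^s$ with $g := \di\lambda_\nu/\di\Lcal^d$ and $\lambda_\nu^s \perp \Lcal^d$, and fix a countable family $\{\phi_k \otimes h_k\}$ as in Lemma~\ref{lem:tensor_products_determine_YM} together with a countable dense set $\{h_k^\infty\} \subset \Crm(\partial\Bbb^{m \times d})$. For $x_0 \in \Omega$, $r > 0$ and $R \in \N$ with $rR < \dist(x_0,\partial\Omega)$, set $T^{(x_0,r)}(x) := (x-x_0)/r$ and define the \emph{rescaled Young measure} $\nu^{(x_0,r)} \in \mathbf{[G]}\Ybf(B(0,R);\R^{m \times d})$ by
\[
  (\nu^{(x_0,r)})_y := \nu_{x_0 + ry}, \qquad
  \lambda_{\nu^{(x_0,r)}} := r^{-d}\, T^{(x_0,r)}_*\lambda_\nu, \qquad
  (\nu^{(x_0,r)})_y^\infty := \nu_{x_0 + ry}^\infty,
\]
all objects being understood as restricted to $B(0,R)$. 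If $\nu$ is a gradient Young measure with $Du_j \toY \nu$, then the rescalings $u_j^{(r)}(y) := u_j(x_0 + ry)/r$ satisfy $Du_j^{(r)} = r^{-d}\, T^{(x_0,r)}_*(Du_j \restrict B(x_0,rR))$ on $B(0,R)$, so $Du_j^{(r)} \toY \nu^{(x_0,r)}$; in particular $\nu^{(x_0,r)} \in \GY(B(0,R);\R^{m \times d})$ and $[\nu^{(x_0,r)}] = r^{-d}\, T^{(x_0,r)}_*([\nu]\restrict B(x_0,rR))$.

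Now choose $x_0$ outside a suitable $\Lcal^d$-null set so that: $x_0$ is simultaneously a Lebesgue point of every $x \mapsto \dpr{h_k,\nu_x}$, of $x \mapsto \dprn{\abs{\frarg},\nu_x} \in \Lrm^1(\Omega)$ and of $g$; the Besicovitch derivative $(\di\lambda_\nu/\di\Lcal^d)(x_0)$ exists and equals $g(x_0)$, with $r^{-d}\lambda_\nu^s(B(x_0,r)) \to 0$ as $r \todown 0$; and, \emph{if} $g(x_0) > 0$ --- in which case $\lambda_\nu$ and $\Lcal^d$ are mutually absolutely continuous near $x_0$, so that $x \mapsto \nu_x^\infty$ may be regarded as $\Lcal^d$-measurable there --- $x_0$ is in addition a Lebesgue point of every $x \mapsto \dpr{h_k^\infty,\nu_x^\infty}$. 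Finally pick $r_n \todown 0$ avoiding the at most countably many radii with $\lambda_\nu(\partial B(x_0,r)) > 0$. Using the Lebesgue-point property of $x \mapsto \dprn{\abs{\frarg},\nu_x}$ and the existence of $(\di\lambda_\nu/\di\Lcal^d)(x_0)$ one checks $\sup_n \ddprb{\ONE \otimes \abs{\frarg}, \nu^{(x_0,r_n)}\restrict B(0,R)} < \infty$ for each fixed $R$. By the compactness theorem in $\Ybf$ and a diagonal extraction over $R = 1,2,\ldots$, a subsequence (not relabeled) satisfies $\nu^{(x_0,r_n)} \toweakstar \sigma$ for some $\sigma \in \Ybf_\loc(\R^d;\R^{m \times d})$, the convergence holding on every $B(0,R)$. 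In the gradient case each $\nu^{(x_0,r_n)}\restrict B(0,R)$ is a gradient Young measure, and since the class of gradient Young measures on a fixed domain is sequentially weak*-closed --- pick norm-bounded generating BV-sequences for each $n$ and diagonalize, using metrizability of the weak* topology on mass-bounded subsets of $\Ybf$ --- we obtain $\sigma\restrict B(0,R) \in \GY(B(0,R);\R^{m \times d})$ for all $R$, hence $\sigma \in \GY_\loc(\R^d;\R^{m \times d})$.

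It remains to identify the components of $\sigma$. \emph{Oscillation:} the Lebesgue-point property gives $\dpr{h_k,(\nu^{(x_0,r_n)})_y} = \dpr{h_k,\nu_{x_0 + r_n y}} \to \dpr{h_k,\nu_{x_0}}$ in $\Lrm^1_\loc(\R^d)$ for every $k$, whence, by Lemma~\ref{lem:tensor_products_determine_YM} applied fibrewise, $\sigma_y = \nu_{x_0}$ for a.e.\ $y$. \emph{Concentration:} splitting $\lambda_{\nu^{(x_0,r_n)}}$ into its rescaled absolutely continuous part, which has density $y \mapsto g(x_0 + r_n y) \to g(x_0)$ in $\Lrm^1_\loc$, and its rescaled singular part, of total mass $r_n^{-d}\lambda_\nu^s(B(x_0,r_n R)) \to 0$, yields $\lambda_{\nu^{(x_0,r_n)}} \toweakstar g(x_0)\,\Lcal^d$ on every $B(0,R)$; thus $\lambda_\sigma = (\di\lambda_\nu/\di\Lcal^d)(x_0)\,\Lcal^d$, and this measure lies in $\Tan(\lambda_\nu,x_0)$ (with normalization constants $c_n = r_n^{-d}$). \emph{Concentration-angle:} if $g(x_0) > 0$ then, by the choice of $x_0$, $\dpr{h_k^\infty,\nu^\infty_{x_0 + r_n y}} \to \dpr{h_k^\infty,\nu^\infty_{x_0}}$, so $\sigma_y^\infty = \nu_{x_0}^\infty$ for $\lambda_\sigma$-a.e.\ $y$; if $g(x_0) = 0$ then $\lambda_\sigma = 0$ and the assertion is vacuous. \emph{Barycenter:} from $[\nu^{(x_0,r_n)}] = r_n^{-d}\, T^{(x_0,r_n)}_*([\nu]\restrict B(x_0,r_n R))$ and the weak*-continuity of the barycenter operation $\nu \mapsto [\nu]$ on mass-bounded subsets of $\Ybf$ (it is the restriction to integrands of the form $\phi \otimes \id$ of the duality pairing) we obtain $[\sigma] = \wslim_n [\nu^{(x_0,r_n)}] \in \Tan([\nu],x_0)$.

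Beyond the bookkeeping of intersecting several $\Lcal^d$-full sets, the main obstacle is twofold. First, in the gradient case one must guarantee that the blow-up $\sigma$ keeps its gradient structure, i.e.\ $\sigma \in \GY_\loc$ and not merely $\sigma \in \Ybf_\loc$; this rests on the sequential weak*-closedness of the class of gradient Young measures on a fixed domain, proved by a diagonal extraction of generating sequences followed by a further diagonalization over the exhausting balls $B(0,R)$. Second, the concentration-angle measure $(\nu_x^\infty)$ is a priori only $\lambda_\nu$-measurable, so identifying its blow-up requires the dichotomy $g(x_0) > 0$ versus $g(x_0) = 0$: only on the set $\{g > 0\}$, where $\lambda_\nu$ and $\Lcal^d$ are locally mutually absolutely continuous, can one treat $x \mapsto \nu_x^\infty$ as an $\Lcal^d$-measurable map and invoke its Lebesgue points; on $\{g = 0\}$ the concentration part of the blow-up disappears and the statement becomes vacuous.
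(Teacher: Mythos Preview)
Your approach is correct and matches the blow-up construction that the paper itself sketches immediately after the statement (the paper does not prove this proposition in-text but refers to~\cite{Rind11PhD,Rind11LSIF}; the illustrative remark there is exactly your rescaling $\ddprn{\phi\otimes h,\sigma^{(r_n)}} = r_n^{-d}\ddprn{\phi((\frarg-x_0)/r_n)\otimes h,\nu}$). Your handling of the three components via Lebesgue-point and Besicovitch-differentiation arguments, the dichotomy $g(x_0)>0$ versus $g(x_0)=0$ for the concentration-angle, and the diagonal argument for preserving the gradient structure are all the expected ingredients.
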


To illustrate this result and why it facilitates the term \enquote{tangent Young measure}, we remark that at regular points $x_0$ there exists a sequence $(\sigma^{(r_n)}) \in \mathbf{[G]}\Ybf(\R^d;\R^{m \times d})$, where $r_n \todown 0$, with
\[
  \ddprb{\phi \otimes h, \sigma^{(r_n)}} = \frac{1}{r_n^d} \ddprB{\phi\Bigl(\frac{\frarg-x_0}{r_n}\Bigr) \otimes h,\nu},
  \qquad \phi \otimes h \in \Ebf(\Bbb^d;\R^{m \times d}),
\]
and such that $\sigma^{(r_n)} \toweakstar \sigma$. Thus, $\sigma$ indeed originates from a blow-up construction.

Carrying out a similar procedure for singular points, we observe that the resulting singular tangent Young measures $\sigma \in \Ybf_\loc(\R^d;\R^{m \times d})$ always have the property that $\sigma_y = \delta_0$ almost everywhere (see Section~3.3 of~\cite{Rind11PhD}). This suggests the following definitions:
\begin{align}
  \Ebf^\sing(\Omega;\R^{m \times d}) &:= \setb{ f \in \Ebf(\Omega;\R^{m \times d}) }{ \text{$f(x,\frarg)$ positively $1$-homogeneous, $x \in \Omega$} }, \label{eq:Esing} \\
  \Ybf^\sing(\Omega;\R^{m \times d}) &:= \setb{ \nu = (\nu_x,\lambda_\nu,\nu_x^\infty) \in \Ybf(\Omega;\R^{m \times d}) }{ \text{$\nu_x = \delta_0$ a.e.} }.  \label{eq:Ysing}
\end{align}
The definition of $\Ybf^\sing(\Omega;\R^{m \times d})$ in particular entails that (ii),~(iii),~(v) from Definition~\ref{def:gYM} hold. We call elements of $\Ybf^\sing(\Omega;\R^{m \times d})$ \term{singular Young measures}. Since clearly $\Ybf^\sing(\Omega;\R^{m \times d}) \subset \Ybf(\Omega;\R^{m \times d})$, all results for Young measures also hold for their singular counterparts. For example, the barycenter is
\[
  [\nu] := \dprb{\id,\nu_x^\infty} \, \lambda_\nu(\di x)
  \quad \in \Mbf(\cl{\Omega};\R^{m \times d})
  \qquad \text{for $\nu \in \Ybf^\sing(\Omega;\R^{m \times d})$.}
\]
The duality pairing between $f \in \Ebf^\sing(\Omega;\R^{m \times d})$ and $\nu \in \Ybf^\sing(\Omega;\R^{m \times d})$ is also clear:
\[
  \ddprb{f,\nu} := \int_{\cl{\Omega}} \int_{\partial \Bbb^{m \times d}} f(x,A) \dd \nu_x^\infty(A) \dd \lambda_\nu(x).
\]
We further define the corresponding local spaces $\Ebf^\sing_\loc(\R^d;\R^{m \times d}), \Ybf^\sing_\loc(\R^d;\R^{m \times d})$.

Regarding generation, we now likewise have to restrict attention to positively $1$-homogeneous test functions: For a norm-bounded sequence $(u_j) \subset \BV(\Omega;\R^m)$ we say that $(Du_j)$ \term{generates} $\nu$ \term{(in $\Ybf^\sing$)}, in symbols $Du_j \toY \nu$, if
\[
  \int_\Omega f(x,\nabla u_j(x)) \dd x + \int_\Omega f \biggl( x, \frac{\di D^s u_j}{\di \abs{D^s u_j}} \biggr) \dd \abs{D^s u_j}(x)  \quad\to\quad \ddprb{f,\nu}.
\]
for all $f = f^\infty \in \Ebf^\sing(\Omega;\R^{m \times d})$. We collect all $\nu \in \Ybf^\sing(\Omega;\R^{m \times d})$ with the property that there exists a generating sequence of BV-derivatives in the space $\GY^\sing(\Omega;\R^{m \times d})$ of \term{gradient singular Young measures}.

We can now state a localization principle at singular points:

\begin{proposition}[Localization at singular points] \label{prop:localize_sing}
Let $\nu \in \mathbf{[G]}\Ybf(\Omega;\R^{m \times d})$ be a (gradient) Young measure. Then, for $\lambda_\nu^s$-almost every $x_0 \in \Omega$, there exists a \term{singular (gradient) tangent Young measure} $\sigma \in \mathbf{[G]}\Ybf^\sing_\loc(\R^d;\R^{m \times d})$ satisfying
\begin{equation} \label{eq:loc_sing}
  [\sigma] = \dprb{\id,\nu_{x_0}^\infty} \lambda_\sigma \in \Tan([\nu],x_0), \quad
  \lambda_\sigma \in \Tan(\lambda_\nu^s,x_0) \setminus \{0\}, \quad
  \sigma_y^\infty = \nu_{x_0}^\infty \text{ $\lambda_\sigma$-a.e.}
\end{equation}
\end{proposition}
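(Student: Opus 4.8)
The plan is to follow the same blow-up strategy as for the regular-point localization principle (Proposition~\ref{prop:localize_reg}), but adapted to the singular measure $\lambda_\nu^s$ and to the space $\Ybf^\sing_\loc$ of singular (positively $1$-homogeneous) Young measures. The starting point is the abstract metrizability and compactness of $\Ybf^\sing_\loc(\R^d;\R^{m\times d})$: using a countable determining family as in Lemma~\ref{lem:tensor_products_determine_YM} (restricted to $1$-homogeneous $h_k$), one equips $\Ybf^\sing_\loc$ with a metric inducing local weak* convergence, so that subsequence extraction is available. For the gradient structure one uses that if $(u_j)\subset\BV$ is a generating sequence for $\nu$ with $Du_j\toY\nu$, then the blow-ups of the $u_j$ around $x_0$, suitably rescaled, are still BV-functions with uniformly controlled mass, and a diagonal argument across the blow-up radii $r_n\todown 0$ and the generating index $j$ will produce a generating sequence for the limiting object. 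This diagonalization is exactly the mechanism already used in~\cite{Rind12LSYM,Rind11PhD} for the regular case.

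The main steps, in order, are as follows. \emph{Step 1 (choice of good points).} By standard differentiation theory for Radon measures, for $\lambda_\nu^s$-a.e.\ $x_0$ one has simultaneously: $\Tan(\lambda_\nu,x_0)=\Tan(\lambda_\nu^s,x_0)$ and every element is nonzero (a consequence of the fact that $\lambda_\nu^s$ gives no mass near $x_0$ to sets where the Lebesgue part dominates, combined with the standard result that tangent measures to a Radon measure are nonzero a.e.); the Lebesgue--Besicovitch derivative $\frac{\di\lambda_\nu^s}{\di\lambda_\nu}(x_0)=1$; $x_0$ is a Lebesgue point of $x\mapsto\nu_x^\infty$ with respect to $\lambda_\nu$ in the weak* sense; and $x_0\notin\partial\Omega$ (permissible since $\lambda_\nu(\partial\Omega)=0$). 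Also, since $\nu_x=\delta_0$ is irrelevant for singular blow-ups, the oscillation part of the rescaled measures concentrates, which is what places the limit in $\Ybf^\sing_\loc$. \emph{Step 2 (rescaling).} Define the rescaled Young measures $\sigma^{(r_n)}$ by the pushforward/normalization formula displayed after Proposition~\ref{prop:localize_reg}, but now normalized by $\lambda_\nu(B(x_0,r_n))$ rather than $r_n^d$, so that $\lambda_{\sigma^{(r_n)}}$ has unit-ish mass on the unit ball; this is the correct normalization at a singular point. One checks $\sigma^{(r_n)}\in\mathbf{[G]}\Ybf^\sing_\loc(\R^d;\R^{m\times d})$ (gradient structure is preserved under affine rescaling of the generating BV-functions, up to passing to $\Ybf^\sing$ by forgetting the regular part). \emph{Step 3 (compactness and limit).} Extract a subsequence $\sigma^{(r_n)}\toweakstar\sigma$ in $\Ybf^\sing_\loc$; the uniform mass bound from Step~2 and the local weak* compactness theorem give $\sigma\in\Ybf^\sing_\loc$ with $\lambda_\sigma\in\Tan(\lambda_\nu^s,x_0)$, and $\lambda_\sigma\neq 0$ by Step~1. \emph{Step 4 (identification of the three components).} The barycenter is weak*-continuous under these rescalings, so $[\sigma]\in\Tan([\nu],x_0)$. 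For the concentration-angle measure, test with functions of the form $\phi\otimes h$ with $h$ $1$-homogeneous and use the Lebesgue-point property from Step~1: the average of $\nu_x^\infty$ over $B(x_0,r_n)$ against $\lambda_\nu$ converges to $\nu_{x_0}^\infty$, which forces $\sigma_y^\infty=\nu_{x_0}^\infty$ for $\lambda_\sigma$-a.e.\ $y$. \emph{Step 5 (gradient structure of $\sigma$).} Run the diagonal argument: for each $n$ pick $j=j(n)$ large so that the generating sequence for $\nu$, blown up at scale $r_n$, is within $2^{-n}$ (in the determining metric) of $\sigma^{(r_n)}$; the resulting sequence of rescaled BV-derivatives generates $\sigma$, so $\sigma\in\mathbf{[G]}\Ybf^\sing_\loc$.

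The main obstacle is Step~4 together with the nontriviality assertion $\lambda_\sigma\in\Tan(\lambda_\nu^s,x_0)\setminus\{0\}$ in Step~3: one must be careful that the normalization constants $c_n=1/\lambda_\nu(B(x_0,r_n))$ do not make the oscillation/Lebesgue part blow up or survive in the limit. This is precisely where the distinction between $\lambda_\nu$ and $\lambda_\nu^s$ matters — at a $\lambda_\nu^s$-typical point the Lebesgue part $\frac{\di\lambda_\nu}{\di\Lcal^d}\Lcal^d$ is infinitesimally small compared to $\lambda_\nu(B(x_0,r_n))$, so after normalization the regular contribution vanishes and one genuinely lands in the singular class; making this quantitative (via the fact that $\frac{\di\Lcal^d}{\di\lambda_\nu}(x_0)=0$ $\lambda_\nu^s$-a.e.) is the technical heart of the argument. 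Everything else is a routine adaptation of the regular-point localization principle already available in the cited works, so the proof can largely be structured as "repeat the proof of Proposition~\ref{prop:localize_reg} with the normalization and test-function class changed as above."
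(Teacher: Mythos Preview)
Your proposal is correct and matches the approach the paper indicates: the paper does not actually prove this proposition but omits the proof, stating that it ``follows immediately from the singular localization principle in Section~3.3 of~\cite{Rind11PhD} or~\cite{Rind11LSIF}'' via ``a similar, yet more involved, blow-up principle as in the case of regular points.'' Your outline (good-point selection, rescaling, compactness, identification of components, diagonal argument for the gradient structure) is precisely that blow-up scheme, and the only cosmetic discrepancy is that the paper's displayed formula~\eqref{eq:sing_tangent_YM} uses cubes and the normalization $c_n = 1/\ddprn{\ONE_{Q(x_0,r_n)} \otimes \abs{\frarg},\nu}$ rather than balls and $1/\lambda_\nu(B(x_0,r_n))$; at $\lambda_\nu^s$-typical points these are asymptotically equivalent, so this does not affect the argument.
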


The proof of this result proceeds via a similar, yet more involved, blow-up principle as in the case of regular points, see~\eqref{eq:sing_tangent_YM}. We omit the proof since the result follows immediately from the singular localization principle in Section~3.3 of~\cite{Rind11PhD} or~\cite{Rind11LSIF}.

\section{Local characterization I: Regular points}  \label{sc:reg}

We first prove a local version of our characterization theorem in BV at regular points, that is for Young measures originating from the regular localization procedure of Proposition~\ref{prop:localize_reg}. This proof is based on the same principles as the one for classical Young measures in~\cite{KinPed91CYMG,KinPed94GYMG}. We still expose it in some detail here for the sake of completeness and also to give an overview of the general strategy, which will also be employed for singular points. Let
\[
  \Ebf^\reg := \setb{ \ONE \otimes h }{ \ONE \otimes h \in \Ebf(\Bbb^d;\R^{m \times d}) },
\]
and for $A_0 \in \R^{m \times d}$,
\begin{align*}
  \Ybf^\reg(A_0) &:= \setb{ \sigma \in \Ybf(\Bbb^d;\R^{m \times d}) }{ \text{$[\sigma] = A_0 \Lcal^d \restrict \Bbb^d$,} \\
  &\qquad\qquad \text{$\sigma_y, \sigma_y^\infty$ constant in $y$, $\lambda_\sigma = \alpha \Lcal^d \restrict \Bbb^d$ for some $\alpha \geq 0$} }, \\
  \GY^\reg(A_0) &:= \Ybf^\reg(A_0) \cap \GY(\Bbb^d;\R^{m \times d}) \\
  &\phantom{:}= \setb{ \sigma \in \Ybf^\reg(A_0) }{ \text{$\exists (v_j) \subset \BV(\Bbb^d;\R^m)$ with $Dv_j \toY \sigma$} }.
\end{align*}
In analogy to the terminology for classical Young measures, we call elements of $\Ybf^\reg(A_0)$ \term{homogeneous} Young measures.

Then, via the usual duality pairing $\ddprn{\frarg,\frarg}$, the space $\Ybf^\reg(A_0)$ can be considered as a part of the dual space to $\Ebf^\reg$. Moreover, the space of test functions $\Ebf^\reg$ is \emph{separating} for $\GY^\reg(A_0)$, that is, for $\sigma_1,\sigma_2 \in \Ybf^\reg(A_0)$,
\[
  \ddprb{f,\sigma_1} = \ddprb{f,\sigma_2}  \quad\text{for all $f \in \Ebf^\reg$}
  \qquad\text{implies}\qquad  \sigma_1 = \sigma_2,
\]
as can be easily checked.

\begin{lemma} \label{lem:GY_reg_properties}
Considered as a subset of the dual space $(\Ebf^\reg)^*$, the set $\GY^\reg(A_0)$ is weakly*-closed and convex.
\end{lemma}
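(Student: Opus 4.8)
The plan is to prove convexity and weak*-closedness separately, with closedness being the more substantial of the two.

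For convexity, I would argue directly by constructing generating sequences. Suppose $\sigma_1, \sigma_2 \in \GY^\reg(A_0)$ with generating sequences $(v_j^{(1)}), (v_j^{(2)}) \subset \BV(\Bbb^d;\R^m)$, i.e.\ $Dv_j^{(i)} \toY \sigma_i$, and let $\theta \in (0,1)$. I want to show $\theta\sigma_1 + (1-\theta)\sigma_2 \in \GY^\reg(A_0)$. Note first that the barycenter of this convex combination is still $A_0\,\Lcal^d\restrict\Bbb^d$, so we stay in $\Ybf^\reg(A_0)$. The standard device is to subdivide $\Bbb^d$ into two disjoint open subregions $U_1, U_2$ of relative Lebesgue measure $\theta$ and $1-\theta$ (for instance by slicing with a hyperplane), and on each $U_i$ use (a rescaled copy of) the approximation/averaging machinery --- Proposition \ref{prop:averaging} and Proposition \ref{prop:approximation} --- to paste together, on a fine partition of $U_i$, suitably scaled and translated copies of $v_j^{(i)}$, adjusting boundary values to match the affine function $x \mapsto A_0 x$ via Lemma \ref{lem:boundary_adjustment}. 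As the partition is refined and $j \to \infty$ along a diagonal sequence, the resulting gradients generate a Young measure whose action on $\ONE \otimes h$ is $\theta\dpr{h,(\sigma_1)_y} + (1-\theta)\dpr{h,(\sigma_2)_y}$ on the oscillation part, with the analogous formula for the concentration part; since $\Ebf^\reg$ is separating on $\Ybf^\reg(A_0)$, this identifies the generated Young measure as $\theta\sigma_1 + (1-\theta)\sigma_2$. The key point that makes the gluing legitimate is that the affine boundary condition $A_0 x$ is common to all pieces, so the glued competitors lie in $\BV(\Bbb^d;\R^m)$ (indeed in $W^{1,1}\cap C^\infty$ after the boundary adjustment of Lemma \ref{lem:boundary_adjustment}) with the correct total barycenter.

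For weak*-closedness, let $(\sigma_n) \subset \GY^\reg(A_0)$ with $\sigma_n \toweakstar \sigma$ in $(\Ebf^\reg)^*$. First one checks the limit is still a genuine Young measure in $\Ybf^\reg(A_0)$: the mass bound $\sup_n \ddpr{\ONE\otimes\abs{\frarg},\sigma_n} < \infty$ (which holds because testing against $\ONE \otimes \abs{\frarg} \in \Ebf^\reg$ converges) gives, by the compactness theorem in $\Ybf$, that a subsequence converges to some $\tilde\sigma \in \Ybf(\Bbb^d;\R^{m\times d})$; matching against the separating family shows $\tilde\sigma = \sigma$ and that $\sigma$ inherits homogeneity and barycenter $A_0\,\Lcal^d\restrict\Bbb^d$, so $\sigma \in \Ybf^\reg(A_0)$. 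Then one must produce a generating sequence of gradients for $\sigma$. The route is a diagonal argument: for each $n$ pick a generating sequence $(v_j^{(n)})_j$ for $\sigma_n$, which by Lemma \ref{lem:boundary_adjustment} may be taken in $(\Wrm^{1,1}\cap\Crm^\infty)(\Bbb^d;\R^m)$ with $v_j^{(n)}|_{\partial\Bbb^d} = A_0 x$; using Lemma \ref{lem:tensor_products_determine_YM} (the countable determining family $\{f_k\}$, which we may arrange to lie in $\Ebf^\reg$ after checking the construction respects the homogeneous setting) one selects, for each $n$, an index $j(n)$ so that $|\ddpr{f_k, \delta_{[Dv^{(n)}_{j(n)}]}} - \ddpr{f_k,\sigma_n}| < 1/n$ for $k \le n$, where $\delta_{[Dw]}$ denotes the elementary Young measure generated by $Dw$. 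Combined with $\ddpr{f_k,\sigma_n}\to\ddpr{f_k,\sigma}$, the sequence $w_n := v^{(n)}_{j(n)}$ satisfies $Dw_n \toY \sigma$, and $(w_n)$ is norm-bounded because the mass $\ddpr{\ONE\otimes\abs{\frarg},\sigma_n}$ is bounded and the $L^1$-norms are controlled through the common affine boundary datum. Hence $\sigma \in \GY^\reg(A_0)$.

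The main obstacle is the weak*-closedness, specifically ensuring that the diagonal extraction actually yields a \emph{norm-bounded} generating sequence of BV-gradients rather than merely a sequence whose generated Young measure agrees with $\sigma$ on test functions: one needs the uniform mass bound from weak* convergence to translate into a uniform $\BV$-bound on the $w_n$, which is where the common affine boundary condition (via Lemma \ref{lem:boundary_adjustment} and a Poincaré-type estimate) is essential. A secondary technical point is verifying that the countable determining family of Lemma \ref{lem:tensor_products_determine_YM} can be taken inside $\Ebf^\reg$ --- i.e.\ that it suffices to test against $x$-independent integrands --- which is exactly the assertion that $\Ebf^\reg$ is separating for $\Ybf^\reg(A_0)$, already noted in the text.
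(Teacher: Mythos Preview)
Your plan is essentially the same as the paper's (which defers to the more detailed proof of Lemma~\ref{lem:GY_sing_properties}): convexity via a splitting/gluing construction exploiting the common affine boundary datum $A_0 x$, and closedness via a diagonal extraction controlled by the countable determining family of Lemma~\ref{lem:tensor_products_determine_YM}. The convexity argument is in fact simpler here than in the singular case because elements of $\Ybf^\reg(A_0)$ are already homogeneous with affine underlying deformation, so the preliminary reduction to averaged measures via Proposition~\ref{prop:approximation} (which the paper needs in the singular setting) is unnecessary.

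There is, however, one genuine subtlety in your closedness argument. You begin with ``let $(\sigma_n) \subset \GY^\reg(A_0)$ with $\sigma_n \toweakstar \sigma$'', which only establishes \emph{sequential} weak* closedness. Since $\GY^\reg(A_0)$ is unbounded in $(\Ebf^\reg)^*$ (the mass $\alpha$ of $\lambda_\sigma$ is unconstrained), the weak* topology restricted to it is not metrizable, and sequential closedness does not automatically give topological weak* closedness---yet it is the latter that the Hahn--Banach separation in the proof of Proposition~\ref{prop:GY_reg_sufficient} requires. The paper's remedy (in the proof of Lemma~\ref{lem:GY_sing_properties}) is to start instead with $\sigma$ in the \emph{topological} weak* closure and then \emph{construct} an approximating sequence by hand: for each $j$, the basic weak* neighbourhood
\[
  \setb{ e^* }{ \absn{\ddprn{f_k, e^*} - \ddprn{f_k,\sigma}} \leq 1/j \text{ for } k \leq j,\ \absn{\ddprn{\ONE\otimes\abs{\frarg}, e^*} - \ddprn{\ONE\otimes\abs{\frarg},\sigma}} \leq 1/j }
\]
must meet $\GY^\reg(A_0)$, yielding $\sigma_j$. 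This is precisely where the \emph{countability} of the determining family is essential. From that point your diagonal argument goes through verbatim. Alternatively one could argue via Krein--\v{S}mulian together with the separability of $\Ebf^\reg$, but the direct construction is cleaner and is what the paper does.
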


We now prove a local characterization result:

\begin{proof}
Both assertions follow similarly to Lemma~\ref{lem:GY_sing_properties} below (most arguments are in fact easier), once observing that here we even have an affine underlying deformation, which is admissible in the averaging principle, Proposition~\ref{prop:averaging}.
\end{proof}

\begin{proposition} \label{prop:GY_reg_sufficient}
Let $\sigma \in \Ybf^\reg(A_0)$ for some $A_0 \in \R^{m \times d}$. If $\sigma$ satisfies the regular Jensen-type inequality
\begin{equation} \label{eq:charact_cond_reg}
  h(A_0) = h \biggl( \dprb{\id,\sigma_y} + \dprb{\id,\sigma_y^\infty} \, \frac{\di \lambda_\sigma}{\di \Lcal^d}(y) \biggr)
  \leq \dprb{h,\sigma_y} + \dprb{h^\#,\sigma_y^\infty} \, \frac{\di \lambda_\sigma}{\di \Lcal^d}(y)
\end{equation}
for all quasiconvex $h \in \Crm(\R^{m \times d})$ with linear growth at infinity, then $\sigma \in \GY^\reg(A_0)$.
\end{proposition}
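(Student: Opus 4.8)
The plan is to follow the Kinderlehrer--Pedregal scheme from~\cite{KinPed91CYMG}: argue by contradiction through a Hahn--Banach separation in the dual pair $\Ebf^\reg$, $(\Ebf^\reg)^*$, and then feed the separating integrand into the Jensen-type inequality~\eqref{eq:charact_cond_reg} after passing to its quasiconvex envelope. First I would note that $\GY^\reg(A_0)$ is non-empty --- it contains the homogeneous gradient Young measure with underlying deformation $x \mapsto A_0x$, e.g.\ the one furnished by Corollary~\ref{cor:Riemann_Lebesgue} applied on $\Bbb^d$ --- and, by Lemma~\ref{lem:GY_reg_properties}, a weakly*-closed convex subset of $(\Ebf^\reg)^*$. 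Assuming for contradiction that $\sigma \notin \GY^\reg(A_0)$, and recalling that a continuous linear functional on $(\Ebf^\reg)^*$ with its weak* topology is evaluation against some element of $\Ebf^\reg$, the geometric Hahn--Banach theorem yields $\ONE \otimes h \in \Ebf^\reg$ and $c \in \R$ with
\[
  \ddprb{\ONE \otimes h,\sigma} \;<\; c \;\leq\; \ddprb{\ONE \otimes h,\mu}
  \qquad \text{for all } \mu \in \GY^\reg(A_0).
\]

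To bound the right-hand side from above I would insert the explicit homogeneous gradient Young measures provided by Corollary~\ref{cor:Riemann_Lebesgue} (with both the domain of the deformation and the target domain $D$ taken to be $\Bbb^d$), using the deformations $u(x) = A_0x + \psi(x)$ with $\psi \in \Crm_c^\infty(\Bbb^d;\R^m)$. Each such $\mu_\psi$ carries no concentration and has barycenter $A_0\Lcal^d \restrict \Bbb^d$ (since $\int_{\Bbb^d} \nabla\psi\dd y = 0$), hence lies in $\Ybf^\reg(A_0) \cap \GY(\Bbb^d;\R^{m \times d}) = \GY^\reg(A_0)$, and one computes $\ddprb{\ONE \otimes h,\mu_\psi} = \abs{\Bbb^d} \dashint_{\Bbb^d} h(A_0 + \nabla\psi)\dd y$. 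Taking the infimum over $\psi$ and invoking the envelope formula~\eqref{eq:Qh_inf_formula} gives
\[
  c \;\leq\; \inf_{\mu \in \GY^\reg(A_0)} \ddprb{\ONE \otimes h,\mu} \;\leq\; \abs{\Bbb^d}\, Qh(A_0).
\]
In particular $Qh(A_0) > -\infty$, so by the dichotomy recalled after~\eqref{eq:Qh_inf_formula} the function $Qh$ is a genuine, real-valued quasiconvex function; being bounded above by $h$ it is also globally Lipschitz and of linear growth, hence an admissible test function in~\eqref{eq:charact_cond_reg}.

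For the matching lower bound I would unfold the left-hand side: as $\sigma \in \Ybf^\reg(A_0)$ is homogeneous with $\lambda_\sigma = \alpha\Lcal^d \restrict \Bbb^d$ (which charges no part of $\partial \Bbb^d$) and $[\sigma] = A_0\Lcal^d \restrict \Bbb^d$, one has
\[
  \ddprb{\ONE \otimes h,\sigma} = \abs{\Bbb^d}\bigl( \dprb{h,\sigma_y} + \alpha\, \dprb{h^\infty,\sigma_y^\infty} \bigr),
  \qquad \dprb{\id,\sigma_y} + \alpha\, \dprb{\id,\sigma_y^\infty} = A_0.
\]
Since $Qh \leq h$ pointwise, $\dprb{h,\sigma_y} \geq \dprb{Qh,\sigma_y}$; and since $\ONE \otimes h \in \Ebf^\reg$ the strong recession function $h^\infty$ exists, so $h^\# = h^\infty$, and then $Qh \leq h$ forces $(Qh)^\# \leq h^\# = h^\infty$ on $\partial \Bbb^{m \times d}$, whence $\dprb{h^\infty,\sigma_y^\infty} \geq \dprb{(Qh)^\#,\sigma_y^\infty}$. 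Feeding these two estimates into the Jensen-type inequality~\eqref{eq:charact_cond_reg} for the quasiconvex function $Qh$ gives $\dprb{h,\sigma_y} + \alpha\, \dprb{h^\infty,\sigma_y^\infty} \geq Qh(A_0)$, i.e.\ $\ddprb{\ONE \otimes h,\sigma} \geq \abs{\Bbb^d}\, Qh(A_0)$. Together with the preceding display this produces the contradiction $\ddprb{\ONE \otimes h,\sigma} < c \leq \abs{\Bbb^d}\, Qh(A_0) \leq \ddprb{\ONE \otimes h,\sigma}$, so $\sigma \in \GY^\reg(A_0)$.

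The bookkeeping --- non-emptiness, the barycenter computation, the monotonicity of recession functions under $h \mapsto Qh$, and the linear growth of $Qh$ --- is routine. The step I expect to require genuine care, and the reason this argument is more self-contained than the one in~\cite{KriRin10CGGY}, is precisely that \emph{no relaxation theorem} is invoked: the only nontrivial input about $\GY^\reg(A_0)$ beyond Lemma~\ref{lem:GY_reg_properties} is the \emph{elementary} upper bound $\inf_\mu \ddprb{\ONE \otimes h,\mu} \leq \abs{\Bbb^d}\, Qh(A_0)$, which comes straight from the definition of the quasiconvex envelope through the test Young measures of Corollary~\ref{cor:Riemann_Lebesgue}; one never has to identify this infimum (which would be a genuine linear-growth relaxation statement and would additionally have to absorb possible concentration effects). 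A second delicate point is that $Qh$ need not admit a \emph{strong} recession function, so throughout the lower estimate one must work with the generalized recession $(Qh)^\#$ --- exactly the object appearing in~\eqref{eq:charact_cond_reg} --- and compare it with $h^\infty$ via the identity $h^\# = h^\infty$ valid for $\ONE \otimes h \in \Ebf^\reg$.
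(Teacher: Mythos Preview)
Your argument is correct and follows the same Hahn--Banach/quasiconvex-envelope strategy as the paper, but with one genuine streamlining. The paper perturbs the separating integrand to $g_\eps = f_H + \eps\abs{\frarg}$, then uses the $\eps$-coercivity to show that a minimizing sequence for $Qg_\eps(A_0)$ is $\Wrm^{1,1}$-bounded and hence generates (after averaging) a Young measure $\bar\mu \in \GY^\reg(A_0)$ \emph{realizing} $\ddpr{g_\eps,\bar\mu} = Qg_\eps(A_0)\omega_d$; this realized value is then compared with $\kappa$ and $\eps$ is sent to zero. You bypass all of this by observing that one never needs to realize the infimum: each test measure $\mu_\psi$ from Corollary~\ref{cor:Riemann_Lebesgue} already lies in $\GY^\reg(A_0)$, so $c \leq \ddpr{\ONE\otimes h,\mu_\psi} = \omega_d \dashint_{\Bbb^d} h(A_0+\nabla\psi)$ for every $\psi$, whence $c \leq \omega_d\,Qh(A_0)$ directly. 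The Jensen step with $Qh$ then closes the contradiction without any $\eps$-limit.

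What each approach buys: your version is shorter and avoids the coercivity/boundedness detour entirely in the regular case. The paper's $\eps$-perturbation, however, is deliberately set up as a rehearsal for the singular case (Proposition~\ref{prop:GY_sing_rank_1_sufficient}), where the test functions depend on $x$, the domain is sliced, and one genuinely needs uniform $\Wrm^{1,1}$-bounds on recovery sequences across slices---there the $\eps\abs{A}$ term is not optional. So your simplification is local to Proposition~\ref{prop:GY_reg_sufficient} and does not transplant to Section~\ref{sc:sing}, but within its scope it is a clean improvement.
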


\begin{proof}
\proofstep{Step 1.}
From Lemma~\ref{lem:GY_reg_properties} we know that the set $\GY^\reg(A_0)$ is weakly*-closed and convex (considered as a subset of $(\Ebf^\reg)^*$). By the Hahn--Banach Theorem we therefore only need to show that for every weakly*-closed affine half-space $H$ in $(\Ebf^\reg)^*$ with $\GY^\reg(A_0) \subset H$, it holds that $\sigma \in H$. Fix such a half-space $H$ and observe that since $H$ is weakly*-closed, there exists a weakly* continuous functional $G_H \in (\Ebf^\reg)^{**}$ and $\kappa \in \R$ such that
\[
  H = \setb{ e^* \in (\Ebf^\reg)^* }{ G_H(e^*) \geq \kappa }.
\]
From standard arguments in Functional Analysis, see for example Theorem~V.1.3 in~\cite{Conw90CFA}, we may infer that $G_H$ is an evaluation functional, $G_H(e^*) = e^*(f_H)$ for an $f_H \in \Ebf^\reg$. In particular, as $\GY^\reg(A_0) \subset H$, we have
\[
  \ddprb{f_H,\mu} \geq \kappa  \qquad\text{for all $\mu \in \GY^\reg(A_0)$.}
\]
In the remainder of the proof we will show $\ddprn{f_H,\sigma} \geq \kappa$, whereby $\sigma \in H$.

\proofstep{Step 2.}
Let $\eps > 0$. For
\[
  g_\eps(A) := f_H(A) + \eps \abs{A},  \qquad A \in \R^{m \times d},
\]
we immediately see $g_\eps \in \Ebf^\reg$. Next, we observe that $Qf_H(A_0) > - \infty$, where $Qf_H$ is the quasiconvex envelope of $f_H$. Otherwise, there would exist $w \in \Wrm_{A_0 x}^{1,\infty}(\Bbb^d;\R^m)$, that is $w \in \Wrm^{1,\infty}(\Bbb^d;\R^m)$ and $w(x) = A_0 x$ for all $x \in \partial \Bbb^d$, such that
\[
  \int_{\Bbb^d} f_H(\nabla w(y)) \dd y  <  \kappa,
\]
and using the generalized Riemann--Lebesgue Lemma, Corollary~\ref{cor:Riemann_Lebesgue}, we could infer the existence of $\mu \in \GY^\reg(A_0)$ with $\ddprn{f_H,\mu} < \kappa$, a contradiction. Also, by the formula~\eqref{eq:Qh_inf_formula} for the quasiconvex envelope in conjunction with the (classical) Jensen inequality,
\[
  Qg_\eps(A_0) \geq Qf_H(A_0) + \eps \abs{A_0} > -\infty.
\]
Since it is not identically $-\infty$, the function $Qg_\eps$ is quasiconvex, this is proved for example in the appendix of~\cite{KinPed91CYMG}. Next, from $Qg_\epsilon \leq g_\epsilon$ we infer the upper bound $Qg_\epsilon(A) \leq (M+1)(1+\abs{A})$. Because $Qg_\epsilon$ is separately convex and finite by the above reasoning, Lemma~2.5 in~\cite{Kris99LSSW} implies that also $\abs{Qg_\epsilon(A)} \leq \tilde{M}(1+\abs{A})$ for some $\tilde{M} > 0$ that depends on the dimensions $m,d$, and the growth bound $M+1$.

Using $g_\eps \geq Qg_\eps$, $g_\eps^\infty \geq (Qg_\eps)^\#$, and employing the assumption~\eqref{eq:charact_cond_reg} we get
\begin{equation} \label{eq:reg_est_2}
  \ddprb{g_\eps,\sigma} \geq \int_{\Bbb^d} \dprb{Qg_\eps,\sigma_y} + \dprb{(Qg_\eps)^\#,\sigma_y^\infty} \, \frac{\di \lambda_\sigma}{\di \Lcal^d}(y) \dd y \geq Qg_\eps(A_0) \omega_d.
\end{equation}

The formula~\eqref{eq:Qh_inf_formula} for the quasiconvex envelope yields a sequence $(w_j) \subset \Wrm_{A_0 x}^{1,\infty}(\Bbb^d;\R^m)$ with
\[
  \int_{\Bbb^d} g_\eps(\nabla w_j(y)) \dd y  \quad\to\quad  Qg_\eps(A_0) \omega_d.
\]
Moreover, by quasiconvexity of $Qf_H$ and possibly discarding leading elements of the sequence $(w_j)$,
\begin{align*}
  Qg_\eps(A_0) + 1 &\geq \dashint_{\Bbb^d} g_\eps(\nabla w_j(y)) \dd y \geq \dashint_{\Bbb^d} Qf_H(\nabla w_j(y)) + \eps \abs{\nabla w_j(y)} \dd y \\
  &\geq Qf_H(A_0) + \frac{\eps}{\omega_d} \normb{\nabla w_j}_{\Lrm^1(\Bbb^d;\R^{m \times d})}.
\end{align*}
Hence, by the Poincar\'{e}--Friedrichs inequality the sequence $(w_j)$ is uniformly bounded in $\Wrm^{1,1}(\Bbb^d;\R^m)$ and we may assume that $\nabla w_j \toY \mu \in \GY(\Bbb^d;\R^{m \times d})$. Now apply the averaging principle, Proposition~\ref{prop:averaging} to replace $\mu$ by its averaged version $\bar{\mu} \in \GY^\reg(A_0)$. From the properties of $\bar{\mu}$ and the definition of $(w_j)$ we get
\[
  \ddprb{g_\eps,\bar{\mu}} = \ddprb{g_\eps,\mu} = \lim_{j \to \infty} \int_{\Bbb^d} g_\eps(\nabla w_j(y)) \dd y = Qg_\eps(A_0) \omega_d.
\]
Combining with~\eqref{eq:reg_est_2}, we arrive at
\begin{align*}
  \ddprb{f_H,\sigma} &= \ddprb{g_\eps,\sigma} - \eps \ddprb{\ONE \otimes \abs{\frarg},\sigma} \\
  &\geq Qg_\eps(A_0) \omega_d - \eps \ddprb{\ONE \otimes \abs{\frarg},\sigma} \\
  &= \ddprb{g_\eps,\bar{\mu}} - \eps \ddprb{\ONE \otimes \abs{\frarg},\sigma} \\
  &\geq \ddprb{f_H,\bar{\mu}} - \eps \ddprb{\ONE \otimes \abs{\frarg},\sigma} \\
  &\geq \kappa - \eps \ddprb{\ONE \otimes \abs{\frarg},\sigma},
\end{align*}
where the last estimate follows from $\bar{\mu} \in \GY^\reg(A_0) \subset H$. Now let $\eps \todown 0$ to conclude $\ddprb{f_H,\sigma} \geq \kappa$, whence $\sigma \in H$.
\end{proof}

\section{Local characterization II: Singular points}  \label{sc:sing}

Now let $x_0 \in \Omega$ be a singular point of the Young measure $\nu \in \Ybf(\Omega;\R^{m \times d})$ for which we want to show that it is indeed a \emph{gradient} Young measure, and set
\[
  A_0 := \dprb{\id,\nu_{x_0}^\infty}.
\]
We need to distinguish two cases: depending on whether $A_0 = a \otimes \xi$ for $a \in \R^m \setminus \{0\}$ and $\xi \in \Sbb^d$ (i.e.\ $\rank A_0 = 1$), or $A_0 = 0$, we need to work in a slightly different setup. From Alberti's Rank-One Theorem~\cite{Albe93ROPD} we will infer that these two cases are the only ones we need to consider. In the first case every singular tangent Young measure $\sigma$ has a concentration measure $\lambda_\sigma$ that is \term{one-directional in direction $\xi$} (also called \term{$\xi$-directional} in the following), meaning that for any $v \perp \xi$ it holds that $\lambda_\sigma(\frarg + v) = \lambda_\sigma$. In the second case, $\lambda_\sigma$ is arbitrary, but since the underlying deformation has zero boundary values, this case presents no added difficulty.

\subsection{The case $A_0 = a \otimes \xi$.} \label{ssc:sing_rank_one}

Assume that $A_0 = a \otimes \xi$ for $a \in \R^m \setminus \{0\}$ and $\xi \in \Sbb^d$. In all of the following, $Q_\xi$ denotes the (rotated) cube with $\abs{Q_\xi} = 1$ and with one face orthogonal to $\xi \in \Sbb^{d-1}$. Define the space
\begin{align*}
  \Ebf^\sing(\xi) := \setb{ &f \in \Ebf^\sing(Q_\xi;\R^{m \times d}) }{ \text{$f(y,\frarg) = f(y \cdot \xi,\frarg)$} }.
\end{align*}
Here and in the following, $f(y,\frarg) = f(y \cdot \xi,\frarg)$ means that $f$ only depends on $y \cdot \xi$. Notice that $f \in \Ebf^\sing(\xi)$ entails that $f(y,\frarg)$ is positively $1$-homogeneous for all $y \in Q_\xi$, see~\eqref{eq:Esing}. We also set (cf.~\eqref{eq:Ysing})
\begin{align*}
  \Ybf^\sing(a \otimes \xi) &:= \setb{ \sigma \in \Ybf^\sing(Q_\xi;\R^{m \times d}) }{ \text{$[\sigma] = (a \otimes \xi) \, \lambda_\sigma$, $\sigma_y^\infty = \sigma_{y \cdot \xi}^\infty$ and} \\
  &\qquad\qquad\qquad\qquad\qquad\qquad\quad\; \text{$\lambda_\sigma$ is $\xi$-directional} }, \\
  \GY^\sing(a \otimes \xi) &:= \Ybf^\sing(a \otimes \xi) \cap \GY^\sing(Q_\xi;\R^{m \times d}), \\
  \mathbf{[G]}\Ybf_0^\sing(a \otimes \xi) &:=\setb{ \sigma \in \mathbf{[G]}\Ybf^\sing(a \otimes \xi) }{ \lambda_\sigma(\partial Q_\xi) = 0 }.
\end{align*}
Notice that for all $\sigma \in \GY_0^\sing(a \otimes \xi)$ any underlying deformation $u \in \BV(Q_\xi;\R^m)$, i.e.\ with $[\sigma] = Du$, is of the form $u(y) = u_0 + a \psi(y \cdot \xi)$ for some $u_0 \in \R^m$ and $\psi \in \BV(0,1)$. For weak* convergence in $\mathbf{[G]}\Ybf_0^\sing(a \otimes \xi)$ we only require convergence for positively $1$-homogeneous functions $f = f^\infty$. We also observe that
\[
  (a \otimes \xi) \, \lambda_\sigma = [\sigma] = \dprb{\id,\sigma_y^\infty} \, \lambda_\sigma(y),
\]
whereby $\dpr{\id,\sigma_y^\infty} = a \otimes \xi$ is independent of $y$ ($\lambda_\sigma$-a.e.).

Then, via the usual duality pairing $\ddprn{\frarg,\frarg}$, the space $\Ybf^\sing(a \otimes \xi)$ is considered a part of the dual space to $\Ebf^\sing(\xi)$. Again, the space of test functions $\Ebf^\sing(\xi)$ is separating for $\Ybf^\sing(a \otimes \xi)$, that is, if for $\sigma_1,\sigma_2 \in \Ybf^\sing(a \otimes \xi)$ it holds that $\ddpr{f,\sigma_1} = \ddpr{f,\sigma_2}$ for all $f \in \Ebf^\sing(\xi)$, then $\sigma_1 = \sigma_2$. This can be checked by considering elements in $\Ebf^\sing(\xi)$ of the form $f(x,A) := \phi(x)h(A)$ with $\phi \in \Crm(\cl{Q_\xi})$ satisfying $\phi(x) = \phi(x \cdot \xi)$, and $h \in \Crm(\R^{m \times d})$ positively $1$-homogeneous (use the $\xi$-directionality of $y \mapsto \sigma_y^\infty$ and $\lambda_\sigma$).

\begin{lemma} \label{lem:GY_sing_properties}
Considered as subsets of the dual space $\Ebf^\sing(\xi)^*$, the set $\GY^\sing(a \otimes \xi)$ is weakly*-closed and the set $\GY_0^\sing(a \otimes \xi)$ is convex.
\end{lemma}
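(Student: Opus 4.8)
The plan is to prove the two assertions separately, in both cases combining the compactness and metrization theory of singular Young measures with the averaging and approximation principles; the regular analogue, Lemma~\ref{lem:GY_reg_properties}, then follows along the same lines with the affine deformation $x \mapsto A_0 x$ in place of the $\xi$-directional one, which is what makes it easier. For \textbf{weak*-closedness} of $\GY^\sing(a\otimes\xi)$, start from $\sigma_j\toweakstar\sigma$ with $(\sigma_j)\subset\GY^\sing(a\otimes\xi)$: testing against $\ONE\otimes\abs{\frarg}\in\Ebf^\sing(\xi)$ shows that $\lambda_{\sigma_j}(\cl{Q_\xi})=\ddprb{\ONE\otimes\abs{\frarg},\sigma_j}$ is a convergent, hence bounded, numerical sequence, so by the compactness theorem for singular Young measures a (non-relabeled) subsequence converges weakly* in $\Ybf^\sing(Q_\xi;\R^{m\times d})$ to some $\tilde\sigma$. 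The three structural conditions defining $\Ybf^\sing(a\otimes\xi)$ — the barycenter identity $[\,\frarg\,]=(a\otimes\xi)\lambda_{(\frarg)}$, the $(y\cdot\xi)$-dependence of $\sigma^\infty_y$, and the $\xi$-directionality of $\lambda_\sigma$ — survive the weak* limit (the barycenter is weakly* continuous, the other two are closed linear constraints), so $\tilde\sigma\in\Ybf^\sing(a\otimes\xi)$; since $\Ebf^\sing(\xi)$ is separating for $\Ybf^\sing(a\otimes\xi)$ and $\ddpr{f,\sigma_j}\to\ddpr{f,\tilde\sigma}$ for every $f\in\Ebf^\sing(\xi)$, we may identify $\sigma=\tilde\sigma$. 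It remains to produce a $\BV$-generating sequence for $\sigma$: by a diagonal argument, using the ($\xi$-directional, singular) boundary-adjustment Lemma~\ref{lem:boundary_adjustment} to obtain norm-bounded generating sequences $(v_k^j)_k$ with $Dv_k^j\toY\sigma_j$ and prescribed $\xi$-directional traces, and the countable determining family of Lemma~\ref{lem:tensor_products_determine_YM} to metrize the weak* topology on norm-bounded subsets of $\Ybf^\sing(Q_\xi;\R^{m\times d})$, one extracts $w_j:=v_{k(j)}^j$ with $Dw_j\toY\sigma$; the sequence $(w_j)$ is $\BV$-bounded because its derivative masses converge to (hence are controlled by) the bounded quantities $\lambda_{\sigma_j}(\cl{Q_\xi})$, and its $\Lrm^1$-norms are controlled by the Poincar\'e--Friedrichs inequality with the prescribed traces. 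Hence $\sigma\in\GY^\sing(Q_\xi;\R^{m\times d})\cap\Ybf^\sing(a\otimes\xi)=\GY^\sing(a\otimes\xi)$.

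For \textbf{convexity} of $\GY_0^\sing(a\otimes\xi)$, fix $\sigma^0,\sigma^1\in\GY_0^\sing(a\otimes\xi)$ and $\theta\in(0,1)$, and realize $\sigma:=\theta\sigma^0+(1-\theta)\sigma^1$ as a genuine singular Young measure via $\lambda_\sigma:=\theta\lambda_{\sigma^0}+(1-\theta)\lambda_{\sigma^1}$ together with the $\lambda_\sigma$-weighted average $\sigma^\infty_y:=\theta\frac{\di\lambda_{\sigma^0}}{\di\lambda_\sigma}(y)\,\sigma^{0,\infty}_y+(1-\theta)\frac{\di\lambda_{\sigma^1}}{\di\lambda_\sigma}(y)\,\sigma^{1,\infty}_y$ of the two concentration-angle measures. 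One checks at once that $\sigma\in\Ybf_0^\sing(a\otimes\xi)$ — the $(y\cdot\xi)$-dependence of $\sigma^\infty_y$ is inherited via the densities $\di\lambda_{\sigma^i}/\di\lambda_\sigma$, which are functions of $y\cdot\xi$ because the $\lambda_{\sigma^i}$ are $\xi$-directional, and $\lambda_\sigma(\partial Q_\xi)=0$ because each $\lambda_{\sigma^i}(\partial Q_\xi)=0$ — so the task reduces to building a $\BV$-generating sequence for $\sigma$. I would proceed \`a la Kinderlehrer--Pedregal: first approximate $\sigma$ by the layerings $\sigma_k=\sum_l\overline{\sigma\restrict C_{kl}}$ of the Approximation Proposition~\ref{prop:approximation}, with the partition sets $C_{kl}$ chosen as slabs orthogonal to $\xi$ (hence cuboids with a face normal to $\xi$, compatible with the $\xi$-directional structure). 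On each slab, $\overline{\sigma\restrict C_{kl}}=\theta\,\overline{\sigma^0\restrict C_{kl}}+(1-\theta)\,\overline{\sigma^1\restrict C_{kl}}$ is a convex combination of two \emph{homogeneous} gradient singular Young measures (gradient because averaging preserves membership in $\GY^\sing$, Proposition~\ref{prop:averaging}) whose barycenters $t_0(a\otimes\xi)$ and $t_1(a\otimes\xi)$ are parallel; to generate it I would subdivide the slab into small cubes, thread a continuous piecewise-affine $\xi$-directional staircase with slope $t_0\,a$ on a $\theta$-fraction of the cubes and $t_1\,a$ on the remainder (so that the mean slope equals the prescribed $(\theta t_0+(1-\theta)t_1)\,a$), superimpose on each cube a zero-trace perturbation reproducing the microstructure of $\overline{\sigma^0\restrict C_{kl}}$, resp.\ $\overline{\sigma^1\restrict C_{kl}}$ (such perturbations exist by Lemma~\ref{lem:boundary_adjustment}, and homogeneity is restored through the averaging principle, Proposition~\ref{prop:averaging} case~(ii), which applies since the underlying deformation is $\xi$-directional on a cuboid with face normal $\xi$), and glue. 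Patching these slab-wise sequences, again via Lemma~\ref{lem:boundary_adjustment}, gives $\sigma_k\in\GY^\sing(Q_\xi;\R^{m\times d})$, and since $\sigma_k\toweakstar\sigma$ while $\GY^\sing(Q_\xi;\R^{m\times d})$ is weakly*-closed (by the first part, applied without the linear constraints), we conclude $\sigma\in\GY^\sing(Q_\xi;\R^{m\times d})\cap\Ybf_0^\sing(a\otimes\xi)=\GY_0^\sing(a\otimes\xi)$.

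The \textbf{main obstacle} is the generating-sequence construction in the convexity part, and specifically its compatibility with the $\xi$-directional \emph{profile} of $\lambda_\sigma=\theta\lambda_{\sigma^0}+(1-\theta)\lambda_{\sigma^1}$. In the regular case the averaging principle flattens the oscillation to a constant and $\lambda$ to a multiple of Lebesgue measure, so positions play no role; here $\lambda_\sigma$ genuinely varies with $y\cdot\xi$, which forces one to localize first (via the Approximation Proposition) before invoking any averaging, and then to track carefully how the unnormalized singular concentration rescales under the subdivision into small cubes. Moreover the two homogeneous pieces over a slab have \emph{different} (though parallel) barycenters, so their $\xi$-directional microstructures cannot be glued with matching traces directly but must be threaded onto a continuous staircase, and one must verify that this staircase, together with the zero-trace microstructural perturbations, is compatible with the traces on $\partial C_{kl}$ prescribed by $\overline{\sigma\restrict C_{kl}}$ and with the overall concentration budget. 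Making all of these fit together uniformly is the technical heart of the proof; once it is in place, the weak*-closedness established in the first part closes the argument.
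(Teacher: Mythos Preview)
Your proposal is correct and follows essentially the same approach as the paper: for closedness you use compactness plus a diagonal extraction of generating sequences controlled by the countable determining family, and for convexity you reduce via the Approximation Proposition to homogeneous pieces on slabs, build a $\theta$-splitting with a $\xi$-directional staircase to absorb the trace mismatch, average, glue, and close up using the first part. One small technical point: you open the closedness argument with ``start from $\sigma_j\toweakstar\sigma$'', which presupposes that the weak* closure is sequentially accessible; the paper handles this explicitly by taking $\sigma$ in the weak* closure and using the countable determining family $\{f_k\}$ to manufacture the approximating sequence $(\sigma_j)$ --- a routine step, but worth making explicit.
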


\begin{proof}
\proofstep{Weak* closedness of $\GY^\sing(a \otimes \xi)$:}
Let $\sigma$ be in the weak* closure of $\GY^\sing(a \otimes \xi)$. Take a countable set $\{f_k\} = \{\phi_k \otimes h_k\} \subset \Ebf^\sing(\xi)$ that determines singular Young measures; this can be constructed by a procedure analogous to the one in Lemma~\ref{lem:tensor_products_determine_YM}. Then, for each $j \in \N$ there exists $\sigma_j \in \GY^\sing(a \otimes \xi)$ such that
\[
  \absb{\ddprb{f_k,\sigma_j} - \ddprb{f_k,\sigma}} \leq \frac{1}{j}  \qquad\text{and}\qquad
  \absb{\ddprb{\ONE \otimes \abs{\frarg},\sigma_j} - 
  \ddprb{\ONE \otimes\abs{\frarg},\sigma}} \leq \frac{1}{j}
\]
whenever $k \leq j$, whereby $\sigma_j \toweakstar \sigma$ in $\GY^\sing(a \otimes \xi)$. Moreover, since $\ONE \otimes \abs{\frarg} \in \Ebf^\sing(\xi)$, a subsequence converges weakly* to a limit in $\Ybf(Q_\xi;\R^{m \times d})$, which must be $\sigma$ because $\Ebf^\sing(\xi) \subset \Ebf(Q_\xi;\R^{m \times d})$. Thus, also $\sigma_j \toweakstar \sigma$ in $\Ybf(Q_\xi;\R^{m \times d})$.  Clearly, $\sigma_y = \delta_0$ for $\Lcal^d$-almost every $y \in Q_\xi$. Since the Young measure convergence implies the convergence of the barycenters (as measures) and $\lambda_{\sigma_j} \toweakstar \lambda_\sigma$ (test with $h(A) := \abs{A}$), we immediately get $[\sigma] = (a \otimes \xi) \lambda_\sigma$ as well.

For all $\phi \otimes h \in \Ebf(Q_\xi;\R^{m \times d})$ with $\supp \phi \subset\subset Q_\xi$ and all sufficiently small $\eta \perp \xi$, we have by the translation-invariance of $y \mapsto (\sigma_j)_y^\infty$ and $\lambda_{\sigma_j}$ with respect to directions orthogonal to $\xi$,
\begin{align*}
  \int_{Q_\xi} \phi(y+\eta) \dprb{h,\sigma_y^\infty} \dd \lambda_\sigma(y)
  &= \lim_{j\to\infty} \int_{Q_\xi} \phi(y+\eta) \dprb{h,(\sigma_j)_y^\infty} \dd \lambda_{\sigma_j}(y) \\
  &= \lim_{j\to\infty} \int_{Q_\xi} \phi(y) \dprb{h,(\sigma_j)_y^\infty} \dd \lambda_{\sigma_j}(y) \\
  &= \int_{Q_\xi} \phi(y) \dprb{h,\sigma_y^\infty} \dd \lambda_\sigma(y).
\end{align*}
Hence, varying $\phi$ and $h$, we may conclude that also $y \mapsto \sigma_y^\infty$ and $\lambda_\sigma$ are $\xi$-directional.

It remains to show that $\sigma$ can be generated by a sequence of gradients. Since all the $\sigma_j$ lie in $\GY^\sing(a \otimes \xi)$, we know that for each $j \in \N$ there exists $u_j \in \Wrm^{1,1}(Q_\xi;\R^m)$ with
\[
  \absBB{\int_Q f_k(x,\nabla u_j(x)) \dd x - \ddprb{f_k,\sigma_j}} \leq \frac{1}{j} 
\]
and
\[
  \absb{\norm{\nabla u_j}_{\Lrm^1(Q_\xi;\R^{m \times d})} - 
  \ddprb{\ONE \otimes\abs{\frarg},\sigma_j}} \leq \frac{1}{j},
\]
again for all $k \leq j$. Thus, the gradients $(\nabla u_j)$ are uniformly bounded in $\Lrm^1(Q_\xi;\R^{m \times d})$ and we may add a constant to every $u_j$ and employ Poincar\'{e}'s inequality to make the sequence $(u_j)$ uniformly bounded in $\Wrm^{1,1}(Q_\xi;\R^m)$. Therefore, there exists a subsequence (not relabeled) with $\nabla u_j \toY \mu \in \GY(Q_\xi;\R^{m \times d})$. The construction entails $\ddpr{f_k,\mu} = \ddpr{f_k,\sigma}$ for all $k \in \N$, hence $\mu = \sigma$.

\proofstep{Convexity of $\GY_0^\sing(a \otimes \xi)$:} Let $\mu, \nu \in \GY^\sing_0(a \otimes \xi)$ and $\theta \in (0,1)$. We need to show that $\theta \mu + (1-\theta)\nu \in \GY^\sing_0(a \otimes \xi)$, the convex combination here being understood in the sense of functionals on $\Ebf^\sing(\xi)$. Without loss of generality we may assume that $\xi = \mathbf{e}_1$, the first unit vector, and $Q_\xi = Q(0,1)$. First we show that it suffices to prove the assertion for averaged measures, where by \enquote{averaged} we mean that they originate from the averaging procedure of Proposition~\ref{prop:averaging}. Indeed, an adaptation of the approximation principle, Proposition~\ref{prop:approximation}, entails that all Young measures $\mu, \nu \in \GY_0^\sing(a \otimes \xi)$ are weak* (sequential) limits of sequences of piecewise homogeneous and averaged gradient Young measures, i.e.\ $\mu = \wslim_{k \to \infty} \mu_k$ and $\nu = \wslim_{k \to \infty} \nu_k$, where with the notation of Proposition~\ref{prop:approximation},
\[
  \ddprb{f,\mu_k} = \sum_{l = 1}^{N(k)} \ddprb{f,\overline{\mu \restrict C_{kl}}} \qquad\text{and}\qquad
  \ddprb{f,\nu_k} = \sum_{l = 1}^{N(k)} \ddprb{f,\overline{\nu \restrict C_{kl}}}.
\]
and $(\Lcal^d + \lambda_\mu + \lambda_\nu)(\partial C_{kl}) = 0$ for all $l = 1,\ldots,N(k)$ and $k \in \N$. From the proof of said proposition in Section~5.3 of~\cite{KriRin10CGGY} we know that for every fixed $l \in \N$ the sets $(C_{kl})_k$ are cuboids arranged in a lattice; in fact it is not difficult to see that we may choose the same open cubes $(C_{kl})$ for both $\mu$ and $\nu$ and such that $\xi = \mathbf{e}_1$ is a face normal to all the $C_{kl}$'s.

We will show below that the set of homogeneous, averaged gradient Young measures is convex. Assuming this for the moment, we get $\theta \overline{\mu \restrict C_{kl}} + (1-\theta) \overline{\nu \restrict C_{kl}} \in \GY_0^\sing(a \otimes \xi)$ for all $\theta \in (0,1)$, where the operation of addition is to be understood in the sense of functionals in $\Ebf^\sing(Q;\R^{m \times d})^*$. Hence, the Young measure
\begin{align*}
  \theta \mu + (1-\theta)\nu &= \wslim_{k \to \infty} \big[\theta \mu_k + (1-\theta) \nu_k\big] \\
  &= \wslim_{k \to \infty} \sum_{l = 1}^{N(k)} \Big[ \theta \overline{\mu \restrict C_{kl}} + (1-\theta) \overline{\nu \restrict C_{kl}} \Big]
\end{align*}
lies in the weak*-closure of $\GY_0^\sing(a \otimes \xi)$. For this note that we can glue together suitable generating sequences of the Young measures $\theta \overline{\mu \restrict C_{kl}} + (1-\theta) \overline{\nu \restrict C_{kl}}$ via a standard staircase construction employing Lemma~\ref{lem:boundary_adjustment} and the one-directionality of the underlying deformations. By the weak* closedness assertion from the first part of the lemma, we may conclude $\theta \mu + (1-\theta)\nu \in \GY_0^\sing(a \otimes \xi)$ ($\lambda_{\theta \mu + (1-\theta)\nu}(\partial Q) = 0$ follows since $\lambda_\mu(\partial Q) = \lambda_\nu(\partial Q) = 0$), proving the second assertion of the present lemma.

To show convexity for the set of homogenous, averaged Young measures used above, let $C \subset Q$ be an open cube with $(\Lcal^d + \lambda_\mu + \lambda_\nu)(\partial C) = 0$. Without loss of generality we further assume $C$ to be the unit cube $Q(0,1) = (-1/2,1/2)^d$. Denote by $\bar{\mu}, \bar{\nu} \in \GY(C;\R^{m \times d})$ the corresponding averagings of $\mu \restrict C$ and $\nu \restrict C$, respectively. In particular (see~\eqref{eq:averaged_GYM_action}),
\begin{align*}
  \ddprb{\phi \otimes h,\bar{\mu}} &= \ddprb{\ONE \otimes h,\mu \restrict C} 
  \dashint_C \phi(x) \dd x, \\
  \ddprb{\phi \otimes h,\bar{\nu}} &= \ddprb{\ONE \otimes h,\nu \restrict C} 
  \dashint_C \phi(x) \dd x
\end{align*}
for all $\phi \in \Crm(C)$, $h \in \Crm(\R^{m \times d})$ with $\phi \otimes h \in \Ebf(C;\R^{m \times d})$. The underlying deformations of $\bar{\mu}, \bar{\nu}$ are affine functions and they only depend on the direction $\xi = \mathbf{e}_1$, so say $[\bar{\mu}] = Du$, $[\bar{\nu}] = Dv$ with $u(x) = q_1 a x_1$ and $v(x) = q_2 a x_1$ for some constants $q_1,q_2 \in \R$.

Let $\nabla u_j \toY \bar{\mu}$ and $\nabla v_j \toY \bar{\nu}$, where $(u_j), (v_j) \subset \Wrm^{1,1}(C;\R^m )$ are both uniformly $\Wrm^{1,1}$-bounded sequences with $u_j \toweakstar u$, $v_j \toweakstar v$. By Lemma~\ref{lem:boundary_adjustment} we can also assume $u_j|_{\partial C} = u|_{\partial C}$, $v_j|_{\partial C} = v|_{\partial C}$ for all $j \in \N$. Define
\[
  D := \setb{ x \in C = Q(0,1) }{ x_1 + 1/2 \leq \theta },
\]
for which it holds that $\abs{D} = \theta \abs{C}$. Then for each $j \in \N$ cover $\Lcal^d$-almost all of $D$ and $C \setminus D$ with (similar) cubes $(a_{jl} + \eps_j C)_l$ and cubes $(b_{jl} + \delta_j C)_l$, respectively, where $\eps_j, \delta_j \leq 1/j$. Define
\[
  w_j(x) := \begin{cases}
      \eps_j u_j \big( \frac{x-a_{jl}}{\eps_j} \big) + u(a_{jl})
        & \text{if } x \in a_{jl} + \eps_j C \; (l \in \N), \\
      \delta_j v_j \big( \frac{x-b_{jl}}{\delta_j} \big) + v(b_{jl}) + \beta a
        & \text{if } x \in b_{jl} + \delta_j C \; (l \in \N).
    \end{cases}
\]
Here, the constant $\beta$ is chosen as to eliminate the jump between the two parts of $C$. The construction of $w_j$ is such that
\[
  \nabla w_j(x) = \begin{cases}
      \nabla u_j \big( \frac{x-a_{jl}}{\eps_j} \big)
        & \text{if } x \in a_{jl} + \eps_j C \; (l \in \N), \\
      \nabla v_j \big( \frac{x-b_{jl}}{\delta_j} \big)
        & \text{if } x \in b_{jl} + \delta_j C \; (l \in \N).
    \end{cases}
\]

It is not difficult to see that $(w_j)$ is still bounded in $\Wrm^{1,1}(C;\R^m)$ and we may assume that $\nabla w_j \toY \gamma \in \GY(C;\R^{m \times d})$. Since effectively in $D$ and $C \setminus D$ we are repeating the construction of the averaging principle, Proposition~\ref{prop:averaging}, we can deduce similarly to the proof of~\eqref{eq:averaged_GYM_action}, see~\cite{KriRin10CGGY}, that for $\phi \in \Crm(\cl{C})$, 
$h \in \Crm(\R^{m \times d})$ positively $1$-homogeneous (in particular $\phi \otimes h \in \Ebf(C;\R^{m \times d})$),
\begin{equation} \label{eq:convex_hom_GYM_action}
\begin{aligned}
  \ddprb{\phi \otimes h,\gamma} &= \ddprb{\ONE_{\cl{C}} \otimes h,\bar{\mu}} \frac{1}{\abs{C}} \int_D \phi(x) \dd x \\
  &\qquad+ \ddprb{\ONE_{\cl{C}} \otimes h,\bar{\nu}} \frac{1}{\abs{C}} \int_{C \setminus D} \phi(x) \dd x.
\end{aligned}
\end{equation}
More precisely, in the step of the proof of the averaging principle where we recognized the Riemann sum, we now have to multiply with $\abs{C}$ to get the correct measure in the sum, hence the division by $\abs{C}$ instead of taking the average in front of the integrals.

Now apply the averaging principle, Proposition~\ref{prop:averaging}, to the measure $\gamma$ to get a gradient Young measure $\bar{\gamma} \in \GY^\sing(C;\R^{m \times d})$ with action
\[
  \ddprb{\phi \otimes h,\bar{\gamma}} = \ddprb{\ONE \otimes h,\gamma} 
  \dashint_C \phi(x) \dd x
\]
for $\phi \in \Crm(\cl{C})$, $h \in \Crm(\R^{m \times d} )$ positively $1$-homogeneous. By~\eqref{eq:convex_hom_GYM_action} (and the extended representation results in~\cite{KriRin10CGGY}),
\begin{align*}
  \ddprb{\phi \otimes h,\bar{\gamma}} &= \Bigg[ \frac{\abs{D}}{\abs{C}} 
  \ddprb{\ONE_{\cl{C}} \otimes h,\bar{\mu}} + \frac{\abs{C \setminus D}}{\abs{C}} 
  \ddprb{\ONE_{\cl{C}} \otimes h,\bar{\nu}} \Bigg] \dashint_C \phi(x) \dd x \\
%  &= \Big[ \theta \ddprb{\ONE \otimes h,\mu \restrict C}
%    + (1-\theta) \ddprb{\ONE \otimes h,\nu \restrict C} \Big] \dashint_C \phi(x) \dd x \\
  &= \theta \ddprb{\phi \otimes h,\bar{\mu}} + (1-\theta) \ddprb{\phi \otimes h,\bar{\nu}}.
\end{align*}
Hence, by Lemma~\ref{lem:tensor_products_determine_YM} we have that $\theta \bar{\mu} + (1-\theta) \bar{\nu} = \bar{\gamma} \in \GY^\sing(C;\R^{m \times d})$ and clearly, this is a homogeneous, averaged Young measure by construction. This concludes the proof. 
\end{proof}

We can now prove a local version of Proposition~\ref{prop:GYM_sufficient} at singular points:

\begin{proposition} \label{prop:GY_sing_rank_1_sufficient}
Let $\sigma \in \Ybf_0^\sing(a \otimes \xi)$ for $a \in \R^m \setminus \{0\}$, $\xi \in \Sbb^d$. If $\sigma$ satisfies the singular Jensen-type inequality
\begin{equation} \label{eq:charact_cond_sing}
  g(a \otimes \xi) = g \bigl( \dprb{\id,\sigma_y^\infty} \bigr) \leq \dprb{g, \sigma_y^\infty}
\end{equation}
for all quasiconvex and positively $1$-homogeneous $g \in \Crm(\R^{m \times d})$ and $\lambda_\sigma$-almost every $y \in Q$, then $\sigma \in \GY_0^\sing(a \otimes \xi)$.
\end{proposition}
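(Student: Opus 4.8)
The plan is to follow the scheme of Proposition~\ref{prop:GY_reg_sufficient}: a Hahn--Banach reduction to a single weakly*-closed half-space, followed by an estimate that combines the Jensen-type hypothesis~\eqref{eq:charact_cond_sing} with a recovery (generating-sequence) construction. By Lemma~\ref{lem:GY_sing_properties} the set $\GY_0^\sing(a\otimes\xi)$ is convex, and its weak* closure in $\Ebf^\sing(\xi)^*$ is contained in the weakly*-closed set $\GY^\sing(a\otimes\xi)$; since moreover $\lambda_\sigma(\partial Q_\xi)=0$, it suffices to place $\sigma$ in the weak* closure of $\GY_0^\sing(a\otimes\xi)$. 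Exactly as in Step~1 of the regular case, every weakly*-closed affine half-space containing $\GY_0^\sing(a\otimes\xi)$ is of the form $\setn{e^*}{\ddpr{f_H,e^*}\ge\kappa}$ for some $f_H\in\Ebf^\sing(\xi)$ and $\kappa\in\R$, and since the zero Young measure lies in $\GY_0^\sing(a\otimes\xi)$ we may take $\kappa\le 0$. Thus the proposition reduces to showing $\ddpr{f_H,\sigma}\ge\kappa$ whenever $\ddpr{f_H,\mu}\ge\kappa$ for all $\mu\in\GY_0^\sing(a\otimes\xi)$.

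The first part of the argument extracts information from the half-space condition. Testing $\ddpr{f_H,\mu}\ge\kappa$ against \emph{elementary} one-directional singular Young measures --- those with concentration measure $\alpha\,\delta_{t_0}\otimes\Lcal^{d-1}$ (a single ``jump layer'' $\setn{y}{y\cdot\xi=t_0}$) or $\alpha\,\Lcal^d\restrict Q_\xi$, carrying a fixed $y$-independent concentration-angle $\pi\in\Mbf^1(\partial\Bbb^{m\times d})$ with barycenter $a\otimes\xi$ --- and letting the amplitude $\alpha\to\infty$ forces $\dpr{f_H(t,\sbullet),\pi}\ge 0$ for every $t$ and every such $\pi$ that is \emph{realizable}, i.e.\ occurs as the concentration-angle of some element of $\GY_0^\sing(a\otimes\xi)$. (That these elementary Young measures really belong to $\GY_0^\sing(a\otimes\xi)$ is itself an instance of the recovery construction below.) Granting for the moment the \emph{homogeneous instance} of the proposition --- that any probability measure $\pi$ on $\partial\Bbb^{m\times d}$ with barycenter $a\otimes\xi$ satisfying $g(a\otimes\xi)\le\dpr{g,\pi}$ for all quasiconvex, positively $1$-homogeneous $g$ is realizable --- the hypothesis~\eqref{eq:charact_cond_sing} says precisely that $\sigma_y^\infty$ is realizable for $\lambda_\sigma$-a.e.\ $y$. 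Since $f_H\in\Ebf^\sing(\xi)$ depends on $y$ only through $y\cdot\xi$, this yields
\[
  \ddprb{f_H,\sigma} = \int_{\cl{Q_\xi}} \dprb{f_H(y,\sbullet),\sigma_y^\infty}\dd\lambda_\sigma(y) \;\ge\; 0 \;\ge\; \kappa,
\]
which is what we want.

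It remains to prove the homogeneous instance used above, and this is where the bulk of the work lies. It is again a Hahn--Banach argument, now carried out in $\Mbf(\partial\Bbb^{m\times d})$: one first checks that the set of realizable homogeneous concentration-angles with barycenter $a\otimes\xi$ is convex and weakly*-closed (by one-directional staircase gluings as in the proof of Lemma~\ref{lem:GY_sing_properties}, using Lemma~\ref{lem:boundary_adjustment} and the fact, supplied by the Rigidity Lemma~\ref{lem:BV_rigidity}, that the underlying deformation of any one-directional competitor is of the form $u_0+a\,\psi(y\cdot\xi)$). One then separates a non-realizable $\pi$ from this set by a continuous $\phi$ on $\partial\Bbb^{m\times d}$, extends $\phi$ to a positively $1$-homogeneous function $g$ on $\R^{m\times d}$, regularizes $g\mapsto g+\eps\abs{\sbullet}$, and confronts the hypothesis on $\pi$ with the quasiconvexified integrand as in~\eqref{eq:reg_est_2}. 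The analytic heart is then a recovery construction that realizes the relevant envelope at $a\otimes\xi$ \emph{by pure concentration rather than oscillation}: starting from a near-optimal sequence $\psi_j$ in the envelope formula~\eqref{eq:Qh_inf_formula} (uniformly $\Wrm^{1,1}$-bounded, as in the regular case, using finiteness of the envelope at $a\otimes\xi$, which must be verified via the generalized Riemann--Lebesgue Lemma, Corollary~\ref{cor:Riemann_Lebesgue}), one amplifies $a\otimes\xi+\nabla\psi_j$ on a fine family of shrinking balls tiling $Q_\xi$ --- converting oscillation into concentration while keeping the generating sequence bounded in $\Lrm^1$ --- and then laminates the resulting concentration-angle profile against elementary one-directional jump measures (of barycenters $\pm(a\otimes\xi)/\abs{a}$), with weights fixed by an intermediate-value argument, so as to restore the barycenter exactly to $a\otimes\xi$. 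The layers are glued by a staircase construction exploiting one-directionality and Lemma~\ref{lem:boundary_adjustment}, and the outcome is regularized by the Averaging and Approximation principles (Propositions~\ref{prop:averaging} and~\ref{prop:approximation}). Letting the amplification/lamination error and then $\eps\downarrow 0$ closes the estimate and gives $\sigma\in\GY_0^\sing(a\otimes\xi)$.

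I expect the main obstacle to be precisely this recovery construction. In contrast with the regular case, where the quasiconvex envelope at $A_0$ is attained along $\Lrm^\infty$-bounded oscillating sequences, here one must produce \emph{purely singular} (concentrating), \emph{one-directional} generating sequences respecting the fixed rank-one barycenter $a\otimes\xi$, and the positive $1$-homogeneity of the test integrands interacts non-trivially with the constraint $\dpr{\id,\pi}=a\otimes\xi$: the naive amplification of an oscillation only yields a concentration-angle with barycenter $(a\otimes\xi)/\dashint_{\Bbb^d}\abs{a\otimes\xi+\nabla\psi_j}$, which is exactly why the corrective lamination step (and hence the rank-one structure, via the Rigidity Lemma) is indispensable.
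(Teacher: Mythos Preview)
Your outline is coherent and the Hahn--Banach reduction is the same starting point as the paper's, but from Step~2 onward the two arguments diverge substantially. You reduce to a \emph{homogeneous instance} (a single concentration-angle $\pi$ with barycenter $a\otimes\xi$) by first exploiting $\kappa\le 0$ and testing against amplified jump-layer measures to extract $\dpr{f_H(t,\frarg),\pi}\ge 0$ pointwise in $t$; you then propose to realize $\pi$ via an amplification-plus-lamination recovery. The paper does \emph{not} separate out a homogeneous case. Instead it handles the $x$-dependence of $f_H$ by slicing $Q_\xi$ into thin slabs $S_k$ orthogonal to $\xi$, freezes $x$ at a well-chosen $z_k\in S_k$ (whose existence is established by a Riemann--Lebesgue contradiction argument), and then uses the crucial fact that $g_\eps(z_k,\frarg)$ and hence $Qg_\eps(z_k,\frarg)$ are positively $1$-homogeneous: the ordinary $\Wrm^{1,\infty}$ recovery sequence $\psi_j^{(k)}$ for $Qg_\eps(z_k,A_0)$ is simply \emph{scaled} by $\lambda_\sigma(S_k)/|S_k|$, and by $1$-homogeneity the integrals scale correctly to $Qg_\eps(z_k,A_0)\,\lambda_\sigma(S_k)$. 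A staircase in $\xi$ glues the slabs, an averaging gives $\mu^{(\delta)}\in\GY_0^\sing(a\otimes\xi)$, and letting $\delta,\eps\downarrow 0$ closes the estimate.

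The main thing you are missing is this direct scaling trick. Because every test function in $\Ebf^\sing(\xi)$ is already positively $1$-homogeneous, there is no need to ``convert oscillation into concentration'' via shrinking-ball amplification, nor to repair the barycenter by a lamination step: multiplying a recovery sequence with affine boundary data $A_0 x$ by a scalar preserves the barycenter direction $A_0$ and (by $1$-homogeneity of both $g_\eps$ and of the notion of a singular Young measure) yields a competitor in $\GY_0^\sing(a\otimes\xi)$ with any prescribed concentration mass. Your route is not wrong in principle, but several steps are only sketched and would need real work: (a)~the passage from ``$\pi$ realizable'' to ``the jump-layer measure at every $t_0$ with angle $\pi$ lies in $\GY_0^\sing(a\otimes\xi)$'' requires a squeezing/rescaling argument you do not give; (b)~the lamination correction of the barycenter assumes the pure jump $\delta_{A_0/|A_0|}$ is realizable and that mixing it in does not destroy the envelope estimate---both plausible, but neither immediate; and (c)~your second Hahn--Banach (in $\Mbf(\partial\Bbb^{m\times d})$) needs the set of realizable angles to be weakly*-closed \emph{and} convex, which is essentially Lemma~\ref{lem:GY_sing_properties} again at the homogeneous level. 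The paper's slab-and-scale argument bypasses all of this by never leaving the space $\Ebf^\sing(\xi)^*$ and never isolating a homogeneous sub-problem.
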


\begin{proof}
We employ a similar, yet more involved, Hahn--Banach argument as for regular points. 

\proofstep{Step 1.}
It suffices to show that for every weakly*-closed affine half-space $H$ in $\Ebf^\sing(\xi)^*$ with $\GY_0^\sing(a \otimes \xi) \subset H$, it holds that $\sigma \in H$. By Lemma~\ref{lem:GY_sing_properties} the set $\GY^\sing(a \otimes \xi)$ is weakly*-closed and $\GY_0^\sing(a \otimes \xi)$ is convex, hence the Hahn--Banach Theorem implies that $\sigma$ lies in the weak* closure of $\GY_0^\sing(a \otimes \xi)$, which is contained in $\GY^\sing(a \otimes \xi)$. This then proves the proposition.

For every such half-space $H$ we have analogously to the situation at regular points,
\[
  H = \setb{ e^* \in \Ebf^\sing(\xi)^* }{ e^*(f_H) \geq \kappa }
\]
for some $f_H \in \Ebf^\sing(\xi)$ and $\kappa \in \R$. In particular,
\[
  \ddprb{f_H,\mu} \geq \kappa  \qquad\text{for all $\mu \in \GY_0^\sing(a \otimes \xi)$.}
\]
The goal for the remainder of the proof is to show that $\ddprn{f_H,\sigma} \geq \kappa$.

\proofstep{Step 2.}
For $\eps \in (0,1)$ define
\[
  g_\eps(x,A) := f_H(x,A) + \eps \abs{A},  \qquad x \in Q_\xi, \, A \in \R^{m \times d}.
\]
Clearly, $g_\eps \in \Ebf^\sing(\xi)$. Moreover, fix $\delta > 0$ and take a subdivision of $Q_\xi$ into cuboid slices $S_k$ (with a \enquote{long} face orthogonal to $\xi$), $k = 1,\ldots,N$, satisfying
\[
  \lambda_\sigma(\partial S_k) = 0
\]
and with diameters so small that (see~\eqref{eq:S_def})
\begin{equation} \label{eq:Sgeps_estimate}
  \abs{Sg_\eps(x,A) - Sg_\eps(y,A)} \leq \delta
    \qquad\text{whenever $x,y \in S_k$ ($k \in \N$), $A \in \cl{\Bbb^{m \times d}}$.}
\end{equation}
This is possible by the uniform continuity of $Sg_\eps$ on $\cl{Q_\xi \times \Bbb^{m \times d}}$. 

We claim that in each $S_k$ we can find a point $z_k$ (henceforth fixed) such that the quasiconvex envelope $Qg_\eps(z_k,\frarg)$ of $g_\eps(z_k,\frarg)$ satisfies:
\begin{itemize}
  \item[(A)] $Qg_\eps(z_k,\frarg)$ is finite.
  \item[(B)] $Qg_\eps(z_k,\frarg)$ is quasiconvex and positively $1$-homogeneous (in particular, $Qg_\eps(z_k,\frarg)$ has linear growth at infinity).
  \item[(C)] There exists a sequence $(\psi_j^{(k)})_j \subset \Wrm_{A_0 x}^{1,\infty}(S_k;\R^m)$ with
\[
  \qquad \dashint_{S_k} g_\eps(z_k,\nabla \psi_j^{(k)}(x)) \dd x
  \quad\to\quad
  Qg_\eps(z_k,A_0)
\]
and for some universal constant $c(\eps)$ (not depending on $k,\delta$),
\[
  \qquad \supmod_j \normn{\nabla \psi_j^{(k)}}_{\Lrm^1(S_k;\R^{m \times d} )} \leq c(\eps) \abs{S_k}.
\]
\end{itemize}

To prove (A), it suffices to show that $Qf_H(z_k,A_0) > -\infty$ for at least one $z_k \in S_k$. Indeed, if this condition holds, then also $Qf_H(z_k,A) > -\infty$ for all $A \in \R^{m \times d}$ by the considerations in Section~\ref{ssc:BVQC}. Further, $Qf_H(z_k,\frarg) \in \R$ yields, by the classical Jensen inequality and the Gauss--Green Theorem,
\begin{align*}
  Qf_H(z_k,A) + \eps\abs{A} &\leq \inf \setBB{ \dashint_{\Bbb^d} f_H(z_k,A + \nabla \psi(y)) + \eps\abs{A + \nabla \psi(y)} \dd y }{ \\
  &\qquad\qquad\qquad \psi \in \Crm_c^\infty(\Bbb^d;\R^m) } \\
  &= Qg_\eps(z_k,A)
\end{align*}
for all $A \in \R^{m \times d}$, so in particular $Qg_\eps(z_k,\frarg) > -\infty$.

To show $Qf_H(z_k,A_0) > -\infty$ for at least one $z_k \in S_k$, we prove the stronger assertion that the set of such $z_k$ is even dense in $Q_\xi$. Assume to the contrary that there exists an open slice
\[
  S(z_0,r) := \setb{ x \in Q_\xi }{ \abs{(x-z_0) \cdot \xi} < r }
\]
for $z_0 \in Q_\xi$, $r > 0$, such that $Qf_H(z,A_0) = -\infty$ for all $z \in S(z_0,r)$ (in this context recall that $z \mapsto f_H(z,A_0)$ only depends on $z \cdot \xi$). Then, for every $z \in S(z_0,r)$ we can find $\psi_z \in \Wrm_{A_0 x}^{1,\infty}(Q_\xi,\R^m)$ with
\[
  \dashint_{Q_\xi} f_H(z,\nabla \psi_z(y)) \dd y < \frac{\kappa}{\abs{S(z_0,r)}} - 1.
\]
Moreover, we may assume that the map $z \mapsto \psi_z$ depends on $z \cdot \xi$ only.

For each $z \in S(z_0,r)$ by the uniform continuity of $Sf_H$ choose $\eta(z) = \eta(z \cdot \xi) > 0$ so small that
\begin{equation} \label{eq:Sf_estimate}
  \abs{Sf_H(x,A) - Sf_H(z,A)} \leq \Bigg( \dashint_{Q_\xi} 1 + \abs{\nabla \psi_z(y)} \dd y \Bigg)^{-1}
\end{equation}
for all $x \in S(z,\eta(z))$ and $A \in \Bbb^{m \times d}$. By virtue of Vitali's Covering Theorem, cover $\Lcal^d$-almost all of $S(z_0,r)$ by slices $S_i = S(z_i,r_i) \cap S(z_0,r)$ ($i \in \N$) with $0 < r_i \leq \eta(z_i)$. The generalized Riemann--Lebesgue Lemma, Corollary~\ref{cor:Riemann_Lebesgue}, then yields for each $i$ a gradient Young measure $\mu_i \in \GY(S_k;\R^{m \times d})$ with underlying deformation $A_0 x = a(x \cdot \xi)$ and satisfying
\[
  \ddprb{f_H,\mu_i} = \int_{S_i} \dashint_{Q_\xi} f_H(x,\nabla \psi_{z_i}(y)) \dd y \dd x.
\]
Gluing together all the generating sequences in the slices (again employing a boundary adjustment via Lemma~\ref{lem:boundary_adjustment}), we infer the existence of a gradient Young measure $\mu \in \GY(S(z_0,r);\R^{m \times d})$ acting on $f_H$ as $\ddpr{f_H,\mu} = \sum_{i \in \N} \ddpr{f_H,\mu_i}$. By the choice of the $\eta(z_i)$ and~\eqref{eq:Sf_estimate} we infer
\begin{align*}
  \ddprb{f_H,\mu} &= \sum_{i \in \N} \int_{S_i} \dashint_{Q_\xi} f_H(x,\nabla \psi_{z_i}(y)) \dd y \dd x \\
  &\leq \sum_{i \in \N} \Bigg( \dashint_{Q_\xi} f_H(z_i,\nabla \psi_{z_i}(y)) \dd y + 1 \Bigg)\abs{S_i} < \kappa.
\end{align*}
However, from the above construction it is also clear that $\mu \in \GY_0^\sing(a \otimes \xi)$ (extending $\mu$ by the zero Young measure in $Q_\xi \setminus S(z_0,r)$), hence by assumption $\ddpr{f_H,\mu} \geq \kappa$, a contradiction.

For the proof of (B), we observe that $Qg_\epsilon(z_k,\frarg)$ is quasiconvex, this is analogous to Step~2 in the proof of Proposition~\ref{prop:GY_reg_sufficient}. Moreover, it is not difficult to see from the formula~\eqref{eq:Qh_inf_formula} that positive $1$-homogeneity is preserved when passing to the quasiconvex envelope. We have the upper bound $Qg_\epsilon(z_k,A) \leq g_\epsilon(z_k,A) \leq (M+1)(1+\abs{A})$ and also that $Qg_\epsilon(z_k,\frarg)$ is separately convex and finite. Thus, Lemma~2.5 in~\cite{Kris99LSSW} implies that $\abs{Qg_\epsilon(z_k,A)} \leq \tilde{M}(1+\abs{A})$ for a constant $\tilde{M} > 0$. The decisive point here is that $\tilde{M}$ depends on the dimensions $m,d$, the growth bound $M+1$, and on $Qg_\epsilon(z_k,0) \leq f(z_k,0) \leq M$, but not on $z_k$.

Finally, for assertion (C) we first investigate the coercivity of the functional
\[
  \Gcal_k(\psi) := \dashint_{S_k} g_\epsilon(z_k,\nabla \psi(x)) \dd x,
    \qquad \psi \in \Wrm_{A_0 x}^{1,\infty}(S_k;\R^m).
\]
Since $g_\epsilon(z_k,A) \geq Qg_\epsilon(z_k,A) \geq Qf_H(z_k,A) + \epsilon \abs{A}$, it suffices to check coercivity of the functional with the integrand $Qf_H(z_k,\frarg) + \epsilon \abs{\frarg}$. Let $K \in \R$. By quasiconvexity of $Qf_H(z_k,\frarg)$, the condition
\[
  K \geq \dashint_{S_k} Qf_H(z_k,\nabla \psi(x)) + \epsilon\abs{\nabla \psi(x)} \dd x \geq Qf_H(z_k,A_0)
  + \epsilon \dashint_{S_k} \abs{\nabla \psi(x)} \dd x
\]
for a $\psi \in \Wrm_{A_0 x}^{1,\infty}(S_k;\R^m)$ implies
\[
  \int_{S_k} \abs{\nabla \psi(x)} \dd x \leq \frac{\abs{S_k}}{\epsilon} \Big[ K - Qf_H(z_k,A_0) \Big]
    \leq \frac{\abs{S_k}}{\epsilon} \Big[ K + \tilde{M}(1+\abs{A_0}) \Big].
\]

Now let $(\psi_j ) \subset \Wrm_{A_0 x}^{1,\infty}(S_k;\R^m)$ be a minimizing sequence for $\Gcal_k$. 
Then, the $\psi_j$ satisfy the first assertion in (C) and also, discarding some leading elements in the sequence $(\psi_j)$ if necessary,
\[
  \dashint_{S_k} g_\epsilon(z_k,\nabla \psi_j(x)) \dd x \leq Qg_\epsilon(z_k,A_0) + 1 \leq \tilde{M}(1+\abs{A_0}) + 1.
\]
From the coercivity above we therefore get
\[
  \int_{S_k} \abs{\nabla \psi_j(x)} \dd x \leq \frac{2\tilde{M}(1+\abs{A_0}) + 1}{\epsilon} \abs{S_k}
    =: c(\epsilon) \abs{S_k},
\]
this is the second assertion in (C).

\proofstep{Step 3.}
For all $k = 1,\ldots,N$ pick $z_k \in S_k$ that satisfies the properties (A), (B), (C) above. Then, by virtue of~\eqref{eq:Sgeps_estimate},
\begin{equation} \label{eq:fH_sigma_est_1}
\begin{aligned}
  \ddprb{f_H,\sigma} &= \ddprb{g_\eps,\sigma} - \eps \ddprb{\ONE \otimes \abs{\frarg},\sigma} \\
  &= \sum_{k = 1}^N \int_{S_k} \dprb{g_\eps(y,\frarg),\sigma_y^\infty} \dd \lambda_\sigma(y)
    - \eps \ddprb{\ONE \otimes \abs{\frarg},\sigma}\\
  &\geq \sum_{k = 1}^N \int_{S_k} \dprb{g_\eps(z_k,\frarg),\sigma_y^\infty} \dd \lambda_\sigma(y)
    - (\eps+\delta) \ddprb{\ONE \otimes (1+\abs{\frarg}),\sigma}.
\end{aligned}
\end{equation}
Using $g_\eps(z_k,\frarg) \geq Qg_\eps(z_k,\frarg)$, we infer from the key assumption~\eqref{eq:charact_cond_sing} that
\begin{equation} \label{eq:fH_sigma_est_2}
\begin{aligned}
  \int_{S_k} \dprb{g_\eps(z_k,\frarg),\sigma_y^\infty} \dd \lambda_\sigma(y)
    &\geq \int_{S_k} \dprb{ Qg_\eps(z_k,\frarg), \sigma_y^\infty} \dd \lambda_\sigma(y) \\
  &\geq Qg_\eps(z_k,A_0) \, \lambda_\sigma(S_k).
\end{aligned}
\end{equation}

By assertion (C) above, for each $k = 1,\ldots,N$ there exists a \enquote{recovery} sequence $(\psi_j^{(k)})_j \subset \Wrm_{A_0 x}^{1,\infty}(S_k;\R^m )$ with $\normn{\nabla \psi_j^{(k)}}_{\Lrm^1(S_k;\R^{m \times d} )} \leq c(\eps) \abs{S_k}$ and such that
\[
  \dashint_{S_k} g_\eps(z_k,\nabla \psi_j^{(k)}(x)) \dd x
  \quad\to\quad
  Qg_\eps(z_k,A_0)  \qquad\text{for each $k = 1,\ldots,N$.}
\]
We can now apply the averaging principle as in Proposition~\ref{prop:averaging} to this sequence and take for each $k$ a new sequence $(w_j^{(k)}) \subset \Wrm_{A_0 x}^{1,1}(S_k;\R^m )$, which in addition to the above recovery property (which still holds by a change of variables) satisfies $w_j^{(k)} \toweakstar A_0 x = a(x \cdot \xi)$ as $j \to \infty$. Moreover, the measures $\abs{\nabla w_j^{(k)}} \, \Lcal^d \restrict \Omega $ do not charge the boundary of $S_k$ in the limit. So we may furthermore require $w_j^{(k)}(x)|_{\partial S_k} = A_0 x$.

Recall that $\delta > 0$ was fixed above (and we chose the subdivision of $Q_\xi$ into the slices $S_k$ according to this parameter) and define $w_j^{(\delta)} \colon Q_\xi \to \R^m$ as
\[
  w_j^{(\delta)}(x) := w_j^{(k)}(x) \frac{\lambda_\sigma(S_k)}{\abs{S_k}} + g^{(\delta)}(x \cdot \xi)
  \qquad\text{if $x \in S_k$ ($k = 1,\ldots,N$)},
\]
where $g^{(\delta)}(x \cdot \xi)$ is a staircase term in direction $\xi$, which is chosen precisely to annihilate the jumps otherwise incurred by the difference in $\lambda_\sigma(S_k)$ between adjacent slices. Since $\lambda_\sigma$ is one-directional in direction $\xi$, so is $g^{(\delta)}$. This procedure yields a sequence $(w_j^{(\delta)})_j$, which is uniformly bounded in $\Wrm^{1,1}(Q_\xi;\R^m)$ (for $\delta$ fixed) and which has the property
\begin{equation} \label{eq:fH_sigma_est_3}
  \int_{S_k} g_\eps(z_k,\nabla w_j^{(\delta)}(x)) \dd x
  \quad\to\quad
  Qg_\eps(z_k,A_0) \lambda_\sigma(S_k)  \qquad\text{for every $k = 1,\ldots,N$.}
\end{equation}
The $\Wrm^{1,1}$-uniform boundedness can be seen as follows: We have
\[
  \norm{\nabla w_j^{(\delta)}}_{\Lrm^1(Q_\xi;\R^{m \times d})} \leq c(\eps)\lambda_\sigma(Q_\xi)
\]
for all $j$ and by Poincar\'{e}'s inequality this implies the boundedness of the sequence $(w_j^{(k)})$ in $\Wrm^{1,1}(Q_\xi;\R^m )$.

Combining all the previous arguments, from~\eqref{eq:fH_sigma_est_1},~\eqref{eq:fH_sigma_est_2},~\eqref{eq:fH_sigma_est_3} we conclude
\begin{align}
  \ddprb{f_H,\sigma} &\geq \sum_{k = 1}^N \int_{S_k} \dprb{g_\eps(z_k,\frarg),\sigma_y^\infty} \dd \lambda_\sigma(y)
    - (\eps+\delta) \ddprb{\ONE \otimes (1+\abs{\frarg}),\sigma}  \notag\\
  &\geq \sum_{k = 1}^N Qg_\eps(z_k,A_0) \lambda_\sigma(S_k) - (\eps+\delta) \ddprb{\ONE \otimes (1+\abs{\frarg}),\sigma}  \notag\\
  &= \sum_{k = 1}^N \lim_{j \to \infty} \int_{S_k} g_\eps(z_k,\nabla w_j^{(\delta)}(x)) \dd x
    - (\eps+\delta) \ddprb{\ONE \otimes (1+\abs{\frarg}),\sigma}  \label{eq:fH_sigma_est}
\end{align}
For $\delta > 0$ fixed, separately in every slice $S_k$ apply the averaging principle, Proposition~\ref{prop:averaging}, to the sequence $(\nabla w_j^{(\delta)})$, or, more precisely, to the generated Young measure (restricted to $S_k$). This Young measure exists owing to the uniform boundedness in $\Wrm^{1,1}(Q_\xi;\R^m)$, selecting a subsequence if necessary. We obtain a piecewise homogeneous (on the slices $S_k$) gradient Young measure $\mu^{(\delta)} \in \GY(Q_\xi;\R^{m \times d})$ with $\lambda_\mu(\partial Q_\xi) = 0$ (from the homogeneity) that acts on a function $\ONE \otimes \sum_k h_k \ONE_{S_k} \in \Ebf^\sing(\xi)$ as
\begin{equation} \label{eq:mu_delta}
  \ddprb{\ONE \otimes \sum_k h_k \ONE_{S_k}, \mu^{(\delta)}} = \sum_{k = 1}^N \lim_{j \to \infty} \int_{S_k} h_k(\nabla w_j^{(\delta)}(x)) \dd x.
\end{equation}

\proofstep{Step~4.}
Next, we show how we can achieve that the oscillation measures are equal to $\delta_0$ almost everywhere by creating \enquote{artificial} concentrations.

In all of the following we assume without loss of generality that $\xi = \mathbf{e}_1$ and that the slices $S_k = S(z_k,r_k)$ are arranged in the order $S_1,S_2,\ldots,S_N$ in direction $\xi = \mathbf{e}_1$. For $(w_j^{(\delta)})$ we can assume by construction (and a boundary adjustment) that
\begin{align*}
  w_j^{(\delta)}|_{\partial^{(\pm\mathbf{e}_1)} S_k} &= \mathrm{const} = a q_{j,k}^\pm \in \R^m, \qquad\text{where} \\
  \partial^{(\pm\mathbf{e}_1)} S_k &= \setb{ x \in Q_{\mathbf{e}_1} }{ (x-z_k) \cdot \mathbf{e}_1 = \pm r_k },  \qquad  q_{j,k}^\pm \in \R,
\end{align*}
and $q_{j,k}^+ = q_{j,k+1}^-$ ($k = 1,\ldots,N-1$). Consider $w_j^{(\delta)}$ to be extended periodically on all planes orthogonal to $\xi = \mathbf{e}_1$ and define $v_j^{(\delta)} \in \Wrm^{1,1}(Q_{\mathbf{e}_1};\R^m)$ through
\[
  v_j^{(\delta)}(x) := \begin{cases} 
    w_j^{(\delta)}(z_k + j(x-z_k))      & \text{if $x \in S(z_k, r_k/j)$,} \\
    a q_{j,k}^+ = a q_{j,k+1}^-  & \text{if $x^1-z_k^1 > r_k/j$ and $x^1-z_{k+1}^1 < -r_{k+1}/j$,}
  \end{cases}
\]
for which we get
\[
  \nabla v_j^{(\delta)}(x) = \begin{cases} 
    j \nabla w_j^{(\delta)}(z_k + j(x-z_k))    & \text{if $x \in S(z_k, r_k/j)$,} \\
    0  & \text{if $x^1-z_k^1 > r_k/j$ and $x^1-z_{k+1}^1 < -r_{k+1}/j$.}
  \end{cases}
\]
It is easy to see that the sequence $(v_j^{(\delta)})_j$ is uniformly bounded in $\Wrm^{1,1}(Q_{\mathbf{e}_1};\R^m)$ and that $\nabla v_j^{(\delta)} \to 0$ in measure and almost everywhere as $j\to\infty$. Thus, $(\nabla v_j^{(\delta)})$ generates a Young measure $\nu^{(\delta)} \in \GY(Q_{\mathbf{e}_1};\R^{m \times d})$ with oscillation measure $\nu^{(\delta)}_x = \delta_0$ almost everywhere (cf.~Proposition~2.22 in~\cite{Rind11PhD}).

Now apply once more the averaging principle, Proposition~\ref{prop:averaging}, separately in each $S_k$, to get a gradient Young measure $\overline{\nu}^{(\delta)} \in \GY(Q_{\mathbf{e}_1};\R^{m \times d})$ with
\[
  (\overline{\nu}^{(\delta)})_x = \delta_0 \quad\text{a.e.,}\qquad  
  \text{$\lambda_{\overline{\nu}^{(\delta)}}, (\overline{\nu}^{(\delta)})_x^\infty$ are $\mathbf{e}_1$-directional,}\qquad
  [\overline{\nu}^{(\delta)}] = (a \otimes \xi) \, \Lcal^d \restrict Q_{\mathbf{e}_1},
\]
where the last equality uses the fact that averaging creates \enquote{diffuse} concentrations, see Example~4 in~\cite{KriRin10CGGY}.

We also have $\lambda_{\overline{\nu}^{(\delta)}}(\partial Q_{\mathbf{e}_1}) = 0$, hence
\[
  \overline{\nu}^{(\delta)} \in \GY_0^\sing(a \otimes \xi)
\]
and this Young measure acts on integrands of the form $\ONE \otimes \sum_k h_k \ONE_{S_k} \in \Ebf^\sing(\xi)$ (in particular, all $h_k$ are positively $1$-homogeneous) as
\begin{align*}
  \ddprb{\ONE \otimes \sum_k h_k \ONE_{S_k}, \overline{\nu}^{(\delta)}}
    &= \sum_{k = 1}^N \lim_{j \to \infty} \int_{S(z_k,r_k/j)} h_k \bigl( j \nabla w_j^{(\delta)}(z_k + j(x-z_k)) \bigr) \dd x \\
  &= \sum_{k = 1}^N \lim_{j \to \infty} \int_{S_k} h_k \bigl( \nabla w_j^{(\delta)}(y) \bigr) \dd y \\
  &= \ddprb{\ONE \otimes \sum_k h_k \ONE_{S_k}, \mu^{(\delta)}}.
\end{align*}
Here, to get fromt he first to the second line we employed a change of variables and the periodicity of $\nabla w_j^{(\delta)}$ on planes orthogonal to $\xi = \mathbf{e}_1$.

Combining this with the previous estimates~\eqref{eq:fH_sigma_est},~\eqref{eq:mu_delta}, we arrive at
\begin{align*}
  \ddprb{f_H,\sigma} &\geq \sum_{k = 1}^N \ddprb{g_\eps(z_k,\frarg), \mu^{(\delta)} \restrict S_k } - (\eps+\delta) \ddprb{\ONE \otimes (1+\abs{\frarg}),\sigma} \\
  &=\sum_{k = 1}^N \ddprb{g_\eps(z_k,\frarg), \overline{\nu}^{(\delta)} \restrict S_k } - (\eps+\delta) \ddprb{\ONE \otimes (1+\abs{\frarg}),\sigma} \\
  &\geq \ddprb{g_\eps, \overline{\nu}^{(\delta)}} - \delta \ddprb{\ONE \otimes (1+\abs{\frarg}),\overline{\nu}^{(\delta)}} - (\eps+\delta) \ddprb{\ONE \otimes (1+\abs{\frarg}),\sigma}.
\end{align*}
Now, from the $\delta$-independent $\Wrm^{1,1}$-bound for the functions $w_j^{(\delta)}$ (see above), which propagates to $v_j^{(\delta)}$, we may conclude that the second term vanishes as $\delta \todown 0$. Thus,
\begin{align*}
  \ddprb{f_H,\sigma} &\geq \limsup_{\delta \todown 0} \, \ddprb{g_\eps,\overline{\nu}^{(\delta)}} - \eps \ddprb{\ONE \otimes (1+\abs{\frarg}),\sigma} \\
  &\geq \limsup_{\delta \todown 0} \, \ddprb{f_H,\overline{\nu}^{(\delta)}} - \eps \ddprb{\ONE \otimes (1+\abs{\frarg}),\sigma} \\
  &\geq \kappa - \eps \ddprb{\ONE \otimes \abs{\frarg},\sigma},
\end{align*}
where in the last line we used that $\ddprb{f_H,\overline{\nu}^{(\delta)}} \geq \kappa$ since $\overline{\nu}^{(\delta)} \in \GY_0^\sing(a \otimes \xi) \subset H$. Finally, letting $\eps \todown 0$, we conclude $\ddpr{f_H,\sigma} \geq \kappa$, i.e.\ $\sigma \in H$. Then the Hahn--Banach argument applies and the proof is finished.
\end{proof}

\subsection{The case $A_0 = 0$.} \label{ssc:sing_0}

In this case, we cannot infer anything about the concentration measure (like one-directionality). But since the underlying deformation is zero, the procedures of Section~\ref{ssc:sing_rank_one} still work with slight modifications. Therefore, we can proceed via the same strategy, but working in the following spaces:
\begin{align*}
  \Ebf^\sing(\Sbb^{d-1}) &:= \Ebf^\sing(Q(0,1);\R^{m \times d}),\\
  \Ybf^\sing(0) &:= \setb{ \sigma \in \Ybf^\sing(Q(0,1);\R^{m \times d}) }{ \text{$[\sigma] = 0$} }, \\
  \GY^\sing(0) &:= \Ybf^\sing(0) \cap \GY^\sing(Q(0,1);\R^{m \times d}), \\
  \mathbf{[G]}\Ybf_0^\sing(0) &:=\setb{ \sigma \in \mathbf{[G]}\Ybf^\sing(0) }{ \lambda_\sigma(\partial Q(0,1)) = 0 }.
\end{align*}
The result, for which we omit the proof, is the following:

\begin{proposition} \label{prop:GY_sing_0_sufficient}
Let $\sigma \in \Ybf_0^\sing(0)$. If
\[
  0 = g(0) = g \bigl( \dprb{\id,\sigma_y^\infty} \bigr) \leq \dprb{g, \sigma_y^\infty}
\]
for all quasiconvex and positively $1$-homogeneous $g \in \Crm(\R^{m \times d})$ and $\lambda_\sigma$-almost every $y \in Q(0,1)$, then $\sigma \in \GY_0^\sing(0)$.
\end{proposition}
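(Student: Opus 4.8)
The plan is to run, almost verbatim, the Hahn--Banach argument of the proof of Proposition~\ref{prop:GY_sing_rank_1_sufficient}, replacing the matrix $a \otimes \xi$ by $0$, the $\xi$-directional slices by arbitrary small cubes, and the test-function space $\Ebf^\sing(\xi)$ by $\Ebf^\sing(\Sbb^{d-1}) = \Ebf^\sing(Q(0,1);\R^{m \times d})$. First I would establish the analogue of Lemma~\ref{lem:GY_sing_properties}: that $\GY^\sing(0)$ is weakly*-closed and $\GY_0^\sing(0)$ is convex, both viewed inside $\Ebf^\sing(\Sbb^{d-1})^*$. Weak* closedness goes through as in the cited lemma --- pick a countable determining family $\{\phi_k \otimes h_k\} \subset \Ebf^\sing(\Sbb^{d-1})$, extract using the uniform mass bound (testing against $\ONE \otimes \absn{\frarg}$), apply Poincar\'e's inequality to the generating gradients, and note that $[\sigma] = 0$ is preserved in the limit --- while the step proving $\xi$-directionality is simply deleted. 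For convexity I would reduce, via the approximation principle (Proposition~\ref{prop:approximation}), to convex combinations of homogeneous averaged Young measures supported on a common grid of cubes, and then glue generating sequences using Lemma~\ref{lem:boundary_adjustment}; since every underlying deformation is here the zero function, there is no jump to annihilate between adjacent cubes and hence no staircase correction is required, which makes the gluing strictly easier than in the rank-one case of Section~\ref{ssc:sing_rank_one}.

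Granting these two structural facts, the Hahn--Banach Theorem reduces the claim to showing $\ddprn{f_H,\sigma} \geq \kappa$ for every weakly*-closed affine half-space $H = \setn{ e^* \in \Ebf^\sing(\Sbb^{d-1})^* }{ e^*(f_H) \geq \kappa }$, with $f_H \in \Ebf^\sing(\Sbb^{d-1})$, $\kappa \in \R$, containing $\GY_0^\sing(0)$. I would fix $\eps \in (0,1)$, set $g_\eps(x,A) := f_H(x,A) + \eps\absn{A} \in \Ebf^\sing(\Sbb^{d-1})$, and for $\delta > 0$ subdivide $Q(0,1)$ into small cubes $C_k$ ($k = 1,\dots,N$) with $\lambda_\sigma(\partial C_k) = 0$ and $\absn{Sg_\eps(x,A) - Sg_\eps(y,A)} \leq \delta$ on $C_k \times \cl{\Bbb^{m \times d}}$. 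In each $C_k$ I would pick a point $z_k$ such that $Qg_\eps(z_k,\frarg)$ is finite, quasiconvex and positively $1$-homogeneous and admits a recovery sequence $(\psi_j^{(k)}) \subset \Wrm_0^{1,\infty}(C_k;\R^m)$ (zero boundary values, corresponding to $A_0 x = 0$) with $\dashint_{C_k} g_\eps(z_k,\nabla\psi_j^{(k)}) \dd x \to Qg_\eps(z_k,0) = 0$ and a purely growth-dependent bound $\supmod_j \normn{\nabla\psi_j^{(k)}}_{\Lrm^1(C_k;\R^{m\times d})} \leq c(\eps)\absn{C_k}$; this is the verbatim counterpart of claims~(A)--(C) in the proof of Proposition~\ref{prop:GY_sing_rank_1_sufficient}. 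The only point needing genuine work is, as before, that $Qf_H(z,0) > -\infty$ for $z$ ranging over a dense subset of $Q(0,1)$: were this to fail on some open subcube, the envelope formula~\eqref{eq:Qh_inf_formula}, the generalized Riemann--Lebesgue Lemma (Corollary~\ref{cor:Riemann_Lebesgue}) with underlying affine deformation $A_0 x = 0$, and a gluing via Lemma~\ref{lem:boundary_adjustment} would produce $\mu \in \GY_0^\sing(0)$ with $\ddprn{f_H,\mu} < \kappa$, contradicting $\GY_0^\sing(0) \subset H$.

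To finish, I would assemble the estimate chain exactly as in Step~3 of the proof of Proposition~\ref{prop:GY_sing_rank_1_sufficient}. From $g_\eps(z_k,\frarg) \geq Qg_\eps(z_k,\frarg)$ and the hypothesis $0 = g(0) \leq \dprn{g,\sigma_y^\infty}$ applied to the quasiconvex, positively $1$-homogeneous function $g = Qg_\eps(z_k,\frarg)$ (whose value at $0$ is $Qg_\eps(z_k,0) = 0 \geq Qf_H(z_k,0) > -\infty$), one obtains $\int_{C_k}\dprn{g_\eps(z_k,\frarg),\sigma_y^\infty}\dd\lambda_\sigma \geq Qg_\eps(z_k,0)\,\lambda_\sigma(C_k) = 0$. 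I would then average the recovery sequences via Proposition~\ref{prop:averaging} (keeping $w_j^{(k)}|_{\partial C_k} = 0$ and not charging $\partial C_k$ in the limit), rescale each by the factor $\lambda_\sigma(C_k)/\absn{C_k}$ --- note that this preserves the zero boundary values, so here, in contrast to the rank-one case, no staircase term is needed --- and patch them into a single $w_j^{(\delta)} \colon Q(0,1) \to \R^m$ that is $\Wrm^{1,1}$-bounded with $\normn{\nabla w_j^{(\delta)}}_{\Lrm^1(Q(0,1);\R^{m\times d})} \leq c(\eps)\lambda_\sigma(Q(0,1))$, uniformly in $\delta$. Averaging the generated Young measure on each $C_k$ produces a piecewise homogeneous $\mu^{(\delta)} \in \GY_0^\sing(0)$, and chaining
\begin{align*}
  \ddprn{f_H,\sigma}
  &\geq \sum_k Qg_\eps(z_k,0)\,\lambda_\sigma(C_k) - (\eps+\delta)\ddprn{\ONE \otimes (1+\absn{\frarg}),\sigma} \\
  &\geq \ddprn{g_\eps,\mu^{(\delta)}} - \delta\ddprn{\ONE \otimes (1+\absn{\frarg}),\mu^{(\delta)}} - (\eps+\delta)\ddprn{\ONE \otimes (1+\absn{\frarg}),\sigma}
\end{align*}
together with $\ddprn{g_\eps,\mu^{(\delta)}} \geq \ddprn{f_H,\mu^{(\delta)}} \geq \kappa$ gives, after sending $\delta \todown 0$ (using the $\delta$-independent mass bound on $\mu^{(\delta)}$) and then $\eps \todown 0$, the desired inequality $\ddprn{f_H,\sigma} \geq \kappa$.

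The main obstacle is the convexity half of the analogue of Lemma~\ref{lem:GY_sing_properties}: with no directionality available on $\lambda_\sigma$ one must carry out the cube-by-cube gluing on a common refinement of the two grids and check that the patched generating sequences remain uniformly $\Wrm^{1,1}$-bounded and that the barycenter condition $[\mu^{(\delta)}] = 0$ is retained. This is precisely the place where the zero underlying deformation pays off, since it removes the staircase construction (and the associated directionality bookkeeping) that made the rank-one case delicate, so that this step is in fact technically lighter here than in Section~\ref{ssc:sing_rank_one}.
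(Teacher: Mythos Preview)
Your proposal is correct and follows precisely the approach the paper indicates: the paper omits the proof of Proposition~\ref{prop:GY_sing_0_sufficient} but states explicitly that ``since the underlying deformation is zero, the gluing procedures of Section~\ref{ssc:sing_rank_one} still work with slight modifications'' and that one proceeds ``via the same strategy'' in the spaces $\Ebf^\sing(\Sbb^{d-1})$, $\Ybf^\sing(0)$, $\GY_0^\sing(0)$. You have correctly identified all the relevant simplifications --- cubes in place of $\xi$-slices, zero boundary values removing the staircase correction, and $Qg_\eps(z_k,0)=0$ by positive $1$-homogeneity --- and your handling of the density step and the convexity lemma is in line with what the paper's sketch requires.
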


\section{Proof of the BV characterization theorem} \label{sc:proof}

We are now in a position to prove Proposition~\ref{prop:GYM_sufficient} and thus Theorem~\ref{thm:GYM_characterization}. So let $\nu$ be a Young measure as in the statement of this proposition.

\proofstep{Step 1.}
We first look at regular points. By Proposition~\ref{prop:localize_reg}, at $\Lcal^d$-almost every point $x_0 \in \Omega$, there exists a regular tangent Young measure $\sigma \in \Ybf^\reg(A_0)$ to $\nu$ at $x_0$, where
\[
  A_0 := \dprb{\id,\nu_{x_0}} + \dprb{\id,\nu_{x_0}^\infty} \frac{\di \lambda_\nu}{\di \Lcal^d}(x_0).
\]
Let $h \in \Crm(\R^{m \times d})$ be quasiconvex with linear growth at infinity. By the properties of $\sigma$ (notice in particular that $\frac{\di \lambda_\sigma}{\di \Lcal^d}(y) = \mathrm{const} = \frac{\di \lambda_\nu}{\di \Lcal^d}(x_0)$ a.e.), and assumption~(i) in the statement of Theorem~\ref{thm:GYM_characterization}, we get
\begin{align*}
  h \biggl( \dprb{\id,\sigma_y} + \dprb{\id,\sigma_y^\infty} \frac{\di \lambda_\sigma}{\di \Lcal^d}(y) \biggr)
    &= h \biggl( \dprb{\id,\nu_{x_0}} + \dprb{\id,\nu_{x_0}^\infty} \frac{\di \lambda_\nu}{\di \Lcal^d}(x_0) \biggr) \\
    &\leq \dprb{h,\nu_{x_0}} + \dprb{h^\#,\nu_{x_0}^\infty} \frac{\di \lambda_\nu}{\di \Lcal^d}(x_0) \\
    &= \dprb{h,\sigma_y} + \dprb{h^\#,\sigma_y^\infty} \frac{\di \lambda_\sigma}{\di \Lcal^d}(y)
\end{align*}
for $\Lcal^d$-almost every $y \in \Bbb^d$. Hence,~\eqref{eq:charact_cond_reg} is satisfied, and we may apply Proposition~\ref{prop:GY_reg_sufficient} to infer that $\sigma$ is a regular gradient Young measure, $\sigma \in \GY^\reg(A_0)$.

From a close inspection of the construction of $\sigma$ in the localization principle at regular points, Proposition~\ref{prop:localize_reg} (cf.~\cite{Rind11PhD,Rind11LSIF}), we infer that $\sigma = \wslim_{n\to\infty} \sigma^{(r_n)}$ for a sequence $r_n \todown 0$, where $\sigma^{(r_n)}$ is given by
\[
  \ddprb{\phi \otimes h, \sigma^{(r_n)}} = \frac{1}{r_n^d} \ddprB{\phi\Bigl(\frac{\frarg-x_0}{r_n}\Bigr) \otimes h,\nu},
  \qquad \phi \otimes h \in \Ebf(\Bbb^d;\R^{m \times d}).
\]
In particular, since $\nu$ has underlying deformation $u \in \BV(\Omega;\R^m)$, the Young measure $\sigma^{(r_n)}$ can be assumed to have the underlying deformation
\begin{align*}
  u^{(n)}(y) &= \frac{u(x_0 + r_ny) - \tilde{u}(x_0)}{r_n},  \qquad y \in \Bbb^d,  \qquad\text{so}\\
  Du^{(n)} &= r_n^{-d} T_*^{(x_0,r_n)} Du,
\end{align*}
where $\tilde{u}(x_0)$ is the value of the precise representative of $u$ at $x_0$ and $T_*^{(x_0,r_n)} Du := Du(x_0 + r_n \frarg)$ is the pushforward of $Du$ under the affine transformation $T^{(x_0,r_n)}(x) := (x-x_0)/r_n$. We infer $Du^{(n)} \toweakstar [\sigma] = Dv$ for $v(y) := \nabla u(x_0)y$. Additionally, we even have $u^{(n)} \to v$ with respect to the so-called strict convergence, meaning that $u^{(n)} \to v$ in $\Lrm^1(\Bbb^d)$ and $\abs{Du^{(n)}}(\Bbb^d) \to \abs{Dv}(\Bbb^d)$. This can be seen once again from the proof of the regular localization principle. More precisely, by the assumptions in the proof of said result,
\begin{align*}
  \lim_{k\to\infty} \abs{Du^{(n)}}(\Bbb^d) &= \lim_{k\to\infty} \frac{\abs{Du}(B(x_0,r_n))}{r_n^d} = \omega_d \liminf_{n\to\infty} \frac{\abs{Du}(B(x_0,r_n))}{\abs{B(x_0,r_n)}} \\
  &= \omega_d \abs{\nabla u(x_0)} = \abs{Dv}(\Bbb^d).
\end{align*}
Thus, the strict continuity of the trace operator, see Section~3.8 in~\cite{AmFuPa00FBVF}, implies
\[
  \int_{\partial \Bbb^d} \abs{v - u^{(n)}} \dd \Hcal^{d-1} \to 0.
\]

Let $(v_j) \subset \Wrm^{1,1}(\Bbb^d;\R^m)$ be a norm-bounded generating sequence of $\sigma$ that additionally satisfies $v_j|_{\partial \Bbb^d} = v|_{\partial \Bbb^d}$ (this can always be achieved by Lemma~\ref{lem:boundary_adjustment}). Set
\begin{equation} \label{eq:vjn_def_reg}
  v_j^{(n)}(x) := r_n v_j \Bigl(\frac{x-x_0}{r_n}\Bigr) + \tilde{u}(x_0),  \qquad x \in B(x_0,r_n).
\end{equation}
The above considerations yield by a change of variables,
\begin{align}
  \int_{\partial B(x_0,r_n)} \abs{v_j^{(n)} - u} \dd \Hcal^{d-1} &= r_n^{d-1} \int_{\partial \Bbb^d} \abs{r_n v(y) - u(x_0 + r_ny) + \tilde{u}(x_0)} \dd \Hcal^{d-1}(y) \notag\\
  &= r_n^d \int_{\partial \Bbb^d} \abs{v - u^{(n)}} \dd \Hcal^{d-1} = r_n^d\SmallO(1).
  \label{eq:v_reg_boundary_est}
\end{align}
Here, $\SmallO(1)$ stands for a quantity that vanishes as $n \to \infty$ (or, equivalently, $r_n \todown 0$).

\proofstep{Step 2.}
Turning our attention to singular points, we get from Proposition~\ref{prop:localize_sing} that at $\lambda_\nu^s$-almost every $x_0 \in \Omega$ we can find a singular tangent Young measure $\sigma \in \Ybf^\sing_\loc(\R^d;\R^{m \times d})$ to $\nu$ at $x_0$. Since $[\nu] = Du$, we have
\[
  D^s u = [\nu]^s = \dpr{\id,\nu_x^\infty} \, \lambda_\nu^s(\di x)
\]
and by Alberti's Rank-One Theorem~\cite{Albe93ROPD}, see for example Theorem~3.94 in~\cite{AmFuPa00FBVF}, we get that for $\lambda_\nu^s$-almost every $x_0$,
\[
  \rank\, \dprb{\id,\nu_{x_0}^\infty} \leq 1.
\]
Moreover, if $\rank\, \dprb{\id,\nu_{x_0}^\infty} = 1$, that is,
\[
  A_0 := \dprb{\id,\nu_{x_0}^\infty} = a \otimes \xi,
  \qquad \text{for some $a \in \R^m \setminus \{0\}$, $\xi \in \Sbb^{d-1}$,}
\]
then a \enquote{rigidity} corollary to Alberti's Theorem (Theorem~3.95 in~\cite{AmFuPa00FBVF} or, alternatively, Lemma~3.2 in~\cite{Rind12LSYM}) implies that $\lambda_\sigma^s$ is one-directional in the sense that $\lambda_\sigma^s(A + h) = \lambda_\sigma^s(A)$ for any Borel set $A \subset \R^d$ and $h \perp \xi$. In fact, any underlying deformation $v \in \BV_\loc(\R^d;\R^m)$ of $\sigma$ satisfies $Dv = A_0 \abs{Dv}$, this is a generic property of blow-ups, cf.~Proposition~3.2 in~\cite{Rind11PhD}.

Hence, we have that $\sigma \in \Ybf^\sing(A_0)$ for one of the spaces $\Ybf^\sing(\ldots)$ from either Section~\ref{ssc:sing_rank_one} or~\ref{ssc:sing_0}. Of course, $\sigma \in \Ybf^\sing(A_0)$ really means that the restriction of $\sigma$ to the appropriate cube $Q_\xi(0,1)$ or $Q(0,1)$ lies in $\Ybf^\sing(A_0)$, where $Q_\xi(0,1)$ is the unit cube with one face orthogonal to $\xi \in \Sbb^{d-1}$. Moreover, since we started from a tangent Young measure on all of $\R^d$, we may even assume $\sigma \in \Ybf_0^\sing(A_0)$, i.e.\ $\lambda_\sigma(\partial Q) = 0$, by a simple rescaling argument.

Let $g \in \Crm(\R^{m \times d})$ be quasiconvex and positively $1$-homogeneous. Then, from~\eqref{eq:loc_sing} and assumption~(ii) in Theorem~\ref{thm:GYM_characterization} we get
\begin{align*}
  g \bigl( \dprb{\id,\sigma_y^\infty} \bigr) = g \bigl( \dprb{\id,\nu_{x_0}^\infty} \bigr)
  \leq \dprb{g, \nu_{x_0}^\infty} = \dprb{g, \sigma_y^\infty}
\end{align*}
for $\lambda_\sigma$-almost every $y \in Q$. Therefore, one of the two Propositions~\ref{prop:GY_sing_rank_1_sufficient},~\ref{prop:GY_sing_0_sufficient} (which particular one is applicable depends on $A_0$) implies that $\sigma$ is a gradient Young measure, $\sigma \in \GY_0^\sing(A_0)$. Notice that here the representation of limits through the Young measure holds only for positively $1$-homogeneous $f = f^\infty \in \Ebf^\sing(Q;\R^{m \times d})$, but this will suffice later on.

Again, $\sigma$ is the weak* limit of Young measures $\sigma^{(r_n)}$ for a sequence $r_n \todown 0$, where this time the $\sigma^{(r_n)}$ are given by
\begin{equation} \label{eq:sing_tangent_YM}
  \ddprb{\phi \otimes h, \sigma^{(r_n)}} = c_n \ddprB{\phi\Bigl(\frac{\frarg-x_0}{r_n}\Bigr) \otimes h,\nu},
  \qquad c_n = \frac{1}{\ddprn{\ONE_{Q(x_0,r_n)} \otimes \abs{\frarg},\nu}},
\end{equation}
for all $\phi \otimes h \in \Ebf(Q;\R^{m \times d})$. In particular, for the underlying deformation $u_n \in \BV(Q;\R^m)$ of $\sigma^{(r_n)}$ we may choose
\begin{align*}
  u^{(n)}(y) &= r_n^{d-1}c_n \bigl(u(x_0+r_n y) - \bar{u}^{(n)}\bigr),  \qquad y \in Q,  \qquad\text{so} \\
  Du^{(n)}   &= c_n T_*^{(x_0,r_n)} Du,
\end{align*}
where $\bar{u}^{(n)} := \dashint_{Q(x_0,r_n)} u \dd x$. Moreover, we can assume that $u^{(n)} \to v$ strictly with $[\sigma] = Dv$ for $v \in \BV(Q;\R^m)$. This follows since $\abs{Du^{(n)}} \toweakstar \abs{Dv}$ by properties of blow-ups (see Proposition~3.2 in~\cite{Rind11PhD}) and also $\abs{Dv}(\partial Q) \leq \lambda_\sigma(\partial Q) = 0$, hence $\abs{Du^{(n)}}(Q) \to \abs{Dv}(Q)$ by standard results in measure theory. A detailed proof can be found in Lemma~3.1 of~\cite{Rind12LSYM}.

For a generating sequence $(v_j) \subset \Wrm^{1,1}(Q;\R^m)$ of $\sigma$ with the additional property $v_j|_{\partial Q} = v|_{\partial Q}$, we define
\begin{equation} \label{eq:vjn_def_sing}
  v_j^{(n)}(x) := \frac{1}{r_n^{d-1}c_n} v_j \Bigl(\frac{x-x_0}{r_n}\Bigr) + \bar{u}^{(n)},  \qquad x \in Q(x_0,r_n).
\end{equation}
By the strict continuity of the trace operator, we may therefore derive
\begin{align*}
  \int_{\partial Q(x_0,r_n)} \abs{v_j^{(n)} - u} \dd \Hcal^{d-1} &= r_n^{d-1} \int_{\partial Q} \abs{r_n^{1-d} c_n^{-1} v(y) - u(x_0 + r_ny) + \bar{u}^{(n)}} \dd \Hcal^{d-1}(y) \\
  &= \frac{1}{c_n} \int_{\partial Q} \abs{v - u^{(n)}} \dd \Hcal^{d-1} = c_n^{-1}\SmallO(1).
\end{align*}

\proofstep{Step 3.}
We collect all regular points from Step~1 in the set $R \subset \Omega$ and all singular points from Step~2 in the set $S \subset \Omega$; note that we can assume that both $R$ and $S$ are Borel sets. Then, $(\Lcal^d + \lambda_\nu)(\Omega \setminus (R \cup S)) = 0$ and $R, S$ are disjoint. Further, take a Young measure-determining set of integrands $\{\phi_\ell \otimes h_\ell\}_\ell \subset \Ebf(\Omega;\R^{m \times d})$ as in Lemma~\ref{lem:tensor_products_determine_YM}.

Let $k \in \N$. For each $x \in R$, there exists a regular tangent Young measure $\sigma = \sigma_x \in \GY^\reg(\dpr{\id,\nu_x}+\dpr{\id,\nu_x^\infty}\frac{\di \lambda_\nu}{\di \Lcal^d}(x))$, which is generated by a sequence $(v_j) \subset \Wrm^{1,1}(\Bbb^d;\R^m)$ as in Step~1 (of course, $\sigma$ and $(v_j)$ depend on $x$, but here and in the following we often suppress this dependence for ease of notation if $x$ is fixed and clear from the context); we also consider $v_j^{(n)}$ as defined in~\eqref{eq:vjn_def_reg}. Pick $N = N(x) \in \N$ so large that for $n \geq N$ and all $j \in \N$,
\[
  \int_{\partial B(x,r_n)} \abs{v_j^{(n)} - u} \dd \Hcal^{d-1} \leq \frac{1}{k} \ddprn{\ONE_{B(x,r_n)} \otimes \abs{\frarg},\nu}.
\]
This is possible by~\eqref{eq:v_reg_boundary_est} and the fact that $\ddprn{\ONE_{B(x,r)} \otimes \abs{\frarg},\nu}$ for $x \in R$ asymptotically behaves like $r^d$ as $r \todown 0$ (this, again, can be seen from the proof of the regular localization principle).

Similarly, for all $x \in S$, Step~2 showed the existence of a singular tangent Young measure $\sigma \in \GY_0^\sing(\dpr{\id,\nu_x^\infty}) \subset \Ybf^\sing(Q;\R^{m \times d})$, which is generated by a sequence $(v_j) \subset \Wrm^{1,1}(Q;\R^m)$. Here, $Q$ is either $Q_\xi(0,1)$ if $\dpr{\id,\nu_x^\infty} = a \otimes \xi$ for $a \in \R^m \setminus \{0\}$, $\xi \in \Sbb^{d-1}$, or the usual unit cube $Q(0,1)$ otherwise. For $v_j^{(n)}$ as defined in~\eqref{eq:vjn_def_sing}, there again is $N = N(x) \in \N$ such that for $n \geq N$ we have
\[
  \int_{\partial Q(x,r_n)} \abs{v_j^{(n)} - u} \dd \Hcal^{d-1} \leq \frac{1}{c_n k} = \frac{1}{k} \ddprn{\ONE_{Q(x,r_n)} \otimes \abs{\frarg},\nu}.
\]

The collection of sets $B(x,r_n)$ for $x \in R$, and $Q_{\xi(x)}(x,r_n)$ for $x \in S$ with $r_n \in (0,1/k)$ small enough such that $n \geq N(x)$, is a cover of $\Omega$ that satisfies the assumptions of the Morse Covering Theorem~\cite{Mors47PB} (cited as Theorem~5.51 in~\cite{AmFuPa00FBVF}). Hence, by said theorem, we may find a countable disjoint collection $(K_i(a_i,r_i))_i$ that covers $\Omega$ up to an $(\Lcal^d + \lambda_\nu)$-negligible set, where $K_i(a_i,r_i) := a_i + r_i K_i$ and $K_i$ is either the unit ball $\Bbb^d$ if $a_i \in R$, or an ($a_i$-specific) cube $Q_i = Q_{\xi(a_i)}(a_i,r_i)$ if $a_i \in S$, and $r_i$ is such that $r_i = r_n(a_i)$ for some $n \geq N(a_i)$. Here, $r_n(a_i)$ refers to the $r_n$ associated with the point $a_i$, similarly for $N_i = N(a_i)$, $\sigma_i = \sigma(a_i)$ and $c_i = c_n(a_i)$; in particular, we denote the (regular or singular) tangent Young measure at the point $a_i$ by $\sigma_i$. At singular points $a_i \in S$ we may additionally require that
\[
  \lambda_\nu(\partial Q_i) = 0,  \qquad
  \lambda_{\sigma_i}(\partial Q) = 0
\]
and
\begin{equation} \label{eq:sing_tan_conv}
  \absb{\lambda_{\sigma_i}(Q_i) - (c_i T_*^{(a_i,r_i)}\lambda_\nu^s)(Q_i)} \leq \frac{1}{k}.
\end{equation}
The last condition can be satisfied because $c_n T_*^{(a_i,r_n)}\lambda_\nu^s \toweakstar \lambda_{\sigma_i}$ as $n \to \infty$.

Denote the generating sequence of $\sigma_i$ by $(v_j^{(i)})_j \subset \Wrm^{1,1}(K_i;\R^m)$. Then we choose $j(i,k)$ so large that
\begin{equation} \label{eq:covering_convergence}
  \absBB{ \int_{K_i} h_\ell(\nabla v_{j(i,k)}^{(i)}(x)) \dd x - \ddprb{\ONE_{K_i} \otimes h_\ell,\sigma_i} } \leq \frac{1}{k}  
  \qquad\text{for all $\ell \leq k$.}
\end{equation}
For singular points $a_i \in S$, the preceding requirement only needs to hold for $h_\ell = h_\ell^\infty$ from the subset of $h_\ell$'s that are positively $1$-homogeneous (it holds for such $h_\ell = h_\ell^\infty$ by definition of the weak* convergence in $\GY^\sing(Q;\R^{m \times d})$); in this context recall that all $h_\ell$ either have compact support or are positively $1$-homogeneous.

Define
\[
  w_k(x) := \begin{cases}
              r_i v_{j(i,k)}^{(i)}\bigl(\frac{x-a_i}{r_i}\bigr) + \tilde{u}(a_i) & \text{if $x \in B(a_i,r_i)$, $a_i \in R$,} \\
              r_i^{1-d}c_i^{-1} v_{j(i,k)}^{(i)}\bigl(\frac{x-a_i}{r_i}\bigr) + \bar{u}^{(i)} & \text{if $x \in Q_i(a_i,r_i)$, $a_i \in S$,}
            \end{cases}
\]
where $\tilde{u}(a_i)$ is the value of the precise representative $\tilde{u}$ of $u$ at $a_i$ (which is defined at $a_i \in R$), and $\bar{u}^{(i)} := \dashint_{Q_i(a_i,r_i)} u \dd x$. For the gradient of the $w_k$ we get
\[
  Dw_k = \nabla w_k \, \Lcal^d \restrict \Omega + D^s w_k
\]
with
\[
  \nabla w_k(x) = \begin{cases}
              \nabla v_{j(i,k)}^{(i)}\bigl(\frac{x-a_i}{r_i}\bigr)  & \text{if $x \in B(a_i,r_i)$, $a_i \in R$,} \\
              r_i^{-d}c_i^{-1} \nabla v_{j(i,k)}^{(i)}\bigl(\frac{x-a_i}{r_i}\bigr)  & \text{if $x \in Q_i(a_i,r_i)$, $a_i \in S$.}
            \end{cases}
\]

We can estimate the singular part $D^s w_k$ as follows (by the triangle inequality and the choice of covering):
\begin{align*}
  \abs{D^s w_k}(\Omega) &\leq \sum_i \int_{\partial K_i(a_i,r_i)} \abs{w_k - u} \dd \Hcal^{d-1} 
    \leq \sum_i \frac{1}{k} \ddprn{\ONE_{K_i(a_i,r_i)} \otimes \abs{\frarg},\nu}  \\
  &= \frac{1}{k} \ddprn{\ONE \otimes \abs{\frarg},\nu}.
\end{align*}
As an immediate consequence, for all $f \in \Ebf(\Omega;\R^{m \times d})$ with linear growth constant $M > 0$,
\begin{align*}
  \absBB{ \int_\Omega f^\infty\biggl(x,\frac{\di D^s w_k}{\di \abs{D^s w_k}}(x)\biggr) \dd \abs{D^s w_k}(x)} \leq \frac{M}{k} \ddprb{\ONE \otimes \abs{\frarg},\nu}.
\end{align*}
This estimate implies that to determine the Young measure generated by $Dw_k$ it suffices to consider the absolutely continuous part $\nabla w_k \, \Lcal^d \restrict \Omega$, i.e.\ for $\ell \leq k$ we need to identify the limit as $k \to \infty$ of
\begin{equation} \label{eq:gluing_conv}
\begin{aligned}
  &\int_\Omega \phi_\ell(x) h_\ell(\nabla w_k(x)) \dd x \\
  &\qquad = \sum_{i \colon a_i \in R} \int_{B(a_i,r_i)} \phi_\ell(x) h_\ell \biggl(\nabla v_{j(i,k)}^{(i)}\Bigl(\frac{x-a_i}{r_i}\Bigr)\biggr) \dd x \\
  &\qquad\qquad + \sum_{i \colon a_i \in S} \int_{Q_i(a_i,r_i)} \phi_\ell(x) h_\ell \Bigl(r_i^{-d}c_i^{-1} \nabla v_{j(i,k)}^{(i)}\Bigl(\frac{x-a_i}{r_i}\Bigr)\Bigr) \dd x.
\end{aligned}
\end{equation}

We introduce the following sets (recall that the $a_i$ and $r_i$ depend on the value of $k$, despite this being suppressed in the notation):
\[
  \Rcal_k = \bigcup_{a_i \in R} B(a_i,r_i),  \qquad \Scal_k := \bigcup_{a_i \in S} Q_i(a_i,r_i).
\]

\proofstep{Step 4.}
Assume for now that $h_\ell$ has compact support in $\R^{m \times d}$. Notice that every $a_i \in R$ is a Lebesgue point for the function
\[
  x \mapsto \phi_\ell(x) \biggl( \dprb{h_\ell,\nu_x} + \dprb{h_\ell^\infty,\nu_x^\infty} \frac{\di \lambda_\nu}{\di \Lcal^d}(x) \biggr)
\]
as a consequence of the result on regular localization, Proposition~\ref{prop:localize_reg}. Then, using~\eqref{eq:covering_convergence} for regular points, the integrals in~\eqref{eq:gluing_conv} for $a_i \in R$ satisfy
\begin{align}
  &\int_{B(a_i,r_i)} \phi_\ell(x) h_\ell \biggl(\nabla v_{j(i,k)}^{(i)}\Bigl(\frac{x-a_i}{r_i}\Bigr)\biggr) \dd x  \notag\\
  &\qquad = r_i^d \phi_\ell(a_i) \int_{\Bbb^d} h_\ell(\nabla v_{j(i,k)}^{(i)}(y)) \dd y + \underbrace{r_i^d \SmallO(1) \biggl( 1 + \int_{\Bbb^d} \absb{\nabla v_{j(i,k)}^{(i)}(y)} \dd y \biggr)}_{=: E_i} \notag\\
  &\qquad = r_i^d \phi_\ell(a_i) \ddprb{\ONE_{\Bbb^d} \otimes h_\ell,\sigma_i} + E_i \notag\\
  &\qquad = r_i^d \int_{\Bbb^d} \phi_\ell(a_i) \biggl( \dprb{h_\ell,\nu_{a_i}} + \dprb{h_\ell^\infty,\nu_{a_i}^\infty} \frac{\di \lambda_\nu}{\di \Lcal^d}(a_i) \biggr) \dd y + E_i \notag\\
  &\qquad = r_i^d \int_{\Bbb^d} \phi_\ell(a_i + r_i y) \biggl( \dprb{h_\ell,\nu_{a_i + r_i y}} + \dprb{h_\ell^\infty,\nu_{a_i + r_i y}^\infty} \frac{\di \lambda_\nu}{\di \Lcal^d}(a_i + r_i y) \biggr) \dd y + E_i \notag\\
  &\qquad = \int_{B(a_i,r_i)} \phi_\ell(x) \biggl( \dprb{h_\ell,\nu_x} + \dprb{h_\ell^\infty,\nu_x^\infty} \frac{\di \lambda_\nu}{\di \Lcal^d}(x) \biggr) \dd x + E_i.  \label{eq:main_est_reg}
\end{align}
Here, the $\ell$-dependent term $\SmallO(1)$ goes to zero as $k \to \infty$ (and absorbs all constants, including $\norm{\phi_\ell}_\infty$). Note that the constant in the error term $E_i$ may change from line to line. Changing variables,
\[
 E_i = \SmallO(1) \int_{B(a_i,r_i)} 1 + \absB{\nabla v_{j(i,k)}^{(i)}\Bigl(\frac{x-a_i}{r_i}\Bigr)} \dd x.
\]
In fact, observe that for $\phi_\ell \equiv 1$ and $h_\ell = \abs{\frarg}$ we can apply an analogous reasoning to get
\[
  \int_{B(a_i,r_i)} \absB{\nabla v_{j(i,k)}^{(i)}\Bigl(\frac{x-a_i}{r_i}\Bigr)} \dd x = \ddprb{\ONE_{B(a_i,r_i)} \otimes \abs{\frarg},\nu} + \SmallO(1)\abs{B(a_i,r_i)},
\]
whereby $\norm{\nabla w_k}_{\Lrm^1(\Rcal_k;\R^{m \times d})}$ is uniformly (in $k$) bounded. Plugging~\eqref{eq:main_est_reg} into~\eqref{eq:gluing_conv} and taking into consideration the bound just proved, we have for fixed $\ell$ such that $h_\ell$ has compact support in $\R^{m \times d}$ (whereby it is bounded) 
\begin{align*}
  &\int_\Omega \phi_\ell(x) h_\ell(\nabla w_k(x)) \dd x \\
  &\qquad = \sum_{i \colon a_i \in R} \int_{B(a_i,r_i)} \phi_\ell(x) \biggl( \dprb{h_\ell,\nu_x} + \dprb{h_\ell^\infty,\nu_x^\infty} \frac{\di \lambda_\nu}{\di \Lcal^d}(x) \biggr) \dd x \\
  &\qquad\qquad + \SmallO(1) \bigl[ \abs{\Rcal_k} + \norm{\nabla w_k}_{\Lrm^1(\Rcal_k;\R^{m \times d})} \bigr]
    + \abs{\Scal_k} \cdot \norm{\phi_\ell h_\ell}_\infty \\
  &\qquad = \int_\Omega \phi_\ell(x) \biggl( \dprb{h_\ell,\nu_x} + \dprb{h_\ell^\infty,\nu_x^\infty} \frac{\di \lambda_\nu}{\di \Lcal^d}(x) \biggr) \dd x
    + \SmallO(1)
\end{align*}
since $\abs{\Scal_k} \to 0$ as $k \to \infty$. Thus, we have shown
\begin{equation} \label{eq:main_conv_reg}
  \lim_{k \to \infty} \int_\Omega \phi_\ell(x) h_\ell(\nabla w_k(x)) \dd x = \ddprb{\phi_\ell \otimes h_\ell,\nu}
\end{equation}
for all $\ell \in \N$ such that $h_\ell$ has compact support.

\proofstep{Step 5.}
Next, we recall from Lemma~\ref{lem:tensor_products_determine_YM} that all $h_\ell$ without compact support are in fact positively $1$-homogeneous. Moreover, the proof of the singular localization principle entails that every $a_i \in S$ is a $\lambda_\nu^s$-Lebesgue point of the function
\[
  x \mapsto \dprb{h_\ell^\infty,\nu_x^\infty}.
\]
Thus, for such $h_\ell$ and $a_i \in S$ we get, using~\eqref{eq:sing_tan_conv} and~\eqref{eq:covering_convergence} for singular points as well as the assertions above,
\begin{align}
  &\int_{Q_i(a_i,r_i)} \phi_\ell(x) h_\ell \Bigl(r_i^{-d} c_i^{-1} \nabla v_{j(i,k)}^{(i)}\Bigl(\frac{x-a_i}{r_i}\Bigr)\Bigr) \dd x  \notag\\
  &\qquad = \frac{1}{c_i} \phi_\ell(a_i) \int_{Q_i} h_\ell(\nabla v_{j(i,k)}^{(i)}(y)) \dd y + \underbrace{\frac{\SmallO(1)}{c_i} \biggl( 1 + \int_{Q_i} \absb{\nabla v_{j(i,k)}^{(i)}(y)} \dd y \biggr)}_{=: E_i} \notag\\
  &\qquad = \frac{1}{c_i} \phi_\ell(a_i) \ddprb{\ONE_{Q_i} \otimes h_\ell,\sigma_i} + E_i \notag\\
  &\qquad = \frac{1}{c_i} \int_{Q_i} \phi_\ell(a_i) \dprb{h_\ell,\nu_{a_i}^\infty} \dd \lambda_{\sigma_i}(y) + E_i \notag\\
  &\qquad = \frac{1}{c_i} \int_{Q_i} \phi_\ell(a_i) \dprb{h_\ell,\nu_{a_i}^\infty} \dd (c_i T_*^{(a_i,r_i)}\lambda_\nu^s)(y) + E_i \notag\\
  &\qquad = \int_{Q_i} \phi_\ell(a_i + r_i y) \dprb{h_\ell,\nu_{a_i + r_i y}^\infty} \dd (T_*^{(a_i,r_i)}\lambda_\nu^s)(y) + E_i \notag\\
  &\qquad = \int_{Q_i(a_i,r_i)} \phi_\ell(x) \dprb{h_\ell,\nu_x^\infty} \dd \lambda_\nu^s(x) + E_i, \label{eq:main_est_sing}
\end{align}
where again the constant in $E_i$ may change from line to line. We further get by a change of variables
\[
  E_i = \SmallO(1) \biggl( \ddprn{\ONE_{Q(a_i,r_i)} \otimes \abs{\frarg},\nu} + \int_{Q_i(a_i,r_i)} \absB{r_i^{-d} c_i^{-1} \nabla v_{j(i,k)}^{(i)}\Bigl(\frac{x-a_i}{r_i}\Bigr)} \dd x \biggr),
\]
Also, we may derive in a similar fashion,
\[
  \int_{Q_i(a_i,r_i)} \absB{r_i^{-d} c_i^{-1} \nabla v_{j(i,k)}^{(i)}\Bigl(\frac{x-a_i}{r_i}\Bigr)} \dd x \leq (1+\SmallO(1))\ddprb{\ONE_{Q_i(a_i,r_i)} \otimes \abs{\frarg},\nu},
\]
hence $\norm{\nabla w_k}_{\Lrm^1(\Scal_k;\R^{m \times d})}$ is uniformly bounded. Then, also using the previous step for the regular points,~\eqref{eq:gluing_conv} in conjunction with~\eqref{eq:main_est_sing} yields for fixed $\ell$,
\begin{align*}
  &\int_\Omega \phi_\ell(x) h_\ell(\nabla w_k(x)) \dd x \\
  &\qquad = \sum_{i \colon a_i \in R} \int_{B(a_i,r_i)} \phi_\ell(x) \biggl( \dprb{h_\ell,\nu_x} + \dprb{h_\ell,\nu_x^\infty} \frac{\di \lambda_\nu}{\di \Lcal^d}(x) \biggr) \dd x  \\
  &\qquad\qquad + \sum_{i \colon a_i \in S} \int_{Q_i(a_i,r_i)} \phi_\ell(x) \dprb{h_\ell,\nu_x^\infty} \dd \lambda_\nu^s(x) \\
  &\qquad\qquad + \SmallO(1) \bigl[ \abs{\Rcal_k} + \ddprn{\ONE_{\Scal_k} \otimes \abs{\frarg},\nu} + \norm{\nabla w_k}_{\Lrm^1(\Omega;\R^{m \times d})} \bigr] \\
  &\qquad = \ddprb{\phi_\ell \otimes h_\ell,\nu} + \SmallO(1).
\end{align*}
Thus, we have arrived at
\begin{equation} \label{eq:main_conv_sing}
  \lim_{k \to \infty} \int_\Omega \phi_\ell(x) h_\ell(\nabla w_k(x)) \dd x = \ddprb{\phi_\ell \otimes h_\ell,\nu}
\end{equation}
for all $\ell \in \N$ such that $h_\ell$ is positively $1$-homogeneous.

Since in the course of the above proof we showed a uniform $\Lrm^1$-norm bound on $(\nabla w_k)$, passing to a subsequence if necessary, $(Dw_k)$ generates a gradient Young measure $\mu \in \GY(\Omega;\R^{m \times n})$. But by~\eqref{eq:main_conv_reg} and~\eqref{eq:main_conv_sing}, only $\mu = \nu$ is possible and so $\nu$ has been shown to be a gradient Young measure. \qed

We close this section by the following curious fact, first observed in~\cite{KirKri11ACR1}:

\begin{remark} \label{rem:automatic_convexity}
Condition~(ii) in Theorem~\ref{thm:GYM_characterization} is always satisfied: Since $[\nu] = Du$, it holds that $\dpr{\id,\nu_x^\infty} = \frac{\di D^s u}{\di \abs{D^s u}}(x)$ for $\abs{D^s u}$-almost every $x \in \Omega$ and by Alberti's Rank One Theorem~\cite{Albe93ROPD} this matrix has rank one $\abs{D^s u}$-a.e. The main result of~\cite{KirKri11ACR1} entails that every quasiconvex (hence rank-one convex) and positively $1$-homogeneous $g \colon \R^{m \times d} \to \R$ is in fact convex at the rank-one matrix $\dpr{\id,\nu_x^\infty}$, whereby the second condition in Theorem~\ref{thm:GYM_characterization} reduces to the classical Jensen inequality and so is always satisfied. More precisely, the result from~\cite{KirKri11ACR1} says that there exists a linear function $\ell \colon \R^{m \times d} \to \R$ such that $g(\dpr{\id,\nu_x^\infty}) = \ell(\dpr{\id,\nu_x^\infty})$ and $\ell \leq g$. Then,
\[
  g\bigl(\dprb{\id,\nu_x^\infty}\bigr) = \ell\bigl(\dprb{\id,\nu_x^\infty}\bigr) = \int \ell(A) \dd \nu_x^\infty(A) \leq \int g(A) \dd \nu_x^\infty(A) = \dprb{g,\nu_x^\infty},
\]
which is nothing else than condition~(ii) in Theorem~\ref{thm:GYM_characterization}.
\end{remark}

\section{Splitting of generating sequences}  \label{sc:splitting}

In this final section we briefly discuss an interesting application of the BV-characterization theorem to the splitting of generating sequences for Young measures with an \enquote{atomic} part. Consider a gradient Young measure $\nu \in \GY(\Omega;\R^m)$ with the property that for a given function $v \in \BV(\Omega;\R^m)$ the concentration part of $\nu$ can be split into two mutually singular parts as
\[
  \nu_x^\infty \, \lambda_\nu(\di x) = \nu_x^\infty \, \bigl[ \lambda_\nu - \abs{D^s v} \bigr](\di x) + \delta_{p(x)} \, \abs{D^s v}(\di x),
  \qquad\text{where $p(x) = \displaystyle\frac{\di D^s v}{\di \abs{D^s v}}(x)$.}
\]
More precisely, we assume:
\begin{itemize}
  \item[(i)] The measures $\lambda_\nu - \abs{D^s v}$ and $\abs{D^s v}$ are mutually singular (hence both positive),
  \item[(ii)] $\nu_x^\infty = \delta_{p(x)}$ for $\abs{D^s v}$-almost every $x \in \Omega$.
\end{itemize}
Intuitively, (i) and (ii) mean that $Dv$ is an \enquote{atomic} part of $\nu$ (the absolutely continuous part is actually uncritical, the issue is the concentration part). For reasons of simplicity let us also assume that
\begin{itemize}
  \item[(iii)] $\lambda_\nu(\partial \Omega) = 0$,
\end{itemize}
otherwise one has to embed the functions and Young measures into a larger domain and take into account boundary terms.

The natural conjecture now is that under the assumptions (i)--(iii) one can find a generating sequence $(u_j) \subset \BV(\Omega;\R^m)$ for $\nu$ of the form $u_j = w_j + v$ with $(w_j) \subset \BV(\Omega;\R^m)$ and $Dw_j \toweakstar [\nu] - Dv$, that is, we can cleave the atomic part from the generating sequence. However, when trying to prove this result, one faces the difficulty that it is easy to \emph{add} concentrations, but very difficult in general to \emph{remove} them (cf.\ Proposition~6 in~\cite{KriRin10CGGY} on shifting of Young measures). Indeed, naively setting $w_j := u_j - v$, where $Du_j$ is a generating sequence for $\nu$ will not have the desired effect if $(u_j) \subset \Crm^\infty(\Omega;\R^m)$. In fact, this procedure corresponds to shifting $\nu$ by $-Dv$ via Proposition~6 in~\cite{KriRin10CGGY} and only results in a (gradient) Young measure with the concentration part
\[
  \nu_x^\infty \, \bigl[ \lambda_\nu - \abs{D^s v} \bigr](\di x) + \delta_{p(x)} \, \abs{D^s v}(\di x) + \delta_{-p(x)} \, \abs{D^s v}(\di x),
\]
but the last two parts do not cancel. So, using direct manipulations only, it seems rather hard to cut off the atomic part from a generating sequences, while at the same time preserving the curl-freeness.

We will now use the characterization theorem in BV to prove the conjecture. Define the Young measure $\mu \in \Ybf(\Omega;\R^m)$ as follows:
\begin{align*}
  \dprb{h,\mu_x} &:= \dprb{h(\frarg - \nabla v(x),\nu_x}
    \qquad\text{for $\Lcal^d$-a.e.\ $x \in \Omega$ and all $h \in \Crm_c(\R^{m \times d})$,} \\
  \lambda_\mu &:= \lambda_\nu - \abs{D^s v} \quad  \in \Mbf^+(\cl{\Omega}), \\
  \mu_x^\infty &:= \nu_x^\infty  \qquad\text{for $\lambda_\mu$-a.e.\ $x \in \Omega$.}
\end{align*}
By our assumptions~(i)--(iii) this definition always yields a Young measure $\mu \in \Ybf(\Omega;\R^{m \times d})$ with $[\mu] = [\nu] - Dv$. In particular, (i) entails that $\lambda_\mu$ is a \emph{positive} measure.

Now, if $h \in \Crm(\R^{m \times d})$ is a quasiconvex function with linear growth at infinity, then also $\tilde{h} \colon \R^{m \times d} \to \R$ given by $\tilde{h}(A) := h(A-\nabla v(x))$ for a.e.\ $x \in \Omega$ has these properties, and from the necessity part of the characterization theorem applied to the gradient Young measure $\nu$ it follows for $\Lcal^d$-almost every $x \in \Omega$ that
\begin{align*}
  h \biggl( \dprb{\id,\mu_x} + \dprb{\id,\mu_x^\infty} \frac{\di \lambda_\mu}{\di \Lcal^d}(x) \biggr)
    &= \tilde{h} \biggl( \dprb{\id,\nu_x} + \dprb{\id,\nu_x^\infty} \frac{\di \lambda_\nu}{\di \Lcal^d}(x) \biggr) \\
  &\leq \dprb{\tilde{h},\nu_x} + \dprb{\tilde{h}^\#,\nu_x^\infty} \frac{\di \lambda_\nu}{\di \Lcal^d}(x) \\
  &= \dprb{h,\mu_x} + \dprb{h^\#,\mu_x^\infty} \frac{\di \lambda_\mu}{\di \Lcal^d}(x)
\end{align*}
since $\tilde{h}^\# = h^\#$ and $\mu_x^\infty = \nu_x^\infty$ $\Lcal^d$-almost everywhere. Moreover,
\[
  h^\# \bigl( \dprb{\id,\mu_x^\infty} \bigr) = h^\# \bigl( \dprb{\id,\nu_x^\infty} \bigr)
  \leq \dprb{h^\#,\nu_x^\infty} = \dprb{h^\#,\mu_x^\infty}
\]
for $\lambda_\mu^s$-almost every $x \in \Omega$.

The previous calculations show that $\mu$ satisfies the assumptions of the sufficiency part of the characterization theorem, whereby $\mu \in \GY(\Omega;\R^m)$. Hence, there exists a sequence $(w_j) \subset (\Crm^\infty \cap \Wrm^{1,1})(\Omega;\R^m)$ (also see Lemma~\ref{lem:boundary_adjustment}) with $Dw_j \toY \mu$. Then it is not difficult to see that the sequence $Dw_j + Dv$ indeed generates the original Young measure $\nu$, for details see Proposition~6 in~\cite{KriRin10CGGY}. Hence we have proved:

\begin{theorem}
Let $\nu \in \GY(\Omega;\R^m)$ satisfy the conditions~(i)--(iii) above. Then, there exists a sequence $(u_j) \subset \BV(\Omega;\R^m)$ for $\nu$ that can be split as
\[
  u_j = w_j + v  \qquad\text{with $(w_j) \subset (\Crm^\infty \cap \Wrm^{1,1})(\Omega;\R^m)$ and $Dw_j \toweakstar [\nu] - Dv$.}
\]
and such that $Du_j \toY \nu$.
\end{theorem}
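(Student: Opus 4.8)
The plan is to avoid direct surgery on generating sequences — which, as explained above, fails because one cannot in general cancel concentrations — and instead to route the argument through the characterization theorem, Theorem~\ref{thm:GYM_characterization}, so that the hard ``removal of concentrations'' question is traded for a routine verification of convexity inequalities.

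First I would introduce the candidate ``shifted'' Young measure $\mu \in \Ybf(\Omega;\R^{m \times d})$ obtained from $\nu$ by subtracting $\nabla v$ from the oscillation part, subtracting $\abs{D^s v}$ from the concentration measure, and keeping the concentration-angle measure: $\dpr{h,\mu_x} := \dpr{h(\frarg - \nabla v(x)),\nu_x}$ for $\Lcal^d$-a.e.\ $x$ and $h \in \Crm_c(\R^{m \times d})$, $\lambda_\mu := \lambda_\nu - \abs{D^s v}$, and $\mu_x^\infty := \nu_x^\infty$ for $\lambda_\mu$-a.e.\ $x$. The first point to check is that this is a genuine element of $\Ybf(\Omega;\R^{m \times d})$; the only non-trivial requirement is that $\lambda_\mu$ be a \emph{positive} measure, which is exactly where hypothesis~(i) enters (mutual singularity of $\lambda_\nu - \abs{D^s v}$ and $\abs{D^s v}$ forces $\abs{D^s v} \leq \lambda_\nu$). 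One also records $[\mu] = [\nu] - Dv$, and in particular $[\mu] = Dw$ for $w := $ any $\BV$-primitive of $[\nu]-Dv$, so the structural hypothesis of Theorem~\ref{thm:GYM_characterization} is met.

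Next I would verify the two Jensen-type inequalities of Theorem~\ref{thm:GYM_characterization} for $\mu$. The trick is that for a quasiconvex $h \in \Crm(\R^{m \times d})$ with linear growth, the shifted function $\tilde h(A) := h(A - \nabla v(x))$ (for a.e.\ fixed $x$) is again quasiconvex with the same linear growth constant and the same generalized recession function $\tilde h^\# = h^\#$. Applying the \emph{necessity} direction of the characterization (available via Theorem~5.4 of~\cite{Rind12LSYM}) to the gradient Young measure $\nu$ with integrand $\tilde h$, and then unwinding the definitions of $\mu_x, \lambda_\mu, \mu_x^\infty$, yields inequality~(i) for $\mu$. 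Inequality~(ii) is immediate because it involves only the recession data, which is literally unchanged ($\mu_x^\infty = \nu_x^\infty$, $\tilde h^\# = h^\#$). With the hypotheses confirmed, the \emph{sufficiency} direction — Proposition~\ref{prop:GYM_sufficient}, the main content of this paper — gives $\mu \in \GY(\Omega;\R^m)$, and Lemma~\ref{lem:boundary_adjustment} furnishes a generating sequence $(w_j) \subset (\Crm^\infty \cap \Wrm^{1,1})(\Omega;\R^m)$ with $Dw_j \toY \mu$; in particular $Dw_j \toweakstar [\mu] = [\nu]-Dv$.

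Finally I would set $u_j := w_j + v$ and check $Du_j \toY \nu$. This is the ``easy'' direction of \emph{adding} a fixed concentration and is carried out exactly by the shifting principle of Proposition~6 in~\cite{KriRin10CGGY}: testing against $f \in \Ebf(\Omega;\R^{m \times d})$, the absolutely continuous parts recombine as $\nabla w_j + \nabla v$ and produce $\dpr{f(x,\frarg),\nu_x}$ by the definition of $\mu_x$; the diffuse concentration generated through $\lambda_\mu$ recombines with the atomic concentration $\delta_{p(x)}\abs{D^s v}$ — using hypothesis~(ii) — to reproduce $\lambda_\nu$ with angle measure $\nu_x^\infty$. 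The main obstacle is conceptual rather than computational: recognizing that the problem must be lifted to the Young-measure level at all. Once that is accepted, the genuinely delicate technical point is the well-definedness of $\mu$ (that hypothesis~(i) guarantees $\lambda_\mu \geq 0$), with the remaining steps being bookkeeping and invocations of results already established.
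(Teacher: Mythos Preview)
Your proposal is correct and follows essentially the same approach as the paper: define the shifted Young measure $\mu$ exactly as you do, verify the two Jensen-type inequalities for $\mu$ by applying the necessity direction of Theorem~\ref{thm:GYM_characterization} to $\nu$ with the translated integrand $\tilde h(A) = h(A-\nabla v(x))$ (noting $\tilde h^\# = h^\#$), invoke the sufficiency direction to conclude $\mu \in \GY(\Omega;\R^m)$, extract a smooth generating sequence $(w_j)$ via Lemma~\ref{lem:boundary_adjustment}, and finally set $u_j := w_j + v$ and appeal to the shifting principle of Proposition~6 in~\cite{KriRin10CGGY}. The logical structure, the key constructions, and the cited tools all match the paper's argument in Section~\ref{sc:splitting}.
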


We conclude with remarks about the situation where $\lambda_\nu - \abs{D^s v}$ and $\abs{D^s v}$ are not mutually singular and the concentration effects may interfere (for example in the common situation that $v = u$, where $[\nu] = Du$). In this case we need to additionally \emph{require} that the concentration part of $\nu$ can be written in the form
\[
  \nu_x^\infty \, \lambda_\nu(\di x) = \mu_x^\infty \, \lambda_\mu(\di x) + \delta_{p(x)} \, \abs{D^s v}(\di x),
  \qquad p(x) = \frac{\di D^s v}{\di \abs{D^s v}}(x),
\]
for another family $(\mu_x^\infty)_{x \in \Omega} \subset \Mbf^+(\partial \Bbb^{m \times d})$ of \emph{positive} (sub-probability) measures and a \emph{positive} measure $\lambda_\mu \in \Mbf^+(\cl{\Omega})$ (notice that these positivity properties are not automatic anymore). Moreover, absorbing an appropriate factor into $\lambda_\mu$, it can always be assumed that $\mu^\infty_x$ is a probability measure. The regular Jensen-type inequality is proved exactly as before and by the main result of~\cite{KirKri11ACR1}, the singular Jensen-type inequality is always satisfied, cf.\ Remark~\ref{rem:automatic_convexity}, so we can again employ the characterization result to conclude.

%
%% GENERATED BIBLIOGRAPHY
%
%\bibliography{../../Bib/math_bib,../../MyPubs/mypubs}
%\bibliographystyle{amsplain}

% AUTO-GENERATED BIBLIOGRAPHY:

\providecommand{\bysame}{\leavevmode\hbox to3em{\hrulefill}\thinspace}
\providecommand{\MR}{\relax\ifhmode\unskip\space\fi MR }
% \MRhref is called by the amsart/book/proc definition of \MR.
\providecommand{\MRhref}[2]{%
  \href{http://www.ams.org/mathscinet-getitem?mr=#1}{#2}
}
\providecommand{\href}[2]{#2}

\end{document}